\newcommand{\Amax}{A_{{\sf max}}}
\newcommand{\A}{{\sf a}}
\newcommand{\B}{{\sf b}}
\newcommand{\E}{{\sf e}}
\newcommand{\V}{{\sf v}}
\newcommand{\U}{{\sf u}}
\newcommand{\W}{{\sf w}}
\newcommand{\X}{{\sf x}}
\newcommand{\Y}{{\sf y}}
\newcommand{\M}{\mathfrak{m}}
\newcommand{\uglyH}{\mathfrak{h}}
\newcommand{\Z}{\mathbb{Z}}
\newcommand{\N}{\mathbb{N}}
\newcommand{\R}{\mathbb{R}}
\newcommand{\Q}{\mathbb{Q}}
\newcommand{\trace}{{\sf trace}}
\newcommand{\HausD}{d_{{\sf Haus}}}
\newcommand{\zero}{{\sf 0}}
\newcommand{\z}{{\mathbf z}}
\newcommand{\I}{\mathbf I}
\newcommand{\std}{{\sf std}}
\newcommand{\CC}{{\sc cc}~}
\newcommand{\dCC}{d_{\hbox{\tiny \CC}}\!}
\newcommand{\sdot}{\! \cdot \!}
\newcommand{\eS}{\mathcal{S}}
\newcommand{\eN}{\mathcal{N}}
\newcommand{\NN}{{\sf N}}
\newcommand{\G}{\mathcal{G}}
\DeclareMathOperator{\vol}{vol}
\DeclareMathOperator{\diam}{diam}
\DeclareMathOperator{\panel}{{\sf Panel}}
\DeclareMathOperator{\Sreg}{\eS_{\sf reg}}
\DeclareMathOperator{\Suns}{\eS_{\sf uns}}
\newcommand{\Sreghat}{{\widehat{\eS}_{\sf reg}}}
\newcommand{\Sunshat}{{\widehat{\eS}_{\sf uns}}}
\newcommand{\Foot}{{\sf Foot}}
\newcommand{\VS}{V}
\newcommand{\Vreg}{V_{\hbox{\small \sf reg}}}
\newcommand{\Vuns}{V_{\hbox{\small \sf uns}}}
\DeclareMathOperator{\Prob}{Prob}
\newcommand{\muL}{\mu\raisebox{-3pt}{\scriptsize{\ensuremath L}}}
\newcommand{\normL}[1]{\| #1 \| \raisebox{-3pt}{\scriptsize{\ensuremath L}}}
\newtheorem{Thm}{Theorem}
\newtheorem{prop}[Thm]{Proposition}
\newtheorem{lemma}[Thm]{Lemma}
\newtheorem{defn}[Thm]{Definition}
\newtheorem*{struc-thm}{Structure Theorem}
\newtheorem*{instab-thm}{Instability Theorem}
\newtheorem*{counting-thm}{Counting Theorem}
\newtheorem*{track-lem}{Tracking Lemma}
\theoremstyle{definition}
\newtheorem{remark}[Thm]{Remark}
\newtheorem{example}[Thm]{Example}
\begin{document}

\title{Fine asymptotic geometry in the Heisenberg group}
\author[Duchin]{Moon Duchin}
\address{Department of Mathematics\\  
530 Church Street\\
Ann Arbor, MI 48109-1043
}
\email{mduchin@umich.edu}
\urladdr{http://www.math.lsa.umich.edu/$\sim$mduchin/}

\author[Mooney]{Christopher Mooney}
\address{Department of Mathematics\\  
530 Church Street\\
Ann Arbor, MI 48109-1043
}
\email{cpmooney@umich.edu}
\urladdr{http://www.math.lsa.umich.edu/$\sim$cpmooney/}

\thanks{The first author was partially supported by NSF DMS-0906086. 
The second author was partially supported by NSF RTG-0602191.}

\date{\today}
\begin{abstract} 
For every finite generating set on the integer Heisenberg group $H(\Z)$, 
Pansu showed that 
the word  metric has the large-scale structure of a Carnot-Carath\'eodory Finsler metric on the real 
Heisenberg group $H(\R)$.  
We study the properties of those limit metrics and obtain results about the geometry of 
word metrics that reflect the dependence on generators.

For example we will study the probability that a group element has all of its geodesic spellings
sublinearly close together, relative to word length.  In free abelian groups of rank 
at least two, that probability is 0; in unbounded hyperbolic groups, the probability is 1.  
In $H(\Z)$ it is a rational number strictly between 0 and 1 that depends on the generating set; with respect
to the standard generators, the probability is precisely $19/31$.
\end{abstract}
\maketitle

%%%%%%%%%
\section{Introduction}

In this paper we will focus on the 3-dimensional integer Heisenberg group $H(\Z)$, a uniform lattice
in the real Heisenberg group $H(\R)$, with the word metric coming from a finite generating set.  
We will study large-scale
geometric properties that are  sensitive to the choice of generators, such as the shape 
and stability of long geodesics, and asymptotic density of  classes of elements.  

We will take an approach based on work of Pansu and Breuillard \cite{pansu,breuillard} exploiting the fact 
 that all word metrics on $H(\Z)$ 
 have the large-scale structure of Carnot-Carath\'e\-odory Finsler metrics
on $H(\R)$; that is, 
the Lie group with its \CC metric is the asymptotic cone of the lattice with
the chosen word metric.  
However, we will study large-scale geometric properties  that are lost when one 
passes to the asymptotic cone.  (Note that studying the Heisenberg group 
up to quasi-isometry is even coarser, identifying all
word metrics as well as all left-invariant metrics on $H(\R)$ with one another.)
Thus we develop technology for working with these word metrics that uses
the homogeneous dilation at finite scale together with combinatorial arguments, 
instead of the usual tools of coarse geometry.  We think of this
set of questions and techniques as belonging to  ``fine asymptotic geometry."

The geometry of nilpotent groups is closely tied to geometric minimax problems,
so Section~\ref{isoper} is devoted to the solution of relevant isoperimetric problems in 
normed planes.
In Section~\ref{structure}, we apply those findings,
giving a very explicit description of the limit metric associated to an arbitrary word metric.
The main result of this section 
is Theorem~\ref{limitshapebypanels}, which is summarized here:

\begin{struc-thm}
For any finite symmetric generating set $S$ of $H(\Z)$, the limiting \CC metric admits a complete
description of its geodesics, classified into 
no more than $|S|^2-2|S|$ combinatorial types of {\em regular geodesics} and no more than 
$|S|$ types of {\em unstable geodesics}.  The unit sphere $\eS$ in the \CC metric is a piecewise union
of the graphs of quadratic polynomials over finitely many quadrilateral regions in the $xy$-plane.
\end{struc-thm}

As a corollary of this description, we can analyze the uniqueness of geodesics in these 
limit metrics, describing exactly which points $\X\in H(\R)$ are reached by more than one 
geodesic segment based at $\zero$.

We use this structure theorem 
in Section~\ref{instability} to understand the geodesics in Cayley graphs for $H(\Z)$, 
showing that word geodesics may be very unruly, but they are 
 tracked to within a controlled distance  by much better-behaved \CC geodesics:
the Hausdorff distance is of lower-order growth than the length of the word.  (See
Lemma~\ref{tracking}.)

\begin{track-lem}
Away from a certain unstable locus of endpoints,  word geodesics
are sublinearly tracked by \CC geodesics.
\end{track-lem}

This opens the door to the study of geometric probability in the discrete Heisenberg groups,
and we give an illustrative application in Theorem~\ref{instab},
 quantifying the instability of word geodesics.  
Elements of a group may be said to be geodesically stable
with respect to a generating set if, in the Cayley graph, the Hausdorff
distance between the geodesic spellings of the
word can only differ from each other sublinearly in the length of
the word.  When this is made precise, one can see that stable
elements have zero density in 
 free abelian groups, but full density in infinite hyperbolic groups.  The situation is different
 in the Heisenberg group.

\begin{instab-thm}
The asymptotic density of geodesically stable elements of $H(\Z)$ is a rational number
strictly between $0$ and $1$, depending on the generating set.
\end{instab-thm}

Then in Section~\ref{counting}, we apply the  structure theorem to 
a generalized Gauss circle problem:  
 Theorem~\ref{countingthm} asserts that in every polygonal \CC metric---a 
class that includes all limits of word metrics---we can count lattice
points to first order in the annular shell between balls.  

\begin{counting-thm}
For any polygonal \CC metric, the number of lattice points between the spheres of radius
$n$ and $n-1$ is equal to $4Vn^3+O(n^2)$, where $V$ is the volume of the \CC unit ball.
\end{counting-thm}

Note that this is strictly better than what is logically implied by the best known estimate
of the number of lattice points in the ball of radius $n$, which is $Vn^4+O(n^3)$ by \cite{stoll2}.
(In fact, the full theorem is more precise than what is stated above:  we can
count points in the annulus in any radial direction.)

Finally, we show in Section~\ref{std-sec} that  for $S=\std$, the cardinality of the 
discrete sphere $S_n$ agrees with this annular lattice point count  to first order.
The counting measure on discrete spheres is thus shown to limit to a cone measure on 
the limit shape $\eS$, for which we have a concrete and finite description.    
This enables spherical averaging---sharper than asymptotic density calculations,
which are averages over balls---with respect
to the standard generators.

%Some of our results generalize straightforwardly 
%to other two-step nilpotent groups, but we restrict the exposition to this most simple case.

%%%
\subsection{Nilpotent groups and the Heisenberg group}
We review a few facts about the sub-Riemannian geometry of the Heisenberg group;
see ~\cite{capogna} for a comprehensive reference.

First, we consider the 3-dimensional integer Heisenberg group $H(\Z)$, the
subgroup of matrices with integer entries  in  $H(\R)$ (the real Heisenberg group).  
Let $\M$ denote the horizontal subspace of the Lie algebra
$\uglyH$ of $H(\R)$; that is, the span of the tangent vectors 
$X_0=\left( \begin{smallmatrix}
	0&1 &0  \\
	0 &0&0\\
	0&0&0
 \end{smallmatrix} \right)$
and 
$Y_0=\left( \begin{smallmatrix}
	0&0&0  \\
	0 &0&1\\
	0&0&0
 \end{smallmatrix} \right)$.  There are {\em horizontal planes} at every point in $H(\R)$, 
 generated by pushing 
around $X_0$ and $Y_0$ by left multiplication in $H(\R)$ to produce left-invariant vector fields $X$ and $Y$.  
We say that a curve  in $H(\R)$ is {\em admissible} if all of its tangent vectors lie in these
horizontal planes;
it is easily verified that the curve $\gamma=(\gamma_1,\gamma_2,\gamma_3)$ is admissible if and only if
$$\gamma_3' = \frac 12 (\gamma_1\gamma_2' - \gamma_2\gamma_1').$$

We will use the {\em exponential coordinates} on $H$ given by the following representation:
$$(x,y,z) \leftrightarrow \begin{pmatrix}
	1&x&z + \frac 12 xy  \\
	0 &1&y\\
	0&0&1
 \end{pmatrix} .$$
These coordinates have the property that $(x,y,z)^n=(nx,ny,nz)$.

For integers $x$ and $y$, define $\epsilon(x,y)$ to be $1/2$ if $x$ and $y$ are both odd, and $0$ otherwise.
In these coordinates, $H(\Z)$ looks just like the standard lattice $\Z^3\subset \R^3$ shifted by $\epsilon$ in the $z$ direction.

We can regard $\M$ as a copy of $\R^2$ and 
make use of the  linear projection $\pi: H(\R) \to \M$ given by $(x,y,z)\mapsto (x,y)$.

\begin{lemma}[Standard fact from Heisenberg geometry]
\label{balayage}
For any path $\gamma=(\gamma_1,\gamma_2)$ from 
$(0,0)$ to $(x,y)$, there is a unique admissible curve $\overline\gamma=(\gamma_1,\gamma_2,\gamma_3)$. 
The lifted curve connects the origin to the point $(x,y,z)$, where
$z$  is equal to the signed Euclidean area of $R$, the region in the plane
enclosed by $\gamma$ and a straight chord from
$(0,0)$ to $(x,y)$.
\end{lemma}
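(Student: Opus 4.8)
The plan is to read the admissibility condition as a first-order ordinary differential equation for the missing third coordinate, integrate it to get existence and uniqueness, and then recognize the resulting terminal integral as a line integral that Green's theorem converts into a signed area.

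First I would settle existence and uniqueness of the lift. With the planar path $\gamma=(\gamma_1,\gamma_2)$ fixed, a lift $\overline\gamma=(\gamma_1,\gamma_2,\gamma_3)$ based at the origin is admissible precisely when $\gamma_3'=\tfrac12(\gamma_1\gamma_2'-\gamma_2\gamma_1')$ subject to $\gamma_3(0)=0$. Since the right-hand side is a known function of the parameter once $\gamma_1,\gamma_2$ are given, the fundamental theorem of calculus produces the one and only solution
$$\gamma_3(t)=\frac12\int_0^t\bigl(\gamma_1\gamma_2'-\gamma_2\gamma_1'\bigr)\,ds.$$
Thus there is exactly one admissible lift starting at $\zero$, and its first two coordinates are still $(\gamma_1,\gamma_2)$, so it terminates over $(x,y)$.

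Next I would identify the terminal height. Writing $\omega=\tfrac12(u\,dv-v\,du)$ for the standard area form on $\M\cong\R^2$ with coordinates $(u,v)$, the endpoint value is the line integral $z=\gamma_3(1)=\int_\gamma\omega$. The key observation is that $\omega$ vanishes identically along every ray through the origin: parametrizing the straight chord joining $(x,y)$ to $(0,0)$ by $s\mapsto(sx,sy)$ gives $u\,dv-v\,du=(sx)(y\,ds)-(sy)(x\,ds)=0$. Hence adjoining this chord to $\gamma$ contributes nothing, and I may replace $\int_\gamma\omega$ by $\oint_{\partial R}\omega$ over the closed loop $\partial R$ formed by $\gamma$ followed by the chord. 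Since $d\omega=du\,dv$, Green's theorem gives $\oint_{\partial R}\omega=\vol(R)$, the signed Euclidean area of the enclosed region, which is exactly the claimed value of $z$.

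The only real subtlety — and the step I would treat most carefully — is that $\gamma$ need not be simple. When $\gamma$ self-intersects, the phrase ``region enclosed by $\gamma$ and the chord'' must be read with signed multiplicity, i.e.\ weighted by winding number; one should confirm that both Green's theorem and the term ``signed Euclidean area'' are understood in this weighted sense, and then no discrepancy arises, since the line integral $\int_\gamma\omega$ computes precisely that weighted area. I would also remark that piecewise $C^1$ regularity of $\gamma$ (all that is needed to discuss geodesics) suffices to justify the integration by parts implicit in Green's theorem.
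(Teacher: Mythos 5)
Your proposal is correct and follows essentially the same route as the paper's own proof: integrate the admissibility equation to construct the unique lift, observe that $\gamma_1\gamma_2'-\gamma_2\gamma_1'$ vanishes along the radial chord so it can be adjoined for free, and apply Green's theorem to convert the closed line integral into the signed area of $R$. If anything, you are slightly more careful than the paper, which drops the factor of $\tfrac12$ in its displayed equation and does not comment on non-simple curves, where your winding-number reading of ``signed area'' is the right one.
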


\begin{proof}
To construct the lift, we just need to supply the coordinate function $\gamma_3$, which can be done
by integrating in the defining formula for admissibility.
Note that $x(t)y'(t)-y(t)x'(t)$ 
is identically zero along a linear parametrization of the straight chord
between the endpoints, so we can concatenate that chord with the path in the plane to obtain the closed
curve $\partial R$.  We get
$$z= \int_{\partial R} \gamma_1\gamma_2' - \gamma_2\gamma_1' = \int_R dx\wedge dy,$$
which is the area of $R$.
\end{proof}

From now on the area of this region $R$ will be called the {\em balayage area} associated
to a curve $\gamma$.

Choose a centrally symmetric convex body  $Q\subset\M$ with boundary $L$
and let $\normL\cdot$ denote the corresponding  norm; this is the norm having $Q$ as its unit ball
and $L$ as its unit sphere.  We will use the notation
$\M_L=(\M,\normL\cdot)$ for this normed plane.
This induces a Carnot-Carath\'eodory Finsler metric (from now on,  \CC metric)
on the Lie group:  the distance between two points
is the infimal length of an admissible path between them, measured by using the norm on the tangent 
vectors.
(This is well defined for admissible curves because the tangent vectors lie in copies of
$\M$ pushed around by multiplication in $H(\R)$, and it is a classical fact that this gives $H(\R)$ the structure
of a geodesic space.)
Note that measuring the length of an admissible curve in $H(\R)$ with respect to this \CC metric
is equivalent to measuring the length of its projection to $\M$ with respect to the norm
$\normL{\cdot}$.  If $L$ is a polygon, we will call the induced metric a {\em polygonal \CC metric}
on $H(\R)$.  (In arbitrary dimension, if $L$  is a polytope, the associated norms are sometimes
called {\em crystalline}.)

These \CC metrics are equipped with a dilation $\delta_t(x,y,z)=(tx,ty,t^2z)$ that scales
lengths and distances by $t$, and areas in $\M$ by $t^2$.  For any set $E\subset H(R)$, let
$\Delta E$ denote its full cone under dilation, and let $\hat E$ denote its dilation-cone to the origin:
$$\Delta E = \{ \delta_t(\X) : \X\in E, t\ge 0\} \quad ; 
\qquad \hat E = \{ \delta_t(\X) : \X\in E, 0\le t\le 1\} .$$ 

As a consequence of the connection between height and balayage area, we have a criterion for geodesity
in the \CC metric:
a curve $\gamma$ in $\M$ based at $(0,0)$ lifts to a geodesic in $H(\R)$
iff its $L$-length is minimal among all curves with the same endpoints and enclosing the same area.

We will consider geometric probability, or  {\em asymptotic density},
in the Heisenberg groups; for
$U\subset H(\Z)$ and $V\subset H(\R)$, let $B_t=B_t(\zero)$ be the ball of radius $t$ about the 
origin.  Then
\[
	\Prob(U):= \lim_{n\to\infty}
		\frac{|B_n \cap U|}{|B_n|} \quad ; \qquad 
	\Prob(V):=\lim_{r\to\infty}
		\frac{\vol(B_r\cap V)}{\vol(B_r)}.
\]
Note that Lebesgue measure has the right invariance properties
to coincide with Haar measure on $H(\R)$  up to a scalar, so we
will be able to talk about ratios of volumes unambiguously.  

%%%
\subsection{Limit shapes, limit metrics,  and limit measure}

We have the following extremely general theorem, first proven for nilpotent groups by Pansu,
and extended to all periodic pseudometrics on 
simply connected solvable Lie groups of polynomial growth by Breuillard
\cite{breuillard}.
Here we just state it for the very special case of word metrics on $H(\Z)\le H(\R)$.  We will always assume
that generating sets are symmetric ($S=-S$).

\begin{Thm}[Pansu \cite{pansu}]
\label{pansu-thm}
Consider  a word metric on $H(\Z)$ given by a finite generating set $S$.
Let $\pi(S)$ be the linear projection of the generators $S$ to the horizontal subspace $\M$,  let 
$Q$ be the convex hull of $\pi(S)$ and $L$ its boundary polygon.
Then the limit $$\eS = \lim_{n\to\infty}  \delta_{\frac 1n}S_n \ ,$$
exists (as a Gromov-Hausdorff limit, say) and is equal to the unit sphere in the \CC metric induced by 
the norm $\normL{\cdot}$ on $\M$.

Equivalently, $$\lim_{\X\to\infty} \frac{|\X|_S}{\dCC(\X,\zero)}=1.$$
\end{Thm}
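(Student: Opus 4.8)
The plan is to prove the scalar comparison $\lim_{\X\to\infty}|\X|_S/\dCC(\X,\zero)=1$ and to deduce the limit-shape statement from it. This deduction is routine: the dilation is homogeneous for the \CC metric, $\dCC(\delta_t\X,\zero)=t\,\dCC(\X,\zero)$, so the \CC unit sphere is exactly $\{\X:\dCC(\X,\zero)=1\}$; a point $\X\in S_n$ satisfies $|\X|_S=n$, whence $\dCC(\delta_{1/n}\X,\zero)=\dCC(\X,\zero)/n\to1$ by the comparison, so $\delta_{1/n}S_n$ accumulates on the \CC unit sphere, and a matching density argument shows every point of the sphere is approached. Everything therefore reduces to sandwiching $|\X|_S$ between $(1\pm o(1))\dCC(\X,\zero)$.

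For the lower bound on $|\X|_S$ (equivalently an upper bound on $\dCC$), I would begin with a geodesic word $s_1\cdots s_k$ for $\X$, with $k=|\X|_S$, and project it through $\pi$ to a polygonal path in $\M$ whose edges are the vectors $\pi(s_i)\in Q$. Because each edge lies in the unit ball, the path has $L$-length at most $k$. The group law shows that $z$ equals the balayage area of this path plus the correction $\sum_i c_i$, where $c_i$ is the central coordinate of $s_i$; as the generating set is fixed, $\sum_i c_i=O(k)$. By the geodesity criterion the projected path is a competitor for the point $(x,y,z-\sum_i c_i)$, so $\dCC\big((x,y,z-\sum_i c_i),\zero\big)\le k$. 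Shifting the height back by $\sum_i c_i$ changes the distance by only $O(\sqrt{k})$, since any height gap $\Delta z$ can be opened or closed by appending a loop of $L$-length $O(\sqrt{|\Delta z|})$; hence $\dCC(\X,\zero)\le|\X|_S+O(\sqrt{|\X|_S})$.

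The reverse inequality is the substantive part. Here I would take a \CC geodesic from $\zero$ to $\X$ and project it to a minimizing planar curve $\gamma$ of $L$-length $\dCC(\X,\zero)$ enclosing signed area $z$, then realize a nearby path by generators. Because $L$ is precisely the boundary of the convex hull of $\pi(S)$, every direction in $\M$ is a nonnegative combination of two adjacent vertices of $Q$, i.e.\ of two projected generators; following $\gamma$ by stepping along these generators produces a lattice word whose length exceeds the $L$-length of $\gamma$ by a controlled amount and which ends near $(x,y)$ with some height $z'$. One then corrects the small horizontal discrepancy and the height gap $z-z'$ by appending short moves and a single area-filling loop, at a cost governed again by the isoperimetric estimate $O(\sqrt{\,\cdot\,})$ in the normed plane.

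The main obstacle is to make the overhead in this construction genuinely sublinear, uniformly as $\X\to\infty$ in every direction. Two effects must be controlled simultaneously: the accumulated rounding error in $L$-length from discretizing $\gamma$ into generator steps, and the accumulated balayage-area error $z-z'$ coming from the fact that the lattice path only tracks $\gamma$ to within bounded Hausdorff distance over a length $O(n)$, which makes the area error $O(n)$ and hence the repairing loop of $L$-length $O(\sqrt n)=o(n)$. The delicate point is to choose the coarseness of the discretization so that the number of segments is $o(n)$, keeping the summed rounding error sublinear, while the segments are still fine enough that the planar approximation stays close to $\gamma$; verifying that these competing requirements can be met with bounds independent of the direction of $\X$ is where the real work lies. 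Together with the lower bound, this yields $|\X|_S\le\dCC(\X,\zero)(1+o(1))$ and completes the comparison.
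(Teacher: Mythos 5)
First, a point of comparison: the paper does not prove this statement at all --- it is quoted as Pansu's theorem, with \cite{pansu,breuillard,krat} cited for proofs --- so your attempt can only be measured against the standard arguments and against the paper's adjacent machinery. Your easy direction is essentially identical to what the paper does prove later in Lemma~\ref{spellings}: the equation $z(\X)=z(\Y)+\sum z(\A_i)$ (the paper's equation $(\dagger)$), the bound $\sum_i c_i = O(k)$, and the repair of a height gap $\Delta z$ at cost $O(\sqrt{|\Delta z|})$ are all exactly the paper's computation, and that part of your proposal is sound. The deduction of the Gromov--Hausdorff statement from the scalar limit is glossed (``a matching density argument'') but is indeed routine.

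The genuine gap is in the substantive direction, and you name it yourself: you never verify that the discretization overhead can be made sublinear uniformly in the direction of $\X$, and as you have framed it --- discretizing an \emph{arbitrary} rectifiable minimizer $\gamma$, balancing segment count against tracking fidelity --- this balancing act is precisely the content of the theorem, not a technicality. The way out is to not discretize an arbitrary geodesic. In a polygonal normed plane the Dido problem is solved by arcs of the isoperimetrix (Busemann's theorem, developed in Section~\ref{isoper} of the paper), so one may \emph{choose} the \CC geodesic to $\X$ so that its projection is a trace path or a piecewise-linear beeline: piecewise linear with a uniformly bounded number of segments (at most $2N+2$ for a $2N$-gon $Q$), each segment parallel to a vertex of $Q$, i.e.\ to $\pi(\A)$ for a single generator $\A$. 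Rounding each segment to an integer number of steps of that one generator then costs $O(1)$ in length and $O(1)$ in endpoint error \emph{in total}, not per unit length; the resulting lattice path stays within $O(1)$ Hausdorff distance of $\gamma$, so the balayage-area discrepancy plus the $\sum_i c_i$ correction is $O(n)$, repaired by a lattice loop of word length $O(\sqrt{n})$ (using $|(0,0,m)|_S=O(\sqrt{m})$, a commutator estimate, rather than a \CC isoperimetric loop, since the repair must be a word in $S$). With this substitution there are no competing requirements to optimize, the bound $|\X|_S \le \dCC(\X,\zero)+O(\sqrt{\dCC(\X,\zero)})$ is uniform in direction, and in fact your argument then proves the stronger additive comparison of Krat (Theorem~\ref{bdd-diff}) rather than only the ratio statement.
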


\begin{Thm}[Krat \cite{krat}]\label{bdd-diff}
Furthermore, there exists a constant $K=K(S)$ such that
$$\dCC(\X,\zero) -K \  \le \   |\X|_S \  \le \  \dCC(\X,\zero) +K$$
for all $\X\in H(\Z)$.
\end{Thm}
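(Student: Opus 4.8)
The plan is to prove the two inequalities separately, in each case translating between a word and a path in the plane $\M$ through the balayage dictionary of Lemma~\ref{balayage}: a spelling $s_{i_1}\cdots s_{i_\ell}$ projects under $\pi$ to a lattice path assembled from the steps $\pi(s_{i_j})\in\pi(S)$, and the group element it represents is $(x,y,z)$ with $(x,y)=\sum_j \pi(s_{i_j})$ and $z=A+C$, where $A$ is the balayage area swept out by the projected path and $C=\sum_j c_{i_j}$ is the sum of the central ($z$-)components of the generators used. Throughout I will use the geodesity criterion recorded above, that $\dCC(\X,\zero)$ equals the minimal $\normL{\cdot}$-length of a path in $\M$ from $(0,0)$ to $(x,y)$ enclosing signed area $z$, together with the elementary isoperimetric fact from Section~\ref{isoper} that altering the enclosed area of a path by an amount $\alpha$ costs $O(\sqrt{|\alpha|})$ in $\normL{\cdot}$-length, and no less.

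For the upper bound $|\X|_S \le \dCC(\X,\zero)+K$, I would start from a \CC geodesic to $\X$; its projection is a polygonal path in $\M_L$ whose edges point in extreme directions of $Q$, that is toward the lattice vertices $\pi(S)$, and which (by the Structure Theorem, or directly from the polygonal isoperimetric analysis of Section~\ref{isoper}) has only $O(1)$ edges. Rounding each edge to an integer number of copies of the corresponding generator changes the $\normL{\cdot}$-length by $O(1)$ and moves the endpoint by a lattice vector of size $O(1)$, which I repair with $O(1)$ extra generators. This yields a \emph{multiset} of generators realizing $(x,y)$ with total word length $\le \dCC(\X,\zero)+O(1)$. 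The crucial point is that the target height can now be hit \emph{for free}: reordering the multiset leaves the word length unchanged, while a single adjacent transposition of steps with projections $\V,\W$ changes the balayage area $A$ by $\tfrac12(\V\times\W)$, a bounded half-integer. Since all orderings are joined by adjacent transpositions and the extreme orderings differ in area by $\Theta(\ell^2)$, the achievable values of $A$ fill an arithmetic progression across an interval of quadratic width; as the required value $z-C$ lies well inside this interval, I can reorder so that $A+C=z$ exactly (or to within the $O(1)$ granularity, absorbing the remainder into an $O(1)$ corrective loop). This produces a spelling of $\X$ of the desired length.

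The lower bound $\dCC(\X,\zero)\le |\X|_S+K$ is where the genuine difficulty lies. Taking a geodesic spelling of length $\ell=|\X|_S$, its projected path has $\normL{\cdot}$-length $\le \ell$ and reaches $(x,y)$, but the admissible lift of that path attains height $A=z-C$ rather than $z$, so it is a \CC competitor \emph{to the wrong point}. Correcting the enclosed area from $A$ to $z$ costs $O(\sqrt{|C|})$, yielding only $\dCC(\X,\zero)\le |\X|_S+O(\sqrt{|C|})$. The whole content of the theorem is thus the upgrade from $O(\sqrt{\cdot})$ to $O(1)$, and I expect this to be the main obstacle. It reduces to the claim that a length-minimizing word has bounded central contribution, $|C|=O(1)$, with the bound depending on $S$. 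I would establish this by an exchange argument: a generator with nonzero central part buys height at a fixed linear rate per unit length, whereas at scale $\ell$ the area mechanism buys height at rate $\Theta(\ell)$ per unit length, so a word that accrues more than $O(1)$ net height through central components can be strictly shortened by trading that accrual for an area detour, contradicting minimality. When every generator is horizontal, as for $S=\std$ where each $c_i=0$, one has $C=0$ and the lower bound is immediate; the general case is precisely the task of controlling $|C|$. Combining the two bounds produces the constant $K=K(S)$.
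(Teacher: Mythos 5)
Both halves of your argument have a genuine gap, and it is essentially the same gap. Consider first your treatment of $\dCC(\X,\zero)\le|\X|_S+K$: you reduce it to the claim that a length-minimizing word has bounded central contribution $|C|=O(1)$, but that claim is false. Take $S=\pm\{(1,0,5),(0,1,0)\}$, which generates $H(\Z)$ (the commutator of the two listed generators is $(0,0,1)$) and has the same footprint $Q$ as the standard generators. Every generator projects into $Q$, so any word reaching the fiber over $(n,0)$ has length at least $\normL{(n,0)}=n$; hence for $\A=(1,0,5)$ the word $\A^n$, which spells $(n,0,5n)$, is geodesic and has $C=5n$. (In fact \emph{every} word over this fiber has $C=5n$, since each horizontal step carries height $\pm5$ with the sign of its $x$-displacement.) Your exchange argument breaks down exactly here: over a vertex direction of $Q$ the ``area mechanism'' has zero capacity at geodesic length---every path of $L$-length $n$ to $(n,0)$ is straight and encloses no area---so central accrual cannot be traded for area, and should not be, since the word really is geodesic. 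Relatedly, your assertion that changing enclosed area by $\alpha$ costs at least on the order of $\sqrt{|\alpha|}$ is wrong: over a base of length $n$ it costs only $O(|\alpha|/n+1)$, e.g.\ $\dCC\bigl((n,0,5n),\zero\bigr)\le n+10$ via a rectangle of height $5$. What actually rescues this half---and it is the idea the paper itself sketches in the Remark on comparing metrics after Theorem~\ref{limitshapebypanels}; note the paper cites Krat for the theorem and proves neither half in full---is that $|C|$ grows at most linearly in word length while the height of $\delta_n\eS$ over a fixed footprint grows quadratically in $n$, i.e.\ at rate $\Theta(n)$ per unit of radius, with the walls near the vertices of $L$ steepening under dilation. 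Hence a word of length $n$ ends at height at most $H_n(x,y)+c_{\max}n\le H_{n+K}(x,y)$ for a constant $K=K(S)$, so its endpoint lies in the \CC ball of radius $n+K$.

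The same missing ingredient sinks the key step of your upper bound $|\X|_S\le\dCC(\X,\zero)+K$. After rounding, you have a \emph{fixed} multiset of steps of total $L$-length $d+O(1)$, where $d=\dCC(\X,\zero)$; no reordering of it (repair steps included) can enclose balayage area exceeding $H_{d+O(1)}(x,y)$, which equals $z+O(d)$ when $z=H_d(x,y)$ is the full sphere height. Now run your construction for $S=\pm\{(1,0,-M),(0,1,0)\}$ with $M$ large and target $\X=(n,n,z)$ at the top of the sphere of radius $d=2n+O(1)$: every word over this fiber has $C=-Mn$, so the area you must realize is $z-C=z+Mn$, which exceeds the maximum over all reorderings by $Mn-O(n)>0$. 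So ``the required value $z-C$ lies well inside this interval'' is simply false, and no transpositions or $O(1)$ corrective loops can repair it: you must discard the rounded multiset and take a genuinely longer path (longer by a constant depending on $M$, hence on $S$) enclosing more area. Knowing that a bounded amount of extra length buys the needed $\Theta(n)$ of extra area is, once again, precisely the quadratic-slack statement above. The correct skeleton is therefore: first prove a uniform slack lemma, say $H_{n+1}(x,y)-H_n(x,y)\ge c_0\,n$ for $(x,y)\in nQ$ (suitably interpreted near $L$, where the paper's counting section gives the matching upper bound $h_n=O(n)$); then your rounding argument works after replacing ``reorder the multiset'' by ``re-round the \CC geodesic to a slightly larger radius,'' and the other inequality follows as the paper's remark indicates.
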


The second result, bounded difference between the word metric and the \CC metric, is stronger here
in $H(\Z)$ than for the general case, where one only has that the ratio goes to $1$.  
In fact Breuillard showed in \cite{breuillard} that there exist 2-step nilpotent groups where bounded
difference fails.  

For the word metric $(H(\Z),S)$, we will call $\eS$ and the induced
\CC metric the {\em limit shape} and {\em limit metric}, respectively. 
The volume of the unit ball in the \CC metric will turn out to be
an important number to attach to both the word metric and the \CC
metric itself; we will denote it by
$$\VS=\VS(S)=\VS(L):=\vol(\hat\eS).$$
The Hausdorff dimension of $H(\R)$ 
is equal to $4$, despite the topological dimension of $3$, as one can see 
clearly by considering the growth of a cube under the dilation.
Thus the volume of the ball of radius $r$ in the \CC metric equals
$\VS\sdot r^4$, and in $H(\Z)$, we have $|B_n|=\VS\sdot n^4 + O(n^3)$.

%{\color{red}
%We will describe the limit shape and limit metric in an extremely
%explicit manner below, recovering Pansu's result (Thm~\ref{pansu-thm})
%in our special case of $H(\Z)$.}

In the previous paper  \cite{DLM}, the case of  $\Z^d$ was studied
in detail, and there we also obtained a {\em limit measure} for a
generating set $S$.  In that case $Q$ is the convex hull in $\R^d$
of the generators $S$ themselves, and $L$ is again its boundary.
The limit shape here is given by
$L=\lim\limits_{n\to\infty} \frac 1n S_n$.
In \cite{DLM} it was shown that counting measure on $\frac 1n S_n$
converges to a measure on $L$  called  {\em cone measure}, which
assigns to a measurable set $\sigma\subset L$ the measure
$\muL(\sigma)=\frac{\vol(\hat\sigma)}{\vol(Q)}$.  
(That is, it is the proportion of the area of $Q$ that is subtended
by $\sigma$, in this case by Euclidean dilation.)
This allows us to reduce the problem of averaging over larger and larger 
$S_n$ with respect to counting measure
to the finite problem of averaging over $L$ against cone measure.

Below, in Theorem~\ref{std}, we will obtain the analogous result for $H(\Z)$ with
respect to the standard generators, which enables us to calculate
spherical averages; conjecturally, cone measure is the limit measure
for all finite generating sets.  

%As an immediate consequence,
%we able to compute the first order term of the spherical growth
%function, which does not follow from the estimate provided above:
%\[
%	|S_n|=4Vn^3+O(n^2).
%\]
%For arbitrary generating sets we get a weaker result, namely
%a ``fine'' solution to the Gauss circle problem:
%\[
%	\#
%	\bigl\{
%		x\in H(\Z)
%	\big|
%		n-1 \le \dCC(x)\le n
%	\bigr\}
%		=
%	4Vn^3+O(n^2).
%\]

\subsection{Relationship  to prior literature on $H(\Z)$}

Pansu's seminal paper from 1983 \cite{pansu}, based on his dissertation,
shows that the ratio of limit metric to word metric goes to one
 for all virtually nilpotent groups with finite generating sets, as discussed above, and concludes that
these word metrics on $H(\Z)$ satisfy 
$|B_n|=\alpha\cdot n^d +o(n^d)$.   Breuillard's preprint from 2007 \cite{breuillard}
extends this result (that the growth 
has a well-defined leading coefficient) to a much larger class of groups,
and computes the coefficient.    Breuillard also gives extremely concrete 
geometric constructions in that paper, especially in the Appendix, where  some explicit
computations are shown for $(H(\Z),\std)$ that inspired the current work.

M.\! Shapiro wrote a study of  $H(\Z)$ with respect to its standard generators in 1989 \cite{shapiro}, 
giving a description of $S_n$, then
computing an exact formula for $|S_n|$ as a polynomial function of $n$.
This remarkable formula has not only a well-defined leading coefficient, but in fact
all of the coefficients are computed and only the constant term oscillates (in fact, between 
twelve different values).  
He uses this study to establish that $H(\Z)$ has infinitely many cone types, and 
is almost convex in the sense of Cannon. 

Stoll 1998 \cite{stoll2} refines Pansu's asymptotics by showing that
for 2-step nilpotent  groups with finite generating sets, 
$|B_n|=\alpha\sdot n^d + O(n^{d-1})$, and this error term is ``often" sharp---compare
our counting theorem (Thm~\ref{countingthm}).
In a separate paper, Stoll had showed that 
2-step nilpotent groups with infinite cyclic derived subgroup have a finite generating set
for which the growth series is rational; on the other hand, higher Heisenberg groups (of dimension 5
and up) have a finite generating set for which the growth series is transcendental  \cite{stoll1}.
These results are recovered by Breuillard by geometric methods.

Blach\`ere 2003  \cite{blachere} computes an exact 
word length formula with respect to 
the standard generators, using it to show ``almost-connectedness" of discrete
spheres $S_n$. 
We note that the calculations of Shapiro and Blach\`ere for $S_n$ with respect to $S=\std$ 
are somewhat combinatorial, 
and that they are essentially equivalent to the more geometric description of $S_n$
given below in \S\ref{std-sec} (which also give directional information).

Finally, in work based on her dissertation, 
Dani 2007 \cite{dani} has some striking results on asymptotic density:
for any virtually nilpotent group, she gives an exact formula for the density of finite-order elements.
Further, she shows that the values of these densities range over $\Q\cap [0,1]$ as the groups vary.

\subsection*{Acknowledgments}

Many thanks to Ralf Spatzier, Alex Eskin, and Emmanuel Breuillard for useful conversations
on numerous aspects of the problems considered here.

%%%%%%%%%%%%
\section{Isoperimetry and isoarea in normed planes}\label{isoper}
%%%
\subsection{The classical problem}

As seen above,
understanding \CC geodesics in $H(\R)$ amounts to the problem
of finding paths in $\M$ of minimal $L$-length given their endpoints and 
balayage area.
For closed loops, this is just the classical isoperimetric problem
in the normed space $\M_L$, which was elegantly solved by Busemann in the 1940s as follows.

Let the {\em polar dual} $Q^*$ of a convex, centrally symmetric polygon $Q$ be defined 
by $$Q^*=\{\X\in\M : \ \X\cdot\Y\le 1 \quad  \forall \Y\in Q\}.$$
Then if $\U$ and $\V$ are successive vertices of $Q$,  
$Q^*$ has a vertex $\A$ corresponding to the edge between them
which is the solution to $\A\cdot\U = \A\cdot\V=1$.
(That is, if the side is called $\sigma$ and the line through the origin perpendicular to $\sigma$ is called
$\ell$, then the vertex dual to $\sigma$ is a vector pointing in the direction of $\ell$ and whose length
is the reciprocal of the length of the projection of $\U$ or $\V$ to $\ell$.)
In particular, if $Q$ is a polygon with $2N$ sides, then $Q^*$ is also a $2N$-gon.

\begin{Thm}[Busemann \cite{busemann}]
Consider the norm on $\R^2$ induced by a convex centrally symmetric body $Q$.  Then the maximal ratio of 
enclosed area to perimeter is uniquely achieved (up to scale) by the {\em isoperimetrix} $I$, 
which is defined to be the boundary of the rotate by $\pi/2$
of the polar dual $Q^*$ of $Q$.  
\end{Thm}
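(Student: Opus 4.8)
The plan is to recast the statement in its scale-invariant form, reduce to convex domains, then represent the $L$-length of a convex boundary as a mixed area, after which the result drops out of Minkowski's first inequality together with its equality case. First I would fix the formulation: since ``area over perimeter'' scales like a length, I read the theorem as maximizing the scale-invariant ratio $\vol(K)/\operatorname{len}_L(\partial K)^2$, where $\vol$ is Euclidean area and $\operatorname{len}_L$ is length measured in $\normL\cdot$, with ``up to scale'' meaning up to homothety. Given any closed curve $\gamma$ bounding a region, replacing $\gamma$ by the boundary of its convex hull $K=\operatorname{conv}(\gamma)$ only increases the enclosed area and does not increase the $L$-length: each arc removed in passing to the hull is replaced by its chord, and the triangle inequality for the norm, $\int\|\gamma'\|_L\ge\|\int\gamma'\|_L$, says a segment is $L$-shortest among paths with fixed endpoints. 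Hence it suffices to optimize over convex bodies $K$.

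Second I would express the $L$-perimeter of a convex body as a mixed area with the conjectured extremizer. Let $\rho$ be rotation by $\pi/2$ and write $\mathcal I=\rho Q^*$, the rotated polar dual, so that the isoperimetrix is $\partial\mathcal I$. At a boundary point of $K$ with outer unit normal $u$, the counterclockwise unit tangent is $\rho u$, so the $L$-length element is $\|\rho u\|_L\,ds=h_{Q^*}(\rho u)\,ds$, using $\|w\|_L=h_{Q^*}(w)$ (the support function of the dual ball). By central symmetry of $Q^*$ and $\rho^{-1}=-\rho$, we have $h_{Q^*}(\rho u)=h_{Q^*}(\rho^{-1}u)=h_{\mathcal I}(u)$. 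Integrating against the surface-area measure $dS_K$ on the circle of normals yields
\[
\operatorname{len}_L(\partial K)=\int_{S^1} h_{\mathcal I}(u)\,dS_K(u)=2\,A(K,\mathcal I),
\]
where $A(\cdot,\cdot)$ is the mixed area; the case $Q=$ unit disk, for which $\mathcal I$ is again the disk and this reduces to the usual perimeter, confirms the constant. For polygonal $Q$ both $\mathcal I$ and the measure $dS_K$ are supported on finitely many directions, so the identity becomes a finite sum.

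Third, I would invoke Minkowski's first inequality $A(K,\mathcal I)^2\ge\vol(K)\,\vol(\mathcal I)$, with equality exactly when $K$ and $\mathcal I$ are homothetic. Combined with the mixed-area identity this gives
\[
\frac{\vol(K)}{\operatorname{len}_L(\partial K)^2}=\frac{\vol(K)}{4\,A(K,\mathcal I)^2}\le\frac{1}{4\,\vol(\mathcal I)},
\]
with equality precisely when $K$ is a homothet of $\mathcal I$, which identifies the isoperimetrix as the unique (up to scale and translation) maximizer and pins the optimal ratio at $1/(4\,\vol(\mathcal I))$. The main obstacle is the equality analysis: establishing the mixed-area representation for non-smooth (polygonal) boundaries and, more seriously, the fact that Minkowski's first inequality with its rigidity statement is essentially as deep as the Euclidean isoperimetric inequality itself. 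For the polygonal case relevant to this paper one can instead prove the finite-dimensional version directly---optimizing the now-finite sum for $\operatorname{len}_L$ subject to fixed area by a Lagrange/vertex argument---avoiding the general Brunn--Minkowski machinery, with the first-order conditions forcing the edges of $K$ to be exactly those of $\rho Q^*$.
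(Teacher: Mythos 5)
Your proposal is correct, but there is nothing in the paper to compare it against: the paper does not prove this statement at all. It is quoted as Busemann's theorem with a citation to his 1947 paper, and the only argument the authors supply in its vicinity is the short computation that, when $Q$ is a polygon, the sides of $I$ are parallel to the vertex directions of $Q$ (a fact which also falls out of your description of $\mathcal I=\rho Q^*$, since an edge of $Q^*$ dual to a vertex $\V$ of $Q$ is orthogonal to $\V$, hence parallel to $\V$ after rotation). So you have supplied a proof where the paper supplies none.

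On its merits, your argument is the standard modern proof and is sound: the reduction to convex bodies via the convex hull and the triangle inequality for $\normL\cdot$ is fine; the identity $\operatorname{len}_L(\partial K)=\int_{S^1}h_{\mathcal I}(u)\,dS_K(u)=2A(K,\mathcal I)$ is correct, using $\normL{w}=h_{Q^*}(w)$, $h_{\rho Q^*}(u)=h_{Q^*}(\rho^{-1}u)$, and central symmetry to absorb the sign in $\rho^{-1}=-\rho$; and Minkowski's first inequality with its equality case (homothets) then pins down the maximizer uniquely up to scale and translation. The one caveat is the one you flag yourself: the rigidity statement in Minkowski's inequality is essentially equivalent in depth to the isoperimetric inequality you are proving, so the argument is a reduction to standard Brunn--Minkowski machinery rather than a from-scratch proof. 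That is a perfectly legitimate way to establish the theorem (it is how it is usually done in, e.g., treatments of Minkowski geometry), and for the polygonal case relevant to this paper your proposed finite-dimensional optimization over edge lengths in the finitely many admissible normal directions would give a self-contained alternative, though as written that last step is only a sketch.
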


If $Q$ is a polygon, then the sides of $I$ are parallel to the
vertex directions of $Q$.  Here is the calculation:
suppose $a$ and $b$ are successive vertices of $Q^*$, so they are
dual to successive edges of $Q$, sharing a vertex $v$.  Then a side
of $I$ is obtained by rotating $a-b$ by $\pi/2$, so that side of
$I$ is perpendicular to $a-b$.  But since $a\cdot v=1$ and $b\cdot v=1$
(by construction of $a$ and $b$), we have $(a-b)\cdot v=0$, which
shows that the side of $I$ is parallel to $v$.

Most of the rest of this section is devoted to solving modifications
of this problem that are relevant for the nilpotent geometry.  We
will introduce classes of curves called {\em trace paths} and
{\em beelines}, then show that these provide solutions to the
Dido problem and the dual isoarea problem in $\M_L$.

%%%
\subsection{Trace paths}

Let $I_1$ be a scaled copy of $I$,  scaled to have perimeter one
in the $L$-norm.  Fix a parametrized $1$-periodic curve
$\iota: \R \to \R^2$ so that the image of $\iota$ is $I_1$,
the parametrization is by arclength relative to the norm,
and $\iota$ traverses $I_1$ counterclockwise.

For any choice of parameters $t\in [0,1]$ and $T\in (t,t+1]$, 
let $$\trace_{t,T}(s)=\frac{\iota\bigl(sT+t-st\bigr)-\iota(t)}{T-t}, \qquad s\in [0,1].$$
We call these {\em trace paths}:  they are parametrized paths of length one in the norm, whose
shape follows along
the perimeter of $\frac 1{T-t} I_1$ (a scaled copy of the isoperimetrix).  These trace paths
start at the origin and  therefore must terminate inside the unit ball in the norm (namely $Q$).
See Figure~\ref{combx} for a classification of trace paths for two different choices of $Q$.

\begin{defn} For a given normed plane $\M_L$,
let the {\em balayage function} $A:Q\to\R$ denote the maximal 
balayage area among all paths of length $1$ from $(0,0)$ to $(x,y)$.
\end{defn}

\begin{lemma}[Trace paths solve the Dido problem]
\label{dido}
Given $(x,y)\in Q$, 
a path $\gamma$ of $L$-length $1$ connecting $(0,0)$ and $(x,y)$ has maximal 
balayage area among all such paths if and only if it is a trace path:
 $\gamma=\trace_{t,T}$
where $\trace_{t,T}(1)=(x,y)$.  
\end{lemma}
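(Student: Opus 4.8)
The plan is to reduce this constrained (Dido) problem to Busemann's \emph{unconstrained} isoperimetric inequality for closed loops, by completing every competing path into a closed curve using a complementary arc of the isoperimetrix itself. The familiar Euclidean reduction---reflecting the path across the chord and invoking the isoperimetric inequality on the doubled curve---fails here, because reflection in a line is not an isometry of a general normed plane. Closing up instead with an arc of the isoperimetrix sidesteps this, since it only uses the norm's central symmetry implicitly, through Busemann's theorem.

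Fix $(x,y)\in Q$ and let $\tau=\trace_{t,T}$ be a trace path with $\tau(1)=(x,y)$. By construction $\tau$ is a length-$1$ arc of the scaled isoperimetrix $I'=\tfrac{1}{T-t}I_1$, a convex closed curve of $L$-perimeter $\tfrac{1}{T-t}\ge 1$. Let $\beta$ denote the complementary arc of $I'$ (traversed in the same, counterclockwise, sense), translated so that it runs from $(x,y)$ back to $(0,0)$; then $\beta$ has $L$-length $\tfrac{1}{T-t}-1$, and $\tau$ followed by $\beta$ is a translate of the whole of $I'$. Now let $\gamma$ be any competitor of $L$-length $1$ from $(0,0)$ to $(x,y)$, and form the closed curve $\gamma\cup\beta$. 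Its length is $1+(\tfrac{1}{T-t}-1)=\tfrac{1}{T-t}$, exactly the perimeter of $I'$.

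Next I would translate areas into the language of the isoperimetric inequality. Since the straight chord from $(0,0)$ to $(x,y)$ contributes nothing to $\int\tfrac12(\gamma_1\,d\gamma_2-\gamma_2\,d\gamma_1)$ (exactly as in the proof of Lemma~\ref{balayage}), the balayage area of any path equals the signed area integral along it; hence the signed area enclosed by $\gamma\cup\beta$ equals $\operatorname{area}(I')$ plus the difference between the balayage areas of $\gamma$ and of $\tau$. Applying Busemann's theorem to $\gamma\cup\beta$---a closed curve whose length equals the perimeter of $I'$, and for which $I'$ is the extremal shape---bounds its enclosed area above by $\operatorname{area}(I')$, which forces the balayage area of $\gamma$ to be at most that of $\tau$. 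This proves $\tau$ is a maximizer (the ``if'' direction). For the converse and uniqueness I would invoke the \emph{rigidity} in Busemann's theorem: the extremizer is unique up to translation and scale, so equality above forces $\gamma\cup\beta$ to be a translate of $I'$, whence $\gamma=\tau$. In particular the trace path reaching $(x,y)$ is unique, and every maximizer coincides with it.

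Two points remain. First, the argument presumes that \emph{some} trace path reaches each $(x,y)\in Q$; I would establish surjectivity of the endpoint map $(t,T)\mapsto\trace_{t,T}(1)$ onto $Q$ by a continuity or degree argument, or read it off the classification of trace paths indicated in Figure~\ref{combx}. Note that because the comparison pits $\tau$ directly against an arbitrary $\gamma$, no separate compactness argument for the existence of a maximizer is needed---only existence of $\tau$ itself. Second, I must apply Busemann's result in its sharp form---both inequality and equality case---to the curve $\gamma\cup\beta$, which need not be convex or simple; this requires the signed-area version of the isoperimetric inequality together with careful orientation bookkeeping, so that counterclockwise trace paths correspond to maximal (rather than minimal) balayage area. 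I expect this equality/rigidity statement to be the crux, since both the ``only if'' direction and the uniqueness of the maximizing trace path rest on it.
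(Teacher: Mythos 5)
Your core construction is exactly the paper's: complete the competitor to a closed curve of length $\frac{1}{T-t}$ by appending the complementary arc of the scaled isoperimetrix $\lambda I_1$, $\lambda=\frac{1}{T-t}$ (your $I'$), then invoke Busemann's theorem together with its uniqueness clause. Where the two arguments genuinely differ is in how they handle competitors whose completion is not a simple convex curve. The paper compares the trace path only against an area-\emph{maximizing} competitor: maximality of balayage area forces convexity, so $\gamma$ stays on one side of the chord, the completed curve is simple, and Busemann's theorem applies in exactly the quoted convex-body form; the hidden costs are an unstated compactness argument (existence of a maximizer) and the standard fact that maximality implies convexity. You instead pit the trace path against an arbitrary competitor, which buys you exactly what you claim --- no existence-of-maximizer step --- but the price is that the isoperimetric inequality, including its equality case, must be applied to a closed curve that may be non-simple and non-convex. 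This signed-area strengthening of Busemann's theorem is true in normed planes (e.g.\ by rearrangement/convexification of edge vectors for polygonal curves, then a limiting argument), but it is strictly more than the statement quoted in the paper, and you are right to flag it as the crux: a complete write-up would have to prove or cite it. Your first deferred point (that some trace path reaches every $(x,y)\in Q$) is likewise needed by the paper's own proof and is only established afterwards, in Theorem~\ref{tracepaths}.

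One concrete overstatement to repair: from the equality case you conclude that $\gamma\cup\beta$ is a translate of $\lambda I_1$ ``whence $\gamma=\tau$,'' and in particular that the trace path and the maximizer to $(x,y)$ are unique. This fails exactly when $\beta$ does not pin down the translate: when $\beta$ is empty (the case $(x,y)=(0,0)$, where $\tau$ is a closed loop), or when $\beta$ lies inside a single edge of $\lambda I_1$ --- equivalently, when the chord from $(0,0)$ to $(x,y)$ is parallel to a side of the isoperimetrix, i.e.\ $(x,y)$ lies in some $Q_{ii}$. There, translates of $\lambda I_1$ along that edge direction all contain $\beta$, distinct trace paths with distinct images reach the same endpoint, and maximizers are genuinely non-unique; this is precisely the nonuniqueness locus $\NN_0$ appearing later in Proposition~\ref{nonunique-geods}. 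The lemma itself is unharmed, because any counterclockwise unit-length arc of any translate of a scaled isoperimetrix starting at the origin is by definition a trace path; so the correct conclusion from equality is ``$\gamma$ is a trace path ending at $(x,y)$,'' not ``$\gamma=\tau$.''
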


\begin{proof}  
Consider a particular trace path with $\trace_{t,T}(1)=(x,y)$.
Let $\alpha$ be the straight chord from $(0,0)$ to $(x,y)$
and let $\lambda=\frac{1}{T-t}$.  Let $A_{\sf tr}$ denote the area enclosed by
$\trace_{t,T}$ and $\alpha$; we want to show that this is maximal.
Note that the trace path can be completed to $\lambda I_1$ (a scaled copy of $I$)
by extending the domain up to $s=\lambda$.  Thus $\alpha$ can be realized as a chord of $\lambda I_1$ 
that cuts it into two pieces, and $A_{\sf tr}$ is the area of one of those two pieces.
But now suppose there were a different path 
$\gamma$ in $\R^2$, of length one in the $L$-norm, connecting $(0,0)$ to $(x,y)$
while enclosing maximal area  $A(x,y)\ge A_{\sf tr}$ with the chord $\alpha$.
Maximality of area implies convexity,
so in particular the path $\gamma$ always stays on one side of the line through 
$\alpha$.  But then $\gamma$ can also be concatenated with $\trace_{t,T}([1, \lambda])$, producing
a figure which must be a copy of $\lambda I_1$, by uniqueness of the isoperimetrix.  Thus
$A_{\sf tr}=A(x,y)$, showing that trace paths enclose maximal area among all paths of length one, as
desired.  Indeed, this 
also means that the image of $\gamma$ is uniquely determined, which forces it to be a trace path,
though it is possible that
$\gamma=\trace_{t,T}=\trace_{t',T'}$ for another 
choice of parameters with $t'=t+k$, $T'=T+k$.
\end{proof}

From here on we will assume that $Q$ is a $2N$-gon, in which case
$I_1$ will also be a $2N$-gon and $\iota$ will be piecewise
affine: $\iota(s)=s\A+\B$ for
fixed vectors $\A,\B$ when $s$ is in the subinterval of values
corresponding to a side of $I$.
Let $\iota$ be chosen so that $\iota(0)$ is a vertex in a copy of $I_1$
and let $\sigma_1,\sigma_2, \ldots, \sigma_{2N}$ be the sides
listed counterclockwise.
Let $\ell_i$ be the sidelength of $\sigma_i$ for all $i$, so that $\sum \ell_i=1$.

Define $$Q_{ij}:=\left\{ \frac{\iota(T)-\iota(t)}{T-t}  :   \quad \iota(t)\in \sigma_i, \ \  \iota(T)\in \sigma_j\right\}.$$
This is the subset of $Q$ consisting 
of endpoints $\trace_{t,T}(1)$ of those trace paths whose shape on $I_1$ begins on the $i$th side
and ends on the $j$th side.

We will say that two continuous, piecewise linear paths have the {\em same shape}
 if there exist vector directions 
$\V_1,\ldots,\V_k$ such that each path follows $a_1\V_1,a_2\V_2,\ldots,a_k\V_k$
in the same order, with $a_1,a_k\ge 0$, $a_2,\ldots,a_{k-1}>0$, and $\V_i\neq \V_{i+1}$ for all $i$.

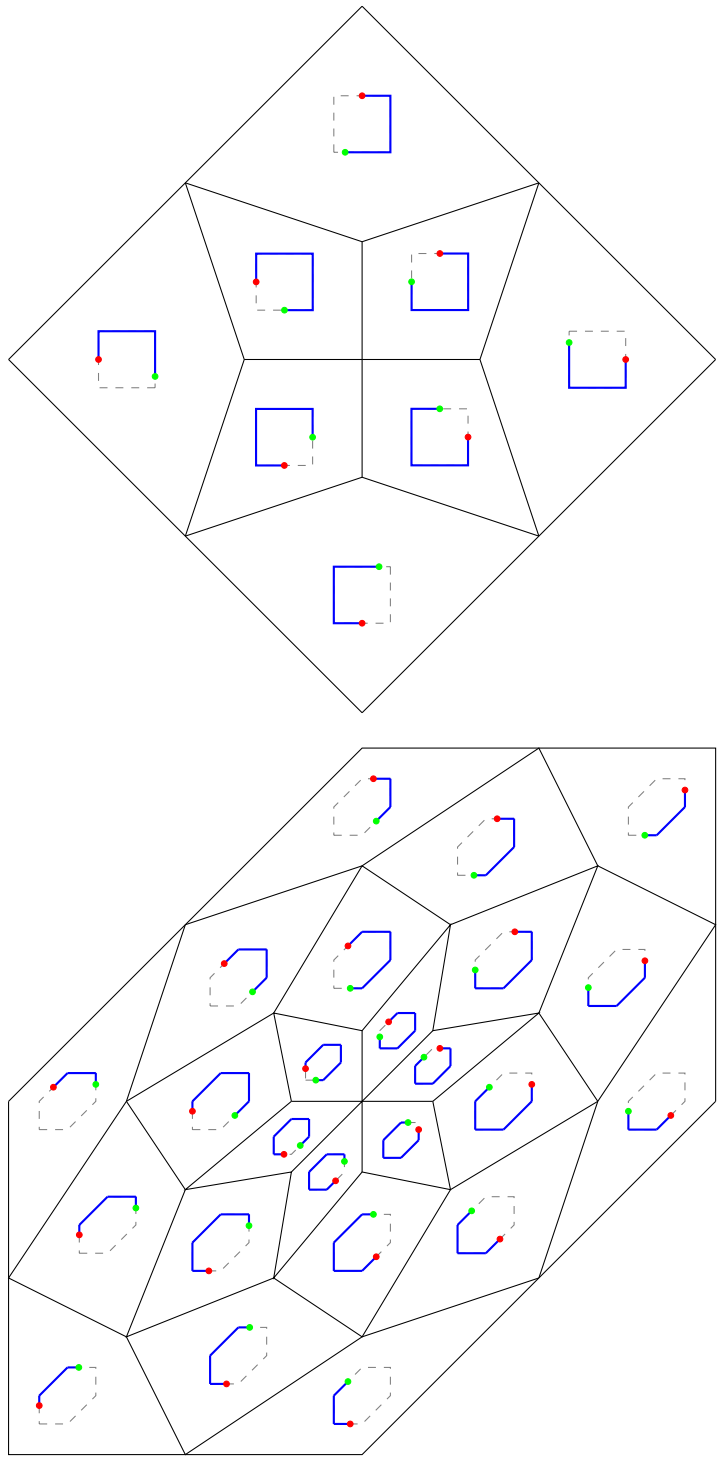
\begin{figure}
\begin{tikzpicture}[scale=4.7]
\foreach \ang in {0,90,180,270}
{\def\r{.08}
\def\isoper{+(-\r,-\r) rectangle +(\r,\r)}
\def\LRtrace{+(-.6*\r,-\r)--+(\r,-\r)--+(\r,\r)--+(0,\r)}
\def\LTtrace{+(-\r,0)--+(-\r,-\r)--+(\r,-\r)--+(\r,\r)--+(0,\r)}
{\begin{scope}[rotate=\ang]
\coordinate (x) at (0,2/3);
\coordinate (y) at (.22,.22);
\draw (1,0)--(0,1);
\draw [gray,dashed] (x)\isoper;
\draw [thick, blue] (x)\LRtrace;
\filldraw [green] (x)+(-.6*\r,-\r) circle (.008);
\filldraw [red] (x)+(0,\r) circle (.008);
\draw [gray,dashed] (y)\isoper;
\draw [thick, blue] (y)\LTtrace;
\filldraw [green] (y)+(-\r,0) circle (.008);
\filldraw [red] (y)+(0,\r) circle (.008);
\draw (1/2,1/2)--(0,1/3)--(-1/2,1/2);
\draw (0,0)--(1/3,0);
\end{scope}}}

\begin{scope}[yshift=-2.1cm]
\def\B{.08}
\def\b{.05}
\def\isoper{+(-\r,-\r)--+(0,-\r)--+(\r,0)--+(\r,\r)--+(0,\r)--+(-\r,0)--cycle}
\def\top{+(.4*\r,\r)}  \def\nw{+(-.5*\r,.5*\r)}  \def\left{+(-\r,-.35*\r)} \def\bot{+(-.42*\r,-\r)}  \def\se{+(.5*\r,-.5*\r)}   \def\right{+(\r,.6*\r)}
\def\vtop{+(0,\r)}  \def\vnw{+(-\r,0)}  \def\vleft{+(-\r,-\r)} \def\vbot{+(0,-\r)}  \def\vse{+(\r,0)}   \def\vright{+(\r,\r)}
\def\topstart{\top--\vtop} \def\nwstart{\nw--\vnw} \def\leftstart{\left--\vleft} \def\botstart{\bot--\vbot} \def\sestart{\se--\vse} \def\rightstart{\right--\vright}
\def\topstop{\vright--\top} \def\nwstop{\vtop--\nw} \def\leftstop{\vnw--\left} \def\botstop{\vleft--\bot} \def\sestop{\vbot--\se} \def\rightstop{\vse--\right}
\def\nwside{\vtop--\vnw}  \def\leftside{\vnw--\vleft} \def\botside{\vleft--\vbot}  \def\seside{\vbot--\vse} \def\rightside{\vse--\vright} \def\topside{\vright--\vtop}

\coordinate (a) at (5/6,5/6); \coordinate (A) at (-5/6,-5/6);
\coordinate (b) at (5/6,0); \coordinate (B) at (-5/6,0);
\coordinate (c) at (.72,.35); \coordinate (C) at (-.72,-.35);
\coordinate (d) at (.35,.72); \coordinate (D) at (-.35,-.72);
\coordinate (e) at (.4,.4); \coordinate (E) at (-.4,-.4);
\coordinate (f) at (0,5/6); \coordinate (F) at (0,-5/6); 
\coordinate (g) at (0,.4); \coordinate (G) at (0,-.4);
\coordinate (h) at (-.35,.35); \coordinate (H) at (.35,-.35);
\coordinate (i) at (.4,0); \coordinate (I) at (-.4,0);
\coordinate (j) at (-.11,.11); \coordinate (J) at (.11,-.11);
\coordinate (k) at (.2,.1); \coordinate (K) at (-.2,-.1);
\coordinate (l) at (.1,.2); \coordinate (L) at (-.1,-.2);
\foreach \r/\x in {\B/a,\B/b,\B/c,\B/d,\B/e,\B/f,\B/g,\B/h,\B/i,\b/j,\b/k,\b/l,\B/A,\B/B,\B/C,\B/D,\B/E,\B/F,\B/G,\B/H,\B/I,\b/J,\b/K,\b/L}
{\draw [gray,dashed] (\x)\isoper;}
\foreach \r/\x in {\B/A,\B/D,\B/G,\b/J}{\draw [thick,blue] (\x)\topstart; \filldraw [green] (\x)\top circle (0.008);}
\foreach \r/\x in {\B/F,\B/H,\B/i,\b/k}{\draw [thick,blue] (\x)\nwstart; \filldraw [green] (\x)\nw circle (0.008);}
\foreach \r/\x in {\B/b,\B/c,\B/e,\b/l}{\draw [thick,blue] (\x)\leftstart; \filldraw [green] (\x)\left circle (0.008);}
\foreach \r/\x in {\B/a,\B/d,\B/g,\b/j}{\draw [thick,blue] (\x)\botstart; \filldraw [green] (\x)\bot circle (0.008);}
\foreach \r/\x in {\B/f,\B/h,\B/I,\b/K}{\draw [thick,blue] (\x)\sestart; \filldraw [green] (\x)\se circle (0.008);}
\foreach \r/\x in {\B/B,\B/C,\B/E,\b/L}{\draw [thick,blue] (\x)\rightstart; \filldraw [green] (\x)\right circle (0.008);}
\foreach \r/\x in {\B/f,\B/d,\B/e,\b/k}{\draw [thick,blue] (\x)\topstop; \filldraw [red] (\x)\top circle (0.008);}
\foreach \r/\x in {\B/B,\B/h,\B/g,\b/l}{\draw [thick,blue] (\x)\nwstop; \filldraw [red] (\x)\nw circle (0.008);}
\foreach \r/\x in {\B/A,\B/C,\B/I,\b/j}{\draw [thick,blue] (\x)\leftstop; \filldraw [red] (\x)\left circle (0.008);}
\foreach \r/\x in {\B/F,\B/D,\B/E,\b/K}{\draw [thick,blue] (\x)\botstop; \filldraw [red] (\x)\bot circle (0.008);}
\foreach \r/\x in {\B/b,\B/H,\B/G,\b/L}{\draw [thick,blue] (\x)\sestop; \filldraw [red] (\x)\se circle (0.008);}
\foreach \r/\x in {\B/a,\B/c,\B/i,\b/J}{\draw [thick,blue] (\x)\rightstop; \filldraw [red] (\x)\right circle (0.008);}
\foreach \r/\x in {\B/B,\B/h,\B/g,\b/j,\B/C,\B/E,\b/l,\b/K,\b/L,\B/I} {\draw [thick,blue] (\x)\topside;}
\foreach \r/\x in {\B/A,\B/E,\B/C,\B/I,\b/j,\B/D,\B/G,\b/J,\b/K,\b/L} {\draw [thick,blue] (\x)\nwside;}
\foreach \r/\x in {\B/F,\B/H,\B/i,\b/k,\B/D,\B/G,\b/J,\b/K,\b/L,\B/E} {\draw [thick,blue] (\x)\leftside;}
\foreach \r/\x in {\B/b,\B/H,\B/G,\b/J,\B/c,\B/e,\b/L,\b/k,\b/l,\B/i} {\draw [thick,blue] (\x)\botside;}
\foreach \r/\x in {\B/a,\B/e,\B/c,\B/i,\b/J,\B/d,\B/g,\b/j,\b/k,\b/l} {\draw [thick,blue] (\x)\seside;}
\foreach \r/\x in {\B/f,\B/h,\B/I,\b/K,\B/d,\B/g,\b/j,\b/k,\b/l,\B/e} {\draw [thick,blue] (\x)\rightside;}

\foreach \ang in {0,180}
{\begin{scope}[rotate=\ang]
\draw (1,0)--(1,1)--(0,1)--(-1,0);
\draw (-1/2,1/2)--(0,2/3)--(1/2,1)--(2/3,2/3)--(1,1/2)--(2/3,0)--(1/2,-1/2);
\draw (-2/3,0)--(-1/4,1/4)--(0,2/3)--(1/4,1/2)--(2/3,2/3)--(1/2,1/4)--(2/3,0);
\draw (-1/5,0)--(-1/4,1/4)--(0,1/5)--(1/4,1/2)--(1/5,1/5)--(1/2,1/4)--(1/5,0);
\draw (0,1/5)--(0,0)--(1/5,1/5);
\draw (0,0)--(1/5,0);
\end{scope}}
\end{scope}
\end{tikzpicture}

\caption{The combinatorics of trace paths for a square and a hexagon.  
We can count $8$ and $24$  nondegenerate quadrilaterals, respectively, with 
constant combinatorics on each.\label{combx}}
\end{figure}

\begin{Thm}[Combinatorics of trace paths]\label{tracepaths}
If  $Q$ is a polygon with $2N$ sides, then
there are $4N^2-4N$ quadrilaterals $Q_{ij}$, covering $Q$ and overlapping
only on their boundaries, 
such that the trace paths
have ``constant combinatorics" on each piece:  that is,  the trace paths 
ending at  any two points 
in $Q_{ij}$ have the same shape.
\end{Thm}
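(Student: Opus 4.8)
The plan is to coordinatize the isoperimetrix by arclength and then realize each region $Q_{ij}$ as the image of an explicit parameter rectangle under a map that is projective (``perspective'') in $(t,T)$. Write the vertices of $I_1$ as $\iota(a_0),\ldots,\iota(a_{2N})=\iota(a_0)$ at the arclength parameters $0=a_0<a_1<\cdots<a_{2N}=1$, so that on the $i$-th side $\iota(t)=\iota(a_{i-1})+(t-a_{i-1})\V_i$, where $\V_i$ is the $L$-unit edge direction; by the fact following Busemann's theorem, $\V_i$ points in a vertex direction of $Q$ and, being an $L$-unit vector, is itself a vertex of $Q$. For a trace path beginning on side $i$ and ending on side $j$, the defining formula gives the endpoint $\trace_{t,T}(1)=\dfrac{\iota(T)-\iota(t)}{T-t}$, where $(t,T)$ ranges over the rectangle $R_{ij}=[a_{i-1},a_i]\times[a_{j-1},a_j]$, reading indices cyclically and shifting $T$ by the period $1$ when the arc wraps. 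Indexing instead by the starting side $i$ and the number $k$ of sides visited ($k=1,\ldots,2N$) makes the bookkeeping cleanest: each pair $(i,k)$ corresponds to a single well-defined rectangle $R_{ij}$.

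First I would prove the geometric heart of the statement, namely that the four boundary arcs of $Q_{ij}$ are straight segments. Fixing $t$ and letting $T$ vary over one side $\sigma_j$, a one-line rearrangement gives
$$\frac{\iota(T)-\iota(t)}{T-t}=\V_j+\frac{\iota(a_{j-1})-\iota(t)-(a_{j-1}-t)\V_j}{T-t},$$
whose numerator is constant in $T$; hence the endpoint moves along the fixed line through $\V_j$ in the direction of that numerator, tracing a segment. The symmetric computation with $T$ fixed and $t$ varying handles the other family. Thus the map $(t,T)\mapsto\trace_{t,T}(1)$ carries each edge of $R_{ij}$ to a line segment, and since the two foliations of $R_{ij}$ by fixed-$t$ and fixed-$T$ lines map to the two families of segments sweeping out the region, $Q_{ij}$ is exactly the quadrilateral whose four vertices are the corner images, the ``chord slopes'' $\dfrac{\iota(a_b)-\iota(a_a)}{a_b-a_a}$ with $a\in\{i-1,i\}$, $b\in\{j-1,j\}$. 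Here I would note that the denominator $T-t$ is bounded away from $0$ on $R_{ij}$ precisely when at least one full side separates $\sigma_i$ from $\sigma_j$, i.e.\ when $k\ge 3$, so the perspective map is genuine and its image is an honest $2$-dimensional quadrilateral.

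Next I would isolate the two degenerate families to pin down the count. When $k=1$ (so $j=i$) the quotient is constantly $\V_i$, so $Q_{ii}$ is the single vertex of $Q$ in direction $\V_i$. When $k=2$ (so $j=i+1$), writing $\iota(t)=\iota(a_i)-u\V_i$ and $\iota(T)=\iota(a_i)+v\V_{i+1}$ with $u,v\ge 0$ yields
$$\trace_{t,T}(1)=\frac{u\V_i+v\V_{i+1}}{u+v},$$
a convex combination of the vertices $\V_i,\V_{i+1}$ of $Q$; thus $Q_{i,i+1}$ collapses to the edge of $Q$ joining them. Both families are lower-dimensional and are discarded. Every remaining pair, i.e.\ $k\in\{3,\ldots,2N\}$ (equivalently $j\notin\{i,i+1\}$ cyclically), gives a nondegenerate quadrilateral by the previous step. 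For each of the $2N$ starting sides there are exactly $2N-2$ admissible ending sides, producing $2N(2N-2)=4N^2-4N$ quadrilaterals.

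Finally I would verify covering, constant combinatorics, and the overlap claim. Covering is immediate from Lemma~\ref{dido}: every point of $Q$ is the endpoint of a maximal-area trace path, hence lies in some $Q_{ij}$, and one checks that the two degenerate strata reappear as parts of the shared straight edges of the nondegenerate quadrilaterals, so the nondegenerate family alone covers $Q$ (including $\partial Q$). A trace path reaching a point of $Q_{ij}$ follows $\V_i,\V_{i+1},\ldots,\V_j$ in order, a full segment for each interior side and a possibly zero-length segment at each end; this is exactly the ``same shape'' relation, and the allowance of vanishing end-coefficients makes constant combinatorics hold on the closed quadrilateral. For the overlap statement I would invoke the uniqueness half of Lemma~\ref{dido}: a point in the interior of $Q$ is reached by a trace path with uniquely determined image, hence a uniquely determined starting and ending side, so it lies in the interior of a single $Q_{ij}$; the points admitting two descriptions are exactly those where $\iota(t)$ or $\iota(T)$ sits at a vertex of $I_1$, and these fill out precisely the straight edges computed above. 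The main obstacle is the straightness identity together with the careful identification of the two degenerate families, since overlooking that same sides collapse to vertices and adjacent sides collapse to edges of $Q$ is exactly what throws off the count $4N^2-4N$.
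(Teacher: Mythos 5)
Your core argument is essentially the paper's: the paper also establishes that the fixed-$t$ and fixed-$T$ level sets of $(t,T)\mapsto \trace_{t,T}(1)$ are straight segments (by differentiating the perspective form $(t\A+\B)/(c-t)$), identifies $Q_{i,i+1}$ with the edge of $L$ joining $\V_i$ and $\V_{i+1}$, and arrives at the count $2N(2N-2)$. However, there are two genuine gaps. First, your parametrization misses the \emph{wrap-around} trace paths. Since $T$ ranges over $(t,t+1]$, a trace path may start on side $\sigma_i$, traverse the entire isoperimetrix, and terminate on $\sigma_i$ again in the next period; in your indexing this is $k=2N+1$, which your range $k\in\{1,\ldots,2N\}$ excludes. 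Consequently your claim that $Q_{ii}$ is the single vertex $\V_i$ is false: as the paper shows, $Q_{ii}$ has a second component, a line segment from the origin in the direction $-\V_i$ of $L$-length up to $\ell_i/(1-\ell_i)$, consisting of endpoints of these wrap-around paths. This also breaks your overlap discussion: on exactly these segments, which lie in the \emph{interior} of $Q$, a one-parameter family of wrap-around paths with genuinely different images (obtained by sliding the chord along the side $\sigma_i$ of the scaled isoperimetrix) all end at the same point, so it is not true that every interior point of $Q$ has a uniquely determined trace-path image, nor that all nonuniqueness comes from $\iota(t)$ or $\iota(T)$ sitting at a vertex of $I_1$. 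Your count of $4N^2-4N$ survives (the missing family is also degenerate, and these segments do lie on boundaries of nondegenerate quadrilaterals), but this locus is precisely the set $\NN_0$ on which the paper's later nonuniqueness, tracking, and instability results turn, so it cannot be dropped from the classification.

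Second, your covering argument is circular as a step in the logical development. You invoke Lemma~\ref{dido} to say every point of $Q$ is the endpoint of a maximal-area path and hence of a trace path; but the proof of Lemma~\ref{dido} itself begins by taking the trace path ending at the given point $(x,y)$ and comparing an arbitrary competitor against it, i.e.\ it \emph{presupposes} that trace paths reach every point of $Q$ --- which is the covering statement you are trying to prove. (You also assert, without a compactness argument, that an area-maximizing path of length one exists.) The paper avoids this by proving covering directly: for fixed $k=T-t$, the set of endpoints $\trace_{t,t+k}(1)$ is a centrally symmetric convex polygon varying continuously in $k$, equal to $L$ for all sufficiently small $k$ and shrinking to the origin as $k\to 1$, so these polygons sweep out all of $Q$. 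Some such independent argument is needed to close your proof.
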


\begin{proof}
We will show that the quadrilaterals $Q_{ii}$ and $Q_{i,i+1}$ are degenerate (have no area in $\M$).  
Discarding those, there remain $(2N)(2N-2)$ nondegenerate quadrilaterals, which will be seen to have the
properties described in the statement above.

It will be helpful to note right away that within a given $Q_{ij}$, each level set in $t$ or $T$ 
is a straight line.  
This is because $\iota$ is affine on each edge.
Thus, within the quadrilaterals and for fixed $T$, the function $\trace$ is
of the form $(t\A+\B)/(c-t)$, so its derivative is $\frac{1}{(c-t)^2}[c\A+\B]$, which has constant direction.
This shows that 
$t$-level sets (and, similarly, $T$-level sets) are a family of straight segments sweeping out each quadrilateral, though not
in general a parallel family.
Let us also consider level sets of $\trace$ at $T-t=k$.  In this case we have 
$$\trace_{t,t+k}(1)=\frac{\iota(t+k)-\iota(t)}k,$$
which produces a parallel family of straight segments
within $Q_{ij}$ indexed by $k$.

We now 
consider paths starting and ending on the $i$th side.  This quadrilateral $Q_{ii}$ has two components:  
either 
the whole trace path is contained in the $i$th side, or
the trace path traverses all the sides of $I$ before returning.
Recall that the side $\sigma_i$ is parallel to a vertex direction from $L$, say for vertex $\V_i$.
If the trace path is totally contained in $\sigma_i$ then the 
image $\trace_{t,T}(1)$ is the vertex $\V_i$ itself, because it points in that direction and has length one
in the $L$ norm.  
In the other case, we point in the $-\V_i$ direction. 
The magnitude of the vectors in this component of $Q_{ii}$
range from 0 (achieved when $T=1+t$, so the path closes up)
to $\ell_i/(1-\ell_i)$
(achieved when $t$ is at the end of $\sigma_i$ and $T$ has wrapped around to the beginning of $\sigma_i$).  
Thus each $Q_{ii}$ is a vertex of $L$ together with a line segment in the interior of $Q$.

Now we will show that $Q_{i,i+1}$ is the edge of $L$ with endpoints $\V_i$, $\V_{i+1}$.
This follows simply from noticing that each of those trace paths is a two-sided path following
direction $\V_i$ for time $a$ and then direction $\V_{i+1}$ for time $1-a$, so that it terminates
at the point $a\V_i+(1-a)\V_{i+1}$.  Notice that these particular trace paths are geodesic in $\R^2$ with respect
to the $L$-norm; 
and indeed since all trace paths have $L$-length one, they are $L$-norm-geodesics
if and only if they terminate on $L$ itself (the sphere of radius one).

For the remaining quadrilaterals, 
there are four extreme points:  we may start at either endpoint 
of $\sigma_i$ and terminate at either endpoint of $\sigma_j$.  
Indeed, there is a quadrilateral traced out by holding the start point or end point on $I_1$ fixed at 
one extreme while
moving the other from extreme to extreme, then alternating which is fixed and which is moving, making
a circuit of length four.  (Its four subpaths are level sets for $t$ or $T$, so they are straight lines.)

Finally, observe that $\trace_{t,t+k}(1)$ is a convex, centrally symmetric polygon for every fixed $k\in(0,1]$.
This polygon varies continuously in $k$, is identically zero when $k=1$ and equals $L$ itself for $k$
sufficiently small.  We can conclude that all points in $Q$ are hit by trace paths.
\end{proof}

\subsection{Beelines}

Let us call a path in $\R^2$ a {\em beeline} if it has $L$-length
one and connects the origin to a point on $L$.  That is, beelines
are  $L$-geodesic segments emanating from the origin.  One can
check that a given path ending interior to a side $\sigma$ of
$L$ is a beeline by verifying that all of its tangent vectors
point towards $\sigma$.  The name
reflects their consistent progress away from the origin, as
opposed to trace paths which typically make turns in several  directions that seem inefficient in the 
$L$-norm.

\begin{lemma}[Existence of beelines]
Every point $(x,y)\in L$ is reached by a unique trace path $\trace_{t,T}$, and this path encloses
a nonnegative area $A=A(x,y)$.  
There are beelines to $(x,y)$ enclosing every signed area in the range $[-A,A]$.  
\end{lemma}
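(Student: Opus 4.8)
The plan is to handle the three assertions separately, relying on the Dido lemma (Lemma~\ref{dido}) and on the combinatorial description of trace paths from Theorem~\ref{tracepaths}. For existence and uniqueness of the trace path, the first observation is that any trace path landing on $L$ is automatically a beeline: it has $L$-length one by construction and its endpoint has $L$-norm one, so it realizes the distance to the origin and is $L$-geodesic. The mechanism controlling \emph{which} trace paths reach $L$ is the equality case of the triangle inequality for the crystalline norm $\normL{\cdot}$. Writing the endpoint as $\frac{\iota(T)-\iota(t)}{T-t}$, a nonnegative combination of the successive side-directions of $I_1$ swept out by the arc, I would note that this combination has $L$-norm one exactly when all the directions involved lie in the cone over a single edge of $Q$ --- that is, when the arc spans at most two adjacent sides of $I_1$. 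Since consecutive sides of $I$ point along consecutive vertex directions $\V_i$ of $Q$, the trace paths reaching $L$ are precisely the one-sided paths hitting the vertices $\V_i$ and the two-sided paths of shape $\V_i$-then-$\V_{i+1}$ hitting the open edges of $L$. This is exactly the content of the degenerate quadrilaterals $Q_{ii}$ and $Q_{i,i+1}$ already analyzed in Theorem~\ref{tracepaths}, which shows they cover all of $L$; since each point of $L$ arises from one such shape with its split uniquely determined, the trace path to a given $(x,y)\in L$ is unique.

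For nonnegativity of the enclosed area, Lemma~\ref{dido} identifies this trace path as the area maximizer, so its enclosed area is the balayage value $A(x,y)$. When $(x,y)\in L$ the straight chord from the origin is a competing path of $L$-length $\normL{(x,y)}=1$ enclosing zero area, whence $A(x,y)\ge 0$; equivalently, the counterclockwise orientation of $\iota$ makes the enclosed area manifestly nonnegative. Concretely $A=0$ at a vertex, where the path is straight, and $A=\tfrac12 a(1-a)\det(\V_i,\V_{i+1})>0$ at the interior edge point $a\V_i+(1-a)\V_{i+1}$.

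For the range of beeline areas, the upper bound is automatic: beelines are length-one paths to $(x,y)$, so by the definition of the balayage function each encloses area at most $A$. To realize every value in $[-A,A]$, I would use the crystalline geodesic characterization --- for $(x,y)=a\V_i+(1-a)\V_{i+1}$ interior to an edge, a path is a beeline precisely when it is monotone in the cone generated by $\V_i$ and $\V_{i+1}$, since then its $L$-length equals the sum of its $\V_i$- and $\V_{i+1}$-displacements and so it reaches $(x,y)$ with length one. Parametrizing such paths by their cumulative $(\V_i,\V_{i+1})$-coordinates, which run monotonically from $(0,0)$ to $(a,1-a)$, the enclosed signed area equals $\det(\V_i,\V_{i+1})$ times the planar area between this coordinate path and its chord. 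The two extreme staircases --- all of $\V_i$ before $\V_{i+1}$ (the trace path), and all of $\V_{i+1}$ before $\V_i$ (its reverse) --- give areas $+A$ and $-A$; taking the straight-line homotopy between them in coordinate space keeps every intermediate path monotone, hence a beeline, while the area varies continuously, so the intermediate value theorem furnishes a beeline for each value in $[-A,A]$. The main obstacle is exactly this geodesic characterization: one must verify that every monotone path in the two-vertex cone is genuinely $L$-geodesic, so that the interpolating family consists of honest beelines. This is the same equality case of the triangle inequality for the crystalline norm exploited in the first step, and once it is in place the remaining work --- computing the two extreme areas and invoking continuity --- is routine.
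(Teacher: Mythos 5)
Your proof is correct and follows essentially the same route as the paper's: identify the trace path to an edge point as the two-sided path ($a$ along $\V_i$, then $1-a$ along $\V_{i+1}$, via the classification in Theorem~\ref{tracepaths}), then join it to its reversal through a continuous family of length-one paths terminating on $L$ --- hence automatically $L$-geodesics --- and invoke continuity of enclosed area to realize every value in $[-A,A]$. The only cosmetic difference is the interpolating family (the paper uses the explicit three-sided paths $s\V_i$, $(1-a)\V_{i+1}$, $(a-s)\V_i$ rather than your convex-combination homotopy of monotone staircases), and the machinery you develop to certify geodesity --- the equality case of the triangle inequality and the monotone-cone characterization of beelines --- is exactly what the paper asserts without proof in the remark immediately following the lemma.
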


\begin{proof}
If $(x,y)$ is in the edge of $L$ with extreme points $\V_i$ and $\V_{i+1}$, then the trace path to
$(x,y)$
follows two directions, first going distance $a$ in direction $\V_i$ and then distance $(1-a)$
in direction $\V_{i+1}$.
Now vary $s$ from $0$ to $a$ and consider the beeline that goes $s$ in direction $\V_i$
followed by $(1-a)$ in direction $\V_{i+1}$ and finally $(a-s)$ in direction $\V_i$.  This is 
clearly still an $L$-geodesic, since it has length one and terminates on $L$.  But the area it 
encloses with the straight chord between its endpoints varies continuously from $-A$ when $s=0$
to $A$ when $s=a$.
\end{proof}

In general, the full set of beelines to a side with extreme points $\V_i$ and $\V_{i+1}$ 
consists of  parametrizations by arclength of
arbitrary piecewise differentiable paths whose tangent vectors 
lie in the interval of directions between
those two extremes.

\begin{lemma}[Isoarea problem]\label{isoarea}
A curve in $Q$ of $L$-length $1$
has minimal length among all curves with the same endpoints
and balayage area if and only if it is a trace path or a beeline.
\end{lemma}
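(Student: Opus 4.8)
The plan is to treat this lemma as the dual of the Dido problem (Lemma~\ref{dido}) and prove both implications. By the central symmetry of $Q$, reflecting a curve through $\zero$ preserves $L$-length and endpoints while negating signed balayage area, so it suffices to consider target areas $A_0\ge 0$. Throughout I will use the elementary bound that any path from $\zero$ to a point $\P$ has $L$-length at least $\normL{\P}$, with equality exactly for $L$-geodesics. The beeline half of the ``if'' direction is then immediate: a beeline terminates at a point $\P\in L$, so $\normL{\P}=1$ and its length $1$ is already minimal among all paths to $\P$, regardless of enclosed area.

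The crux is the trace-path case: a trace path $\trace_{t,T}$ to $\P$, enclosing the maximal area $A(\P)$, must minimize length among all paths to $\P$ enclosing that same area. I would reuse the completion from the proof of Lemma~\ref{dido}: extending the parameter to $s\in[0,\lambda]$ with $\lambda=1/(T-t)\ge 1$ closes the trace path up to a full scaled isoperimetrix $\lambda I_1$ of $L$-perimeter $\lambda$, so the completion arc $\beta$ from $\P$ back to $\zero$ has length $\lambda-1$, and the chord from $\zero$ to $\P$ cuts $\lambda I_1$ into the trace-path cap of area $A(\P)$ and a complementary cap whose area I denote $a_\lambda-A(\P)$, where $a_\lambda$ is the total area of $\lambda I_1$. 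Given any competitor $\gamma$ from $\zero$ to $\P$ of length $\ell$ enclosing the same signed area $A(\P)$, I would glue $\gamma$ to $\beta$ into a closed loop $C=\gamma\ast\beta$; replacing the trace-path cap of $\lambda I_1$ by the equal-area cap of $\gamma$ shows that $C$ encloses area exactly $a_\lambda$. By Busemann's theorem the minimal $L$-perimeter of a closed loop enclosing area $a_\lambda$ is realized by $\lambda I_1$ and equals $\lambda$, whence $\ell+(\lambda-1)=\mathrm{length}(C)\ge\lambda$, i.e.\ $\ell\ge 1$. I expect the main obstacle to lie precisely here, in the orientation and signed-area bookkeeping of the gluing, together with the routine check that a maximizing competitor may be taken convex and on the same side of the chord so that the cap areas add as claimed.

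For the ``only if'' direction, suppose a length-$1$ curve $\gamma$ to $\P$ with area $A_0\ge 0$ is length-minimizing. If $\normL{\P}=1$, then $\gamma$ is an $L$-geodesic to a point of $L$; since a geodesic to a point in the relative interior of the edge $[\V_i,\V_{i+1}]$ of $L$ must have all tangent directions in the cone between $\V_i$ and $\V_{i+1}$, $\gamma$ is a beeline. If instead $\normL{\P}<1$, I would first force $A_0=A(\P)$. By rescaling and Lemma~\ref{dido}, the maximal area enclosed by a length-$\ell$ path to $\P$ equals $h(\ell):=\ell^2 A(\P/\ell)$, a continuous function with $h(1)=A(\P)$; moreover paths of length at most $\ell$ to $\P$ realize every area in $[0,h(\ell)]$. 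Hence if $A_0<A(\P)$, continuity of $h$ and the intermediate value theorem (together with the scaled-beeline case when $A_0\le h(\normL{\P})$) produce a path of length strictly less than $1$ to $\P$ enclosing area $A_0$, contradicting minimality. Therefore $A_0=A(\P)$, and the uniqueness clause of Lemma~\ref{dido} forces $\gamma$ to be the trace path to $\P$. The only delicate point remaining is the continuity of the balayage function $A$ along rays, which I would deduce from the explicit piecewise description of trace paths in Theorem~\ref{tracepaths}.
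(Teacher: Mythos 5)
Your proof is correct, but it routes both implications differently from the paper. For the ``if'' direction on trace paths, the paper simply cites Lemma~\ref{dido} (``trace paths uniquely maximize area''), leaving implicit the step from area-maximality to length-minimality; you instead close the trace path up with its completion arc along $\lambda I_1$ and apply Busemann's inequality to the glued loop, getting $\ell+(\lambda-1)\ge\lambda$ directly. That is exactly the quantitative bridge the paper's one-liner omits, and your worry about the bookkeeping is unfounded: inserting the chord and its reverse at $\P$ makes the signed (Green's integral) areas add with no convexity assumption on the competitor, though you do need the isoperimetric inequality for \emph{signed} area of possibly non-simple closed curves --- standard, by decomposing the loop into simple subloops. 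For the ``only if'' direction, both you and the paper reduce to showing the enclosed area must equal $A(\P)$ and then invoke the uniqueness clause of Lemma~\ref{dido}; the difference is how the shorter competitor is produced when $A_0<A(\P)$. The paper shortcuts the convex region bounded by the trace path along a chord, while you dilate, running the intermediate value theorem on $h(\ell)=\ell^2 A(\P/\ell)$ and falling back on scaled beelines for small areas. Your version costs continuity of $A$ (correctly sourced from the trace-path description, and proved independently in Theorem~\ref{limitshapebypanels}), but it buys robustness: in a polygonal norm a chord cutting a single corner of a trace path has the \emph{same} $L$-length as the arc it replaces, since the norm is additive on the cone spanned by two adjacent vertex directions of $L$, so the paper's chord must in fact be chosen to span non-adjacent sides to achieve a strict shortening --- a subtlety your scaling argument sidesteps entirely.
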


\begin{proof}
The reverse implication is easy: we have already shown that trace
paths uniquely maximize area to their endpoints.  Since beelines
are $L$-norm geodesics, there is no shorter path between their endpoints.

Now for the forward implication:
Let $\beta$ be a curve in $Q$ of $L$-length $1$ which has minimal
length among all curves with the same endpoints and balayage
area.  If $\beta$ ends on $L$, then it is a beeline by definition.
Otherwise say $\beta$ ends at $(x,y)\in Q^\circ$ and has balayage
area $A'$.  We know that there is a trace path $\gamma=\trace_{t,T}$ with
$\trace_{t,T}(1)=(x,y)$ enclosing area $A(x,y)\ge A'$.
If $A(x,y)=A'$, then $\beta$ is a trace path by Lemma~\ref{dido}.
If not, then we can modify $\gamma$ to be an improvement on $\beta$:  
we know that the region enclosed by $\gamma$ is convex, so we can find a chord of
this body starting and ending on $\gamma$ so that the modified curve $\gamma'$,
which follows $\gamma$ except for a shortcut along this chord, has length $<1$ and encloses area $A'$.
This contradicts the minimality of the length of $\beta$.
\end{proof}

%%%%%%
\section{Structure of polygonal \CC metrics}\label{structure}
\subsection{Unit sphere and \CC geodesics}

Returning to the Heisenberg geometry, we find that we have
identified the geodesics, and can thus map the shape of spheres.

Recall that Pansu's theorem tells us that the word metric $(H(\Z),S)$ is asymptotic to the \CC
norm on $H(\R)$ induced by $L$, the boundary of the convex hull of $\pi(S)$.
Note that the number of sides of $L$ is at most $|S|$, but could be smaller if $S$ has several
elements with the same projection to $\M$, or has elements which do not project to vertices of $L$.
If $L$ is a $2N$-gon, then there are $4N^2-4N$ quadrilateral regions $Q_{ij}$ classifying the shapes
of trace paths, while the $2N$ sides of $L$ classify the beelines.

\begin{Thm}[Structure Theorem]
\label{limitshapebypanels}\label{quadratic}  Consider the \CC metric induced 
on $H(\R)$ by a  polygonal norm $\normL\cdot$ on $\M$.
The balayage function $A(x,y)$ is a continuous function over $Q$
whose restriction to each $Q_{ij}$ is a quadratic polynomial in $x,y$.

Let $\z(x,y)$ be the multivalued function given by 
$$\z=\begin{cases} \{\pm A\}, & (x,y)\in Q^\circ \\ [-A,A], & (x,y)\in L. \end{cases}$$
Then the graph of $\z$  is precisely the unit sphere $\eS$ in the \CC metric.

Equivalently: the full set of \CC geodesics of length one based at the origin is identical to 
the set of admissible lifts of trace paths and beelines (and their reflections in the origin).
\end{Thm}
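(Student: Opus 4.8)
The plan is to obtain the sphere description and the geodesic description together, as essentially immediate consequences of the geodesy criterion attached to Lemma~\ref{balayage} and of the isoarea dichotomy in Lemma~\ref{isoarea}, and then to prove the quadratic-polynomial statement by a separate, self-contained computation on each quadrilateral $Q_{ij}$.

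First I would restate the geodesy criterion in the form: the \CC distance from $\zero$ to $(x,y,z)$ equals the minimal $L$-length of a plane path from $(0,0)$ to $(x,y)$ whose balayage area is $z$; thus $\eS$ is exactly the set of $(x,y,z)$ for which this minimum equals $1$. A length-$1$ plane path stays inside $Q$, and by Theorem~\ref{tracepaths} every point of $Q$ is the endpoint of a (length-$1$) trace path, so $\eS$ lies over $Q$, matching the domain of $\z$. I would then read off each fiber using Lemma~\ref{isoarea}, which says that the length-$1$ paths that are length-minimal for their endpoint and area are precisely the trace paths and beelines. Over an interior point $(x,y)\in Q^\circ$ the only minimizer is the trace path, which by Lemma~\ref{dido} attains the maximal area $A(x,y)$; any length-$1$ path reaching $(x,y)$ with $|z|<A$ is a non-minimizer, so those points lie strictly inside the ball. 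Applying the central inversion $g\mapsto g^{-1}$---an isometry because $L$ is centrally symmetric---produces the reflected geodesics and puts the value $-A(x,y)$ on the sphere as well, so the interior fibers are exactly $\{\pm A\}$. Over a boundary point $(x,y)\in L$, the existence-of-beelines lemma supplies beelines realizing every area in $[-A,A]$, and beelines are $L$-geodesics, hence minimizers, so the whole interval $[-A,A]$ lies on the sphere. This is precisely the graph of $\z$. Since the length-$1$ \CC geodesics based at $\zero$ are exactly the admissible lifts of these length-$1$ plane minimizers, the same application of Lemma~\ref{isoarea} yields the final ``equivalently'' clause.

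For the quadratic statement I would fix $Q_{ij}$ and use Theorem~\ref{tracepaths}: each $(x,y)\in Q_{ij}$ is the endpoint of a trace path $\trace_{t,T}$ of constant combinatorial shape, and $A(x,y)$ is the area it encloses with the chord to $(x,y)$. Computing this area as $\tfrac12\int(\gamma_1\,d\gamma_2-\gamma_2\,d\gamma_1)$ along $\trace_{t,T}$ and using that $\iota$ is affine on each side, I would bring $A$ to the form $\tilde A(t,T)/(T-t)^2$, where $\tilde A$ is the area between the isoperimetrix arc from $\iota(t)$ to $\iota(T)$ and its chord. The first key observation is that $\tilde A$ is \emph{bilinear} in $(t,T)$: in the shoelace expansion the only degree-two term comes from the single closing edge $\iota(T)\to\iota(t)$, which is bilinear, while the remaining vertices are fixed and the two moving vertices $\iota(t),\iota(T)$ enter affinely, so $\tilde A\in\mathrm{span}\{1,t,T,tT\}$. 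The second is the change of variables coming from $(x,y)=(\iota(T)-\iota(t))/(T-t)$: solving the resulting linear system by Cramer's rule gives $t=L_t/\Delta$ and $T=L_T/\Delta$ with $L_t,L_T,\Delta$ all affine in $(x,y)$ (the $xy$-terms cancel in $\Delta$), and a short computation shows that $L_T-L_t$ is in fact a \emph{constant} $C$, so that $T-t=C/\Delta$. Substituting the bilinear $\tilde A$ and this relation into $A=\tilde A/(T-t)^2$, the factors $\Delta^2$ cancel against $C^2$ and leave a genuine quadratic polynomial in $(x,y)$. Continuity of $A$ across the shared edges of adjacent quadrilaterals then follows because the trace path, and hence its enclosed area, varies continuously as the endpoint moves from one combinatorial region into the next; together with the covering property of Theorem~\ref{tracepaths} this gives a continuous $A$ on all of $Q$.

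I expect the genuine obstacle to be precisely this polynomiality, rather than the sphere and geodesic descriptions, which are formal once Lemmas~\ref{dido} and \ref{isoarea} are in place. A priori $A$ is only a ratio of a quadratic by a quadratic in $(x,y)$, and its being an honest polynomial rests on the two cancellations above---the absence of $t^2$ and $T^2$ in $\tilde A$, and the constancy of $L_T-L_t$. Establishing both cleanly requires careful bookkeeping with the affine parametrizations $\iota(u)=u\A+\B$ of the sides of the isoperimetrix and with the shoelace sum; everything else (the count of quadrilaterals, their covering $Q$, and the continuity) is inherited directly from Theorem~\ref{tracepaths}.
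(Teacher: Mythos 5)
Your proposal is correct. The sphere and geodesic halves are essentially the paper's own argument: both rest on the geodesy criterion together with Lemmas~\ref{dido} and \ref{isoarea} and the existence of beelines, with the reflected hemisphere coming from central symmetry (the paper traverses the isoperimetrix clockwise to get minimal signed area $-A(-x,-y)=-A(x,y)$; your inversion $g\mapsto g^{-1}$ is an equivalent device). Where you genuinely differ is in proving that $A$ restricts to a quadratic polynomial on each $Q_{ij}$. The paper changes variables: it parametrizes the trace path by scalars $s_1,s,s_2$ (the partial first side, the common scale of the full middle sides, the partial last side), so that $(x,y)=s_1\W_i+s(\W_{i+1}+\cdots+\W_{j-1})+s_2\W_j$; the arclength-one condition and the two displacement equations then form a linear system with \emph{constant} coefficients in the unknowns $(s_1,s,s_2)$, so these are affine in $(x,y)$ with no cancellation required, and the enclosed area is visibly quadratic in them (a fixed-shape polygon scaled by $s$ plus two triangles of areas proportional to $s_1 s$ and $s_2 s$). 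You instead keep the $(t,T)$ parametrization, in which the unknowns enter the system with coefficients that themselves depend on $(x,y)$, and so you must verify two cancellations: the absence of $t^2$ and $T^2$ terms in the shoelace expansion of your $\tilde A$ (true, since only the closing edge from $\iota(T)$ to $\iota(t)$ involves both moving vertices), and the constancy of $L_T-L_t$ in the Cramer solution, so that $(T-t)^2=C^2/\Delta^2$ cancels the $\Delta^2$ denominators (also true: the $xy$-terms in the determinant $\Delta$ cancel, and the difference of the two Cramer numerators is independent of $(x,y)$). Both routes give the same rational quadratic polynomials; the paper's choice of variables buys transparency, since polynomiality is immediate from a constant-coefficient linear solve, while yours buys explicit formulas in the natural trace-path parameters at the price of exactly the bookkeeping you identify as the crux.
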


\begin{proof}
For points $(x,y,z)\in H(\R)$ with $z\ge 0$ and $(x,y)\in Q^\circ$,
saying that an admissible path $\alpha$ from $\zero$ to $(x,y,z)$ is geodesic
means that $\pi(\alpha)$ is the shortest path in $\M$ from $(0,0)$
to $(x,y)$ enclosing (nonnegative) balayage area $z$.
By central symmetry of the isoperimetrix, we can solve the problem for minimal signed area
by traversing the isoperimetrix clockwise rather than counterclockwise.  Therefore
 the minimal
signed area enclosed by a path of length 1 from the origin to
$(x,y)$ is given by  $-A(-x,-y)=-A(x,y)$, which tells us that the graph of $\z$ is symmetric over the $xy$ plane.

Continuity of $A(x,y)$ on $Q$ is 
is immediate from the continuity of $\trace(t,T)$ in its parameters.
It only remains to show that the restriction of $A(x,y)$ to each $Q_{ij}$ is a 
quadratic polynomial.

Observe that the problem of finding 
the trace path to a point $(x,y)\in Q_{ij}$ amounts to finding
scalars $s_1,s,s_2$ (with $s_1,s_2\le s$) such that
$$(x,y)=s_1\W_i + s (\W_{i+1}+\cdots+\W_{j-1}) + s_2 \W_j,$$ where $\W_i$ are the vectors
defining the sides of $I$, as in
Figure~\ref{area}. 

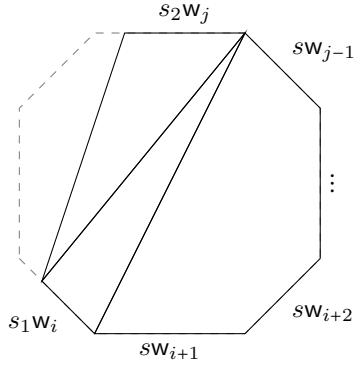
\begin{figure}[ht]
\begin{tikzpicture}
\coordinate (start) at (-1.7,-1.3); 
\coordinate (stop) at (-.6,2);
\draw [dashed,gray] (-2,-1)--(-1,-2)--(1,-2)--(2,-1)--(2,1)--(1,2)--(-1,2)--(-2,1)--cycle;
\draw (-1,-2)-- node [below] {$s\W_{i+1}$} (1,-2)-- 
node [below right] {$s\W_{i+2}$} (2,-1)-- node [right] {$\vdots$} (2,1)-- node [above right] {$s\W_{j-1}$} (1,2)--cycle;
\draw (start)-- node [below left] {$s_1\W_i$} (-1,-2)--(1,2)--cycle;
\draw (1,2)-- node [above] {$s_2\W_j$}  (stop)--(start)--cycle;
\end{tikzpicture}
\caption{The balayage area of a path that follows along a scaled copy of the isoperimetrix
is given by a quadratic expression in $s_1,s_2,s$. \label{area}}
\end{figure}

The area enclosed is equal
to that of a polygon of fixed shape with $j-i$ sides of length proportional to $s$,
so that its area is proportional to $s^2$,
plus two triangles of area proportional to $s_1\cdot s$ and $s_2\cdot s$, respectively.
There are three linear equations relating $s_1,s_2,s$, given by the total arclength equaling one
and the Euclidean displacement between endpoints equaling $x$ and $y$ in the  
horizontal and vertical direction, respectively.  Thus we can solve to get expressions for $s_1,s_2,s$ that are 
linear in $x,y$; this means that the area is quadratic in $x,y$, as required.
\end{proof}

By the definition of a \CC metric, the sphere of radius $t$ based at $\zero$ is precisely $\delta_t\eS$
(so $\eS$ is homeomorphic to a $2$-sphere), 
and the spheres 
with other centers are obtained by pushing these spheres around by left-multiplication in $H(\R)$.

We want to break down the sphere, which is the graph of $\z$ over $Q$, into the graphs over the 
various quadrilaterals $Q_{ij}$.  We will call these the {\em panels} of the sphere, and define them by 
$$\panel_{ij}:=\left\{(x,y,z) : \quad (x,y)\in Q_{ij}, \ \ z=\pm A(x,y)\right\}, \qquad j\neq i+1,$$
and $$\panel_{i,i+1}:=\left\{ (x,y,z) : \quad (x,y)\in Q_{i,i+1}, \ \  z\in [-A,A]\right\},$$
so that $\eS = \bigcup_{1\le i,j \le 2N} \panel_{ij}$.
Note that we can leave out the $\panel_{ii}$
without changing this equality, 
because they are contained in the union
of the other panels.
The $\panel_{i,i+1}$ project to edges of $L$, and the fact that $\z$ is quadratic
over the interior implies that they are bounded above and below 
by parabolas in a plane through that edge that is 
perpendicular to $\M$.  We will call these the \textit{side panels}
of $\eS$ (these are the panels cut away in Figure~\ref{twoviews}).

The combinatorics of trace paths on quadrilaterals dictates the shape of \CC geodesics in the 
following straightforward way.  Let the {\em footprint map} $\Foot: H(\R)\to Q$ be defined by 
$$\Foot(\X) :=  \pi \circ \delta_{\nicefrac 1{d}} (\X)=\delta_{\nicefrac 1d}\circ\pi(\X),$$
where $d=\dCC(\X,\zero).$  Note that all geodesics from $\zero$ to any point are just dilations 
of geodesics from $\zero$ to $\eS$.  These in turn are lifts of trace paths and beelines, whose 
shape is determined by which quadrilateral contains the projection from $\eS$ to $Q$.  
That is, the shapes of all possible geodesics from $\zero$ to $\X$ are determined by which quadrilateral
contains $\Foot(\X)$.  

Let the {\em regular points} in the unit sphere be the union of
the panels over the quadrilaterals interior to $Q$
and the {\em unstable points}
be those in side panels.
$$\Sreg= \bigcup_{j\neq i+1} \panel_{ij} \quad ;
\qquad \Suns = \bigcup \panel_{i,i+1} \ ,$$
so that $\eS=\Sreg\cup\Suns$.
Recall that the points of $Q^\circ$ are reached only by trace paths, while points of $L$ are 
reached by families of beelines.
We have $$\Delta \Sreg = \overline{\Foot^{-1}(Q^\circ)} \quad ; \qquad \Delta \Suns = \Foot^{-1}(L).$$
That is, the regular/unstable distinction divides the space $H(\R)$ according to which of the two
types of geodesic reaches each point.

\begin{figure}[ht]
\mbox{\centerline{\includegraphics[width=2.4in]{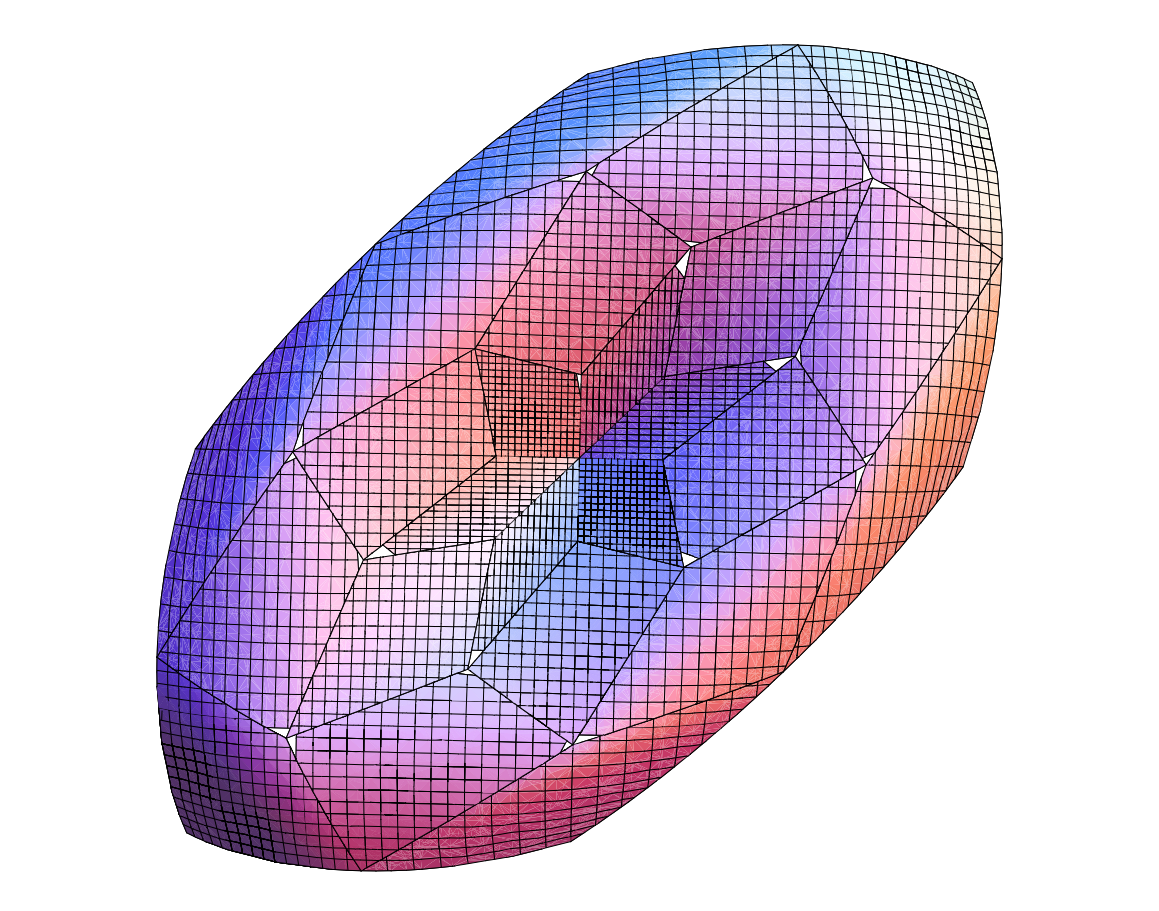}
\hspace{-.2in}\raisebox{.12in}{\begin{tikzpicture}[scale=2]
\foreach \ang in {0,180}
{\begin{scope}[rotate=\ang]
\draw (1,0)--(1,1)--(0,1)--(-1,0);
\draw (-1/2,1/2)--(0,2/3)--(1/2,1)--(2/3,2/3)--(1,1/2)--(2/3,0)--(1/2,-1/2);
\draw (-2/3,0)--(-1/4,1/4)--(0,2/3)--(1/4,1/2)--(2/3,2/3)--(1/2,1/4)--(2/3,0);
\draw (-1/5,0)--(-1/4,1/4)--(0,1/5)--(1/4,1/2)--(1/5,1/5)--(1/2,1/4)--(1/5,0);
\draw (0,1/5)--(0,0)--(1/5,1/5);
\draw (0,0)--(1/5,0);
\end{scope}}
\end{tikzpicture}}
\includegraphics[width=2.5in]{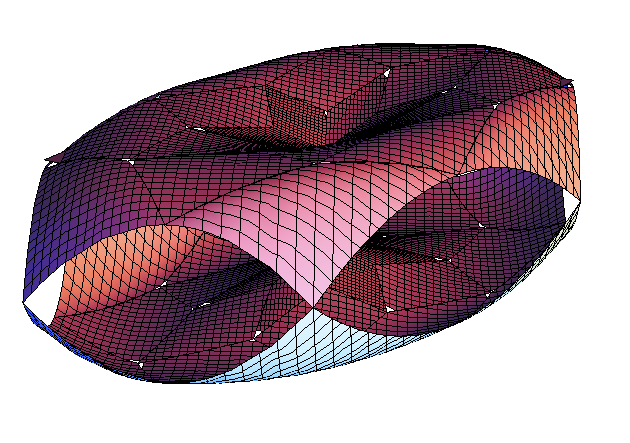}}}
\caption{Two views of the limit shape $\eS$ for the hexagonal generators, with the quadrilateral decomposition 
of the footprint $Q$ reproduced for reference.
Only the regular part, $\Sreg$, is shown.  The unstable part, $\Suns$, is cut away in the picture:
it is made up of six flat panels perpendicular to the $xy$-plane 
that are bounded by parabolas.\label{twoviews}}
\end{figure}

Let the {\em volume subtended by a panel} be the volume of the region obtained by 
coning off to the origin by dilation.
We will see that the the decomposition of the sphere into regular and unstable points
gives us useful invariants coming from volume:
$$\Vreg=\vol(\Sreghat) \quad ; \qquad \Vuns=\vol(\Sunshat).$$
Recalling that $\VS$ is the volume of the unit ball, we have $\VS=\Vreg+\Vuns$.

\begin{remark}[Rationality]
We note that Breuillard carried out a full description for the standard generators of the kind
given in this section,
and indicated key elements of such a description for general $S$, in \cite[Prop 9.1]{breuillard}.
For instance, it is stated without argument there that the balayage function should be piecewise quadratic.  
Breuillard sketches an argument that $\VS$ (the volume of the unit ball for any
limit metric) is rational,
noting that $Q$ has integer vertices, so  its polar dual and therefore 
the isoperimetrix $I$ must have rational vertices (since $Q^*=\{\X\in\M : \ \X\cdot\Y\le 1 \quad  \forall \Y\in Q\}$).
Following this line in our language, we see that
the vectors $\W_i$ have rational projections to each coordinate direction as well as rational
length in the $L$-norm (though not in the Euclidean norm), and so the 
linear relations between $x,y$ and $s_1,s_2,s$ described in the
proof of Theorem~\ref{limitshapebypanels}
are rational, which ensures that the 
balayage area over $Q_{ij}$ 
is a rational quadratic polynomial in $x$ and $y$.  Furthermore, the vertices 
of the quadrilaterals $Q_{ij}$ are rational  (when a trace path begins and ends at a vertex,
and has arclength 1, then each coordinate of the endpoint is a sum of rational numbers, scaled
by a rational number).  Thus the volume subtended by each panel is given by an integral
over a region bounded entirely by graphs of rational quadratic polynomials (both the balayage
area and the tracks of the dilation) over rational polygonal domains, so each such 
volume is in fact rational.
\end{remark}

\begin{remark}[Comparing metrics]
Notice the following fact that easily follows from Pansu's theorem 
from this geometric description of  limit shapes.
For an arbitrary generating set $S$ of $H(\Z)$, despite the fact that the word-length of generators is one by 
definition and the \CC metric is the limit of the rescaled word metrics, it is easily possible for elements of $S$ to 
be artibrarily far from $\zero$ in \CC distance:
one can have $\dCC(\A,\zero)\gg 1=|\A|_S$ for some $\A\in S$, meaning that the generator lies far outside of $\eS$.
This is because $\dCC$ depends only on $\pi(S)$ and not on the $z$-coordinates of the generators.
However, if $\pi(\A)$ is not a vertex of $L$, we will eventually have $\dCC(\A^n,\zero) \le |\A^n|_S$
because the $z$-coordinate of $\A^n$ grows only linearly, while
the height of $\delta_n\eS$ over $\pi(\delta_n\A)$
grows quadratically in $n$.  It is only
over the vertices of $L$ that the height of $\eS$ is zero.  In that
case, we see  that $\dCC(\A^n,\zero)\le |\A^n|_S+1$
for sufficiently large $n$, because the dilation causes the curves defining the sphere near
its corners to get more and more vertical as the radius gets large.
This establishes that there is a constant 
$K=K(S)$ such that
$$\dCC(\X,\zero)  \  \le \   |\X|_S+K$$
for all $\X\in H(\Z)$, which is half of Krat's bounded difference theorem (Thm~\ref{bdd-diff}).
\end{remark}

\subsection{Nonuniqueness and regular points}

The union of the degenerate quadrilaterals has a geometric
significance.  Recall that the quadrilateral $Q_{ii}$ is a line segment based 
at the origin along with a vertex.
Let $\V_i'$ denote the other endpoint of the line segment emanating
from $\zero$.
The other degenerate
quadrilaterals cover the boundary $L$, whose vertices are called $\V_i$.
Then, with the convention that $Q_{2N,2N+1}=Q_{2N,1}$, let
$$\NN_0 :=  \bigcup_{1\le i\le 2N} Q_{ii} \setminus \{\V_i'\}
\quad ; \qquad L_0:=\bigcup_{1\le i\le 2N} Q_{i,i+1} \setminus \{\V_i\}=L\setminus \{\V_i\}.$$ 
Now define $\NN:=\NN_0\cup L_0$.

\begin{prop}[Uniqueness of \CC geodesics]\label{nonunique-geods}
A point  $\X\in\eS$ is reached by more than one geodesic of length
one based at the origin if and only if $\pi(\X)\in\NN$.

Thus a point $\X\in H(\R)$ is reached by more than one geodesic
based at the origin if and only if $\Foot(\X)\in\NN$.
\end{prop}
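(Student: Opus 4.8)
The plan is to reduce everything to the planar counting problem already solved in the preceding sections and then read off the answer from the combinatorics of trace paths and beelines. By definition of the \CC metric, a geodesic of length one from $\zero$ to a point $\X=(x,y,z)\in\eS$ is exactly an admissible lift of a path in $\M$ of $L$-length one from the origin to $(x,y)$ whose balayage area is the signed number $z$; distinct planar paths lift to distinct geodesics, so ``$\X$ is reached by more than one geodesic'' is equivalent to ``there is more than one length-one path in $\M$ to $(x,y)$ enclosing signed area $z$.'' By the isoarea lemma (Lemma~\ref{isoarea}) every such minimizing path is a trace path or a beeline, so I only need to count those. The second sentence of the Proposition then follows immediately from the first: since every geodesic from $\zero$ to $\X$ is the $\delta_d$-dilation of a geodesic from $\zero$ to $\delta_{1/d}\X\in\eS$, where $d=\dCC(\X,\zero)$, the two points have the same number of geodesics, and $\Foot(\X)=\pi(\delta_{1/d}\X)$ is precisely the projection of that sphere point.

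It remains to determine, for $\X\in\eS$, when the planar count exceeds one, splitting into the interior and boundary cases. If $(x,y)\in Q^\circ$, then no beeline reaches $(x,y)$, so the relevant paths are the area-maximizing trace paths and $z=\pm A(x,y)$; thus I must count trace-path images hitting $(x,y)$. By the combinatorics of trace paths (Theorem~\ref{tracepaths}) a point lying interior to a single nondegenerate quadrilateral $Q_{ij}$ is reached by exactly one trace path, hence by a unique geodesic. Multiplicity can therefore occur only over the degenerate quadrilaterals, and the heart of the interior case is the analysis of $Q_{ii}$: I will show that a point on the segment in direction $-\V_i$ at magnitude $\frac{a}{1-a}$ is reached by a whole one-parameter family of wrap-around trace paths, obtained by fixing $T-t=1-a$ and sliding the starting parameter along $\sigma_i$. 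These share the maximal balayage area (Lemma~\ref{dido}) but have genuinely different images, since they traverse different portions of $\sigma_i$ at their two ends, so they produce distinct geodesics; the family is nondegenerate exactly on $[0,\V_i')$ and collapses to a single path at the outer endpoint $\V_i'$.

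For the boundary case $(x,y)\in L$, the straight chord already has $L$-length one, so every length-one path to $(x,y)$ is a beeline with tangent directions confined to the cone spanned by the two adjacent vertices $\V_i,\V_{i+1}$. The existence-of-beelines lemma supplies, for each signed area strictly between $-A(x,y)$ and $A(x,y)$, an entire family of such paths, whence every point of a side panel over an open edge carries more than one geodesic. Collecting the two cases, the locus of multiplicity projects to the union of the interior segments $[0,\V_i')$ with the edges of $L$, which is the set $\NN=\NN_0\cup L_0$, completing the identification.

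The step I expect to be the main obstacle is the careful analysis over the degenerate quadrilaterals $Q_{ii}$: one must verify that the sliding-start wrap-around paths are truly distinct as geodesics rather than reparametrizations of a single image, and then pin down exactly where the family degenerates. The same care is needed to match the boundary behavior precisely to $\NN$, in particular to handle the transitional points where the families collapse to a single extremal path, namely the rims $z=\pm A$ of the side panels, the outer endpoints $\V_i'$, and the vertices $\V_i$, where a length-one path is forced to be the straight segment. Getting these corner and boundary cases right is what makes the non-uniqueness locus land on $\NN$ rather than on a slightly larger or smaller set.
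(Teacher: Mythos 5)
Your reduction (length-one geodesics correspond to planar paths of $L$-length one with prescribed balayage area, with the second sentence following by dilation) matches the paper, and your explicit sliding family of wrap-around paths over $Q_{ii}$ is actually more detailed than the paper's proof, which only points back to the discussion of $Q_{ii}$. But the uniqueness half of your interior case has a real gap. Theorem~\ref{tracepaths} does not say what you attribute to it: it says trace paths ending in one $Q_{ij}$ have the same \emph{shape} (same ordered list of directions), not that the endpoint map is injective; within-type uniqueness instead comes from the uniquely solvable linear system for $s_1,s,s_2$ in the proof of Theorem~\ref{limitshapebypanels}. More seriously, your ``multiplicity can therefore occur only over the degenerate quadrilaterals'' skips the points of $Q^\circ\setminus\NN$ that lie on shared boundaries of two nondegenerate quadrilaterals (the interior edges visible in Figure~\ref{combx}); these are neither interior to a single quadrilateral nor degenerate, yet the proposition asserts uniqueness there. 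The paper avoids both issues with one rigidity argument covering all trace-path endpoints simultaneously: two distinct trace paths to the same point would produce two parallel chords of $\lambda I_1$ of equal length subtending equal perimeter on each side, which is impossible in a convex polygon unless the chord lies in a side of $\lambda I_1$, i.e.\ unless one is in the $Q_{ii}$ wrap-around situation. If you keep your combinatorial route, you must add an argument at shared boundaries (for instance, that there the two combinatorial types degenerate to one common path).

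The ``transitional points'' you defer at the end are not a routine loose end, and in one case they cannot be handled the way you hope. At a rim point $\X=(x,y,\pm A(x,y))$ with $(x,y)\in L_0$, the geodesic is genuinely \emph{unique}: your beeline families realize only the areas strictly between $-A$ and $A$, and the extremal area $\pm A$ is achieved only by the two-segment trace path $a\V_i$ then $(1-a)\V_{i+1}$ (Lemma~\ref{dido}, together with the fact that the chord to a point of $L_0$ is not parallel to any side of $\lambda I_1$). Concretely, in the $\ell^1$ metric every length-one planar path to $(\nicefrac12,\nicefrac12)$ is a monotone staircase, and the only one with area $\nicefrac18$ is ``right $\nicefrac12$, then up $\nicefrac12$,'' so $(\nicefrac12,\nicefrac12,\nicefrac18)$ has exactly one geodesic even though its footprint lies in $\NN$. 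Thus the ``if'' direction of the proposition, read literally, fails at the rims of the side panels; the correct statement confines nonuniqueness over $L_0$ to $|z|<A(x,y)$, together with everything over $\NN_0$. (The paper's own proof glosses this too: its claim that geodesics over $L_0$ ``can always be perturbed'' is false for the extremal beeline.) Your write-up should say this explicitly rather than promise to make the rims fit; and note that even for $|z|<A$ you need slightly more than the existence lemma you cite, since its one-parameter family realizes each intermediate area only once---multiplicity there requires the perturbation argument the paper uses, or beelines with additional corners.
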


\begin{proof}
First, we consider the case of trace paths.  
It is impossible for a convex polygon (in this case $\lambda I_1$)
to have two different interior chords which are parallel, have the same 
length, and subtend the same perimeter on each side.  Thus the
only way this non-rigidity can occur is for the chord $\alpha$ to
be contained in a side of $\lambda I_1$.  But this is precisely the 
case when the start and end points of $\trace$ fall on the same side,
which was examined in the discussion of $Q_{ii}$.  The vertices of
$L$ have trace paths enclosing zero area, so the geodesics are
unique in that case.  Finally, if $\pi(\X)$ is interior to a side
of $L$, then geodesics are lifts of beelines that are not straight lines.
These can always be perturbed
to nearby curves with the same endpoints, length, and enclosed area.
\end{proof}

From now on we call $\NN$ the {\em nonuniqueness locus} in $Q$.

\begin{Thm}[Probability of unique geodesics]
In $H(\R)$ with the \CC metric induced by a polygon $L$, 
the asymptotic density of unique geodesics equals $\Vreg/\VS$.

That is, choose $\X$ uniformly in the ball of radius $r$ and consider the probability
that there is only one geodesic from $\zero$ to $\X$.  For every $r$, this probability
equals $\Vreg/\VS$.
\end{Thm}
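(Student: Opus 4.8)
The plan is to combine the pointwise characterization of nonuniqueness from Proposition~\ref{nonunique-geods} with the cone structure of $H(\R)$ under the dilation $\delta_t$. Write $U=\{\X\in H(\R):\Foot(\X)\notin\NN\}$ for the set of points reached by a unique geodesic from $\zero$; by the Proposition this is exactly the complement of $\Foot^{-1}(\NN)$. The first thing I would record is that $\Foot$ is dilation-invariant. Indeed, since $\dCC(\delta_t\X,\zero)=t\,\dCC(\X,\zero)$, writing $d=\dCC(\X,\zero)$ we get
$$\Foot(\delta_t\X)=\pi\circ\delta_{1/(td)}(\delta_t\X)=\pi\circ\delta_{1/d}(\X)=\Foot(\X).$$
Consequently $U$ is a union of dilation rays, i.e. $\delta_t(U)=U$ for every $t>0$.

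Next I would use this invariance to obtain the exact (not merely asymptotic) statement. Because $B_r=\delta_r(B_1)$ and $\delta_r$ is a bijection fixing $U$ setwise, we have $B_r\cap U=\delta_r(B_1\cap U)$, and since the dilation $\delta_r$ scales Lebesgue measure by the factor $r^4$, both $\vol(B_r\cap U)$ and $\vol(B_r)=\VS\cdot r^4$ acquire the same factor. Hence
$$\frac{\vol(B_r\cap U)}{\vol(B_r)}=\frac{\vol(B_1\cap U)}{\VS}$$
for every $r$, and it remains only to evaluate $\vol(B_1\cap U)$. This is where the exactness comes from for free: no limiting argument is needed, the cone structure alone forces the ratio to be constant in $r$.

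To compute $\vol(B_1\cap U)$ I would split the nonuniqueness locus as $\NN=\NN_0\cup L_0$ and treat the pieces separately. The piece $\NN_0=\bigcup_i Q_{ii}\setminus\{\V_i'\}$ is a finite union of line segments lying in $Q^\circ$; over each of these interior points the sphere consists only of the two points $z=\pm A$, so $\Foot^{-1}(\NN_0)$ is a finite union of two-dimensional cones and therefore has measure zero. The piece $L_0=L\setminus\{\V_i\}$ differs from all of $L$ only in the finitely many vertices, and by the structure of the panels established after Theorem~\ref{limitshapebypanels} we have $\Foot^{-1}(L)=\Delta\Suns$; thus $\Foot^{-1}(L_0)$ agrees with $\Delta\Suns$ up to the measure-zero rays over the vertices. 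Intersecting with the unit ball gives $B_1\cap\Delta\Suns=\Sunshat$, whose volume is $\Vuns$ by definition. Combining the two pieces, $\vol(B_1\cap\Foot^{-1}(\NN))=\Vuns$, so $\vol(B_1\cap U)=\VS-\Vuns=\Vreg$, and the density equals $\Vreg/\VS$.

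The routine parts are the two measure-zero bookkeeping claims, but the point I regard as the genuine crux—and what makes the answer nontrivial rather than $1$—is the dimension count: although $\NN$ is only a one-dimensional subset of $Q$, its preimage under $\Foot$ is \emph{not} of measure zero, because the boundary piece $L_0\subset\partial Q$ lifts through the side panels $\Suns$ to a full three-dimensional region $\Delta\Suns$ in $H(\R)$. The only genuinely geometric input is precisely the identification $\Foot^{-1}(L)=\Delta\Suns$ from the Structure Theorem, which pins the non-unique locus inside $B_1$ to be exactly the unstable cone $\Sunshat$ up to measure zero.
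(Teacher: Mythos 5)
Your proposal is correct and follows essentially the same route as the paper's own proof: invoke Proposition~\ref{nonunique-geods} to identify the non-unique locus with $\Foot^{-1}(\NN)$, discard $\Foot^{-1}(\NN_0)$ and the vertex rays as measure zero, and identify what remains with $\Delta\Suns$, whose intersection with the unit ball is $\Sunshat$ of volume $\Vuns$. The only difference is that you spell out the dilation-invariance of $\Foot$ and the $r^4$ scaling of volume to justify exactness for every $r$, a step the paper leaves implicit in the cone structure.
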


\begin{proof}
The only instances of nonuniqueness occur over $\NN$, as we have seen. 
But $\Foot^{-1}(\NN_0)$ has no volume, and none of the volume of $\Delta\Suns$ is contributed
by the vertices.
Thus the probability of being reached by distinct geodesics 
is precisely the proportion of the volume of the unit ball that is in the unstable part
(i.e., subtended by the side panels).
\end{proof}

\begin{example}[Volume calculations in $\ell^1$ norm]
Let $\|\cdot\|_L$ be the $\ell^1$ norm on $\M$ and give $H(\R)$
the corresponding \CC metric.
We find that the volume subtended by panels with  4-sided combinatorics is $13/216$, the volume 
subtended by  
3-sided panels is $11/54$, and the volume subtended by the side panels is $1/6$. 
Thus $\Vreg=13/216+11/54= 19/72$ and $\Vuns=1/6$.
This adds up to 
give the total volume of the unit ball as 
$\VS=31/72$, which agrees with calculations by Breuillard \cite{breuillard}
 and Stoll  \cite{stoll1} of the volume growth. 
%So the three types make up about $14\%$, $47\%$, and $39\%$ of the measure, respectively. 

Thus for this standard polygonal \CC metric, the density of points reached by nonunique geodesics is 
$12/31$, or about $39\%$, versus $19/31\approx 61\%$ for unique geodesics.
\end{example}

\section{Geodesics in the word metric}\label{instability}

\subsection{Comparison of word and \CC geodesics}

Recall that the \textit{Hausdorff distance} $\HausD(\alpha,\beta)$
between a pair of sets is the smallest $\epsilon\ge 0$ such that
each set lies in the $\epsilon$-neighborhood of the other.  Below, we will talk
about paths and the images of those paths interchangeably, to make sense 
of the Hausdorff distance between paths.
The goal of this section is make precise the following statement:
``\CC geodesics approximate word geodesics."

\begin{lemma}[Continuity of \CC geodesics]
\label{cont-geod}
Fix an arbitrary $\rho>0$.
Suppose we are given $\X\in\eS$ such that $\pi(\X) \notin \eN_\rho(\NN_0)$
and a \CC geodesic $\alpha$ from $\zero$ to $\X$.
Then for every $\epsilon>0$ there exists  $\delta>0$
such that whenever $\Y\in\eS$ with $\dCC(\X,\Y)<\delta$,
there exists a \CC geodesic $\beta$ from $\zero$ to $\Y$
with $\HausD(\alpha,\beta)<\epsilon$.
\end{lemma}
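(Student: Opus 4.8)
The plan is to parametrize every geodesic by $L$-arclength as a map $[0,1]\to H(\R)$ and to reduce the Hausdorff bound to a uniform ($C^0$) bound on these parametrized curves. Since the dilation scales the $z$-coordinate by $t^2$, two lifted geodesics are $\HausD$-close precisely when their planar projections and their accumulated balayage areas are uniformly close. I would control the accumulated area not by a general continuity principle but by the explicit structure of the curves involved: trace paths are piecewise linear with breakpoints depending continuously on the endpoint, while in the unstable case $\beta$ is produced from $\alpha$ by a small, area-controlled local modification. Thus it suffices, for each nearby $\Y\in\eS$, to construct a geodesic $\beta$ to $\Y$ whose projection is uniformly close to that of $\alpha$ and whose accumulated area tracks that of $\alpha$; I then split into cases according to $\Foot(\X)=\pi(\X)$.

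\emph{Regular case} ($\pi(\X)\in Q^\circ$). By Proposition~\ref{nonunique-geods} the nonuniqueness locus meets $Q^\circ$ exactly in the interior segments of $\NN_0$, so since $\pi(\X)\notin\eN_\rho(\NN_0)$ the geodesic $\alpha$ is the lift of the \emph{unique} trace path to $\pi(\X)$. Here I would invoke the explicit description from the proof of Theorem~\ref{limitshapebypanels}: inside each $Q_{ij}$ the parameters $(s_1,s,s_2)$, hence the breakpoints of the piecewise-linear trace path, depend linearly (so continuously) on $(x,y)$, and by Theorem~\ref{tracepaths} the map $\trace_{t,T}$ is jointly continuous. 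Continuity across the \emph{benign} quadrilateral boundaries holds because only the configurations where start and end lie on a common side of $I_1$ (the $\NN_0$ segments) cause the trace-path shape to bifurcate; excising $\eN_\rho(\NN_0)$ keeps us uniformly away from these. Hence for $\Y$ near $\X$ the unique trace path to $\pi(\Y)$ has projection uniformly close to that of $\alpha$ and area close to $z(\X)$, and its lift is the desired $\beta$. (Equivalently one may argue by compactness: unit-$L$-speed geodesics to $\Y_n\to\X$ subconverge by Arzel\`a--Ascoli to a geodesic to $\X$, which by uniqueness equals $\alpha$.)

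\emph{Unstable case} ($\pi(\X)$ in the interior of an edge $[\V_i,\V_{i+1}]$ of $L$). Now $\alpha$ is the lift of a beeline of signed area $z(\X)\in[-A,A]$ with all tangents in the closed wedge between $\V_i$ and $\V_{i+1}$; this is the main obstacle, since geodesics to $\X$ are highly non-unique and $\alpha$ may be an arbitrary wiggly beeline, so a limiting argument produces only \emph{some} geodesic to $\X$. I would instead build $\beta$ directly from $\alpha$. First, near a relative-interior point of the side panel $\panel_{i,i+1}$ the surface $\eS$ is the flat panel over the edge, so any $\Y\in\eS$ close to $\X$ either projects into the same edge (with $z(\Y)$ close to $z(\X)$), or, only when $\X$ is near the bounding parabola $z=\pm A$, projects into $Q^\circ$ with $z(\Y)$ close to $\pm A$; the latter subcase reduces to the regular case, $\alpha$ then being the extreme beeline, i.e.\ the trace path. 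In the main subcase I adjust $\alpha$ by (i) a small terminal correction moving its endpoint from $\pi(\X)$ to the nearby $\pi(\Y)$, keeping all tangents in the wedge so that the result remains an $L$-geodesic of length one, and (ii) a localized area-sliding modification of the type used in the beeline existence lemma, inserting a short detour changing the balayage area by the small amount $z(\Y)-z(\X)$ while changing length negligibly. Both modifications are uniformly small as $\dCC(\X,\Y)\to 0$, so $\beta$ is a beeline to $\Y$—hence a geodesic by Lemma~\ref{isoarea}—whose lift is $\HausD$-close to $\alpha$.

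I expect step (ii) to be the hardest part: quantitatively bounding the $C^0$-size of the area-correcting detour in terms of $z(\Y)-z(\X)$ while simultaneously preserving unit $L$-length and the wedge constraint on tangents, so that $\beta$ stays a genuine beeline. The exclusion of $\eN_\rho(\NN_0)$—which, since $\NN_0$ contains each vertex $\V_i$ as well as the interior $-\V_i$ segments, removes neighborhoods of both—is exactly what rules out the two places where these constructions break down: the trace-path bifurcation in the interior of $Q$, and the collapse of the beeline wedge at the corners of $L$.
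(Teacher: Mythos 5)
Your proposal is correct and takes essentially the same route as the paper's own proof: a case split into regular points, where uniqueness of the trace path together with continuity of its defining parameters gives the conclusion, and side-panel points, where a geodesic to $\Y$ is manufactured from $\alpha$ by a small endpoint correction plus an area-adjusting perturbation that stays within the family of beelines (the paper phrases this as a homotopy through beelines with small tracks). Your extra subcase analysis near the bounding parabolas and the quantitative stability remarks are refinements of, not departures from, that argument.
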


\begin{proof}
This is clear for points interior to any regular panel.
For points $\X$ in the boundaries of regular panels, one can see
from the combinatorial description that a sequence of
points approaching $\X$ from any panel must have geodesics
approaching the unique geodesic to $\X$ (see Figure~\ref{combx}).
Suppose $\X$ is in the interior of a side panel and $\Y$
is close to $\X$.  If $\pi(\X)=\pi(\Y)$, then homotope $\pi(\alpha)$
to a path $\overline{\beta}$ with the same endpoints such that
the tracks of individual points are small.
As long as every
tangent vector of every point on every intermediate path points
towards the same side, this is a homotopy
through beelines.  If $\pi(\X)\neq\pi(\Y)$,
then we can concatenate this homotopy with another similar
homotopy ending at a path from the origin to $\pi(\Y)$
without changing area much.  Either way, lift $\overline{\beta}$
to a geodesic $\beta$ ending at $\Y$ which is close to $\alpha$.
\end{proof}

This continuity statement is false if $\pi(\X)\in \NN_0$.  For instance,
take $\X$ to be the point on $\eS$ above $\zero$ and $\{\Y_n\}$ to be
a sequence of points interior to a panel, converging to $\X$.
Then for each $n$, there is a unique geodesic $\gamma_n$ from
$\zero$ to $\Y_n$; these converge to a particular one of the many geodesics
from $\zero$ to $\X$, which include 
the lifts of $\trace_{t,t+1}$ for all $t\in[0,1]$.  These other geodesics to $\X$ are thus
not closely approximated by geodesics to the $\Y_n$ even as $\Y_n\to\X$.

Let $S$ be a finite generating set for $H(\Z)$, and $K$ be the
maximum of $\dCC(\zero,\A)$ over all generators $\A\in S$.
Minimal-length spellings $\W=\A_1\cdots\A_n$ written in letters from $S$
may be realized as {\em discrete geodesics} $\X_0,\X_1,...,\X_n$ in $H(\R)$
where $\X_0=\zero$, $\X_i=\A_1\cdots\A_i$, and the \CC
distance between successive points is no more than $K$.
We will say that the discrete geodesic $\X_0,\X_1,...,\X_n$ is {\em $\epsilon$-linearly tracked}
(or just $\epsilon$-tracked)
by an admissible path $\alpha$
if they have the same endpoints and
the set $\{\X_i\}$ stays inside the $\epsilon n$-neighborhood
of $\alpha$.

\begin{lemma}[Geodesic spellings vs. admissible paths]
\label{geodesic spellings vs admissible paths}\label{spellings}
For any $\epsilon>0$ and $\X\in H(\Z)$ with sufficiently large $n=|x|$,
every discrete geodesic from $\zero$ to $\X$ is $\epsilon$-linearly tracked
by an admissible path of length $n+O(\sqrt n)$.
\end{lemma}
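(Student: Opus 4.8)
The plan is to exploit the projection $\pi\colon H(\R)\to\M$: a discrete geodesic is already a near-optimal path once projected to the horizontal plane, and the only defect is a vertical (i.e. $z$-coordinate) error that accumulates \emph{linearly} in $n$. Since the dilation scales $z$ quadratically, linear vertical error corresponds to sublinear ($\sqrt n$) displacement in the \CC metric and to a sublinear corrective detour. So I would lift the projected path back up, tracking the discrete points for free, and repair the endpoint with one short isoperimetric loop.

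In detail, write the discrete geodesic as $\zero=\X_0,\X_1,\ldots,\X_n=\X$ with $\X_i=\A_1\cdots\A_i$ and $\A_i=(a_i,b_i,c_i)\in S$, where $n=|\X|_S$. Because $\pi$ is a linear projection onto $(\M,+)$, the points $\pi(\X_i)=\sum_{k\le i}(a_k,b_k)$ are the vertices of a polygonal path $P$ in $\M$ whose $i$th edge is $\pi(\A_i)\in Q$; as $Q$ is the unit ball of $\normL{\cdot}$, each edge has $\normL{\pi(\A_i)}\le 1$, so $P$ has $L$-length at most $n$. Lift $P$ to its unique admissible path $\bar P$ from $\zero$ (Lemma~\ref{balayage}); its \CC length equals the $L$-length of $P$, hence is $\le n$. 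The key bookkeeping step compares $\bar P$ with the discrete geodesic: the lifted vertex $q_i$ of $\bar P$ over $\pi(\X_i)$ has $z$-coordinate equal to the balayage area of the partial path $P_i$, while the group law gives the $z$-coordinate of $\X_i$ as that same area plus $D_i:=\sum_{k\le i}c_k$. Thus $\X_i$ and $q_i$ share their $\pi$-projection and differ only in $z$ by $D_i$, so $\dCC(\X_i,q_i)=\dCC\bigl(\zero,(0,0,-D_i)\bigr)=O\bigl(\sqrt{|D_i|}\bigr)$ by left-invariance and the quadratic scaling of $z$ under $\delta_t$. Since $S$ is finite, $|D_i|\le C_0 n$, whence $\dCC(\X_i,q_i)=O(\sqrt n)<\epsilon n$ for large $n$.

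This already tracks the $\X_i$, but $\bar P$ ends at $(x,y,z-D_n)$ rather than at $\X$. To repair the endpoint I would insert at the terminal vertex a single closed loop enclosing signed balayage area exactly $D_n$; by Green's theorem this raises the height of the lift's endpoint by $D_n$, so the resulting admissible path $\alpha$ ends at $\X$ and shares both endpoints with the discrete geodesic. By Busemann's isoperimetric theorem the loop may be taken to be a scaled isoperimetrix, of $L$-length $O(\sqrt{|D_n|})=O(\sqrt n)$. Hence $\alpha$ has \CC length $\le n+O(\sqrt n)$; combined with the lower bound $\dCC(\zero,\X)=n+O(1)$ from Krat's theorem (Theorem~\ref{bdd-diff}), its length is $n+O(\sqrt n)$. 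Because the loop is localized at the final vertex, $\alpha$ still passes through every $q_i$, so the tracking estimate from the previous paragraph is unaffected: $\{\X_i\}$ lies in the $O(\sqrt n)$-neighborhood of $\alpha$, hence in its $\epsilon n$-neighborhood.

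The main obstacle is precisely the vertical bookkeeping: one must identify that the sole obstruction to the naive lift $\bar P$ tracking the discrete geodesic is the linearly growing height defect $D_i$, and then convert ``linear height'' into ``sublinear \CC displacement and sublinear corrective length'' via the dilation and the isoperimetric (Busemann) bound. The secondary care point is localization of the corrective loop, so that fixing the endpoint does not disturb the tracking already established along the body of $\alpha$; placing the loop at the terminal vertex is the cleanest way to keep the two concerns separate.
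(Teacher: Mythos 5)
Your proposal is correct and follows essentially the same route as the paper: project the spelling to $\M$, take the unique admissible lift, use the group law to identify the sole defect as the linearly growing sum of generator heights (the paper's equation $(\dagger)$), convert that linear height defect into an $O(\sqrt n)$ \CC error via the quadratic scaling of the dilation, and repair the endpoint with an $O(\sqrt n)$ correction, invoking Krat's theorem for the lower bound on length. The only cosmetic difference is the form of the repair: the paper concatenates a \CC geodesic from the lift's endpoint $\Y$ to $\X$, which is precisely the lift of your scaled isoperimetrix loop, so the two corrections coincide.
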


\begin{proof}
Let $z(\X)$ denote the height (i.e., the $z$-coordinate) of
$\X\in H(\R)$, and $M:=\max z(S)$.
Let $\X_0,...,\X_n$ be a discrete geodesic  from $\zero$ to $\X\in H(\R)$ in letters from $S$.
Define a polygonal path $\gamma$ in $\M$ by connecting
vertices $\pi(\X_0),\ldots,\pi(\X_n)$ with straight lines.  

The curve $\gamma$ has a unique admissible lift
$\overline\gamma$; its  endpoints are at $\zero$ and at a point $\Y\in H(\R)$
which lies in the same vertical line as $\X$.
Since $\pi(S)\subset Q$, the curve $\gamma$ lies in $nQ$, and since it is made
up of straight segments of $L$-length $\le 1$, the length of
$\gamma$ (and hence $\overline\gamma$) is bounded above by $n$.
We will establish the following equation:
\begin{equation}\tag{$\dagger$}\label{dagger}
	z(\X)=z(\Y)+\sum_{i=1}^nz(\A_i)
\end{equation}
This says that the height of $\X$ is the balayage area of
$\gamma$ plus the heights of all of the letters used in the spelling.

Equation~\eqref{dagger} can be proven by induction.
The group law, in exponential coordinates, says
$$(x,y,z)(x',y',z')
	=\left(x+x', \ y+y', \ z+z'+\frac{xy'-yx'}{2}\right).$$
In particular, if $\X_{n-1}=(x,y,z)$ and $\A_n=(x',y',z')$,
then we see that the change in height between $\X_{n-1}$ and $\X_n=\X_{n-1}\A_n$ is equal to
the height of $\A_n$ plus the  area of an appropriate triangle in $\M$.
%with one vertex at the origin, another at $\pi(\X_{n-1})=(x,y)$ and the third at $\pi(\X_n)=(x+x',y+y')$.  
That means that the height of the spelling path differs from the height 
predicted by balayage area by precisely the height of the generators,
as claimed in \eqref{dagger}.

Now set $C=\dCC((0,0,M),\zero)$, so that 
$$\dCC(\zero,(0,0,Mn))=\dCC(\zero,\delta_{\sqrt n}(0,0,M))=C\sqrt n.$$

Let $\alpha$ be a path obtained by concatenating $\overline\gamma$ with
a \CC geodesic from $\Y$ to $\X$.  From \eqref{dagger} it follows that
$$|z(\X_i)-z(\Y_i)|\le Mi\le Mn,$$
for all $i$, where $\Y_i$ is the point on $\overline\gamma$
corresponding to the vertex $\X_i$.
Therefore $\dCC(\X_i,\Y_i)\le C\sqrt n$, so the discrete geodesic
lives in the $C\sqrt n$-neighborhood
of $\alpha$, which means that the path $Cn^{-1/2}$-tracks the spelling.
Furthermore, the difference in lengths between $\alpha$
and $\overline\gamma$ is bounded by $C\sqrt n$.
Thus the length of $\alpha$ is bounded above by $n+C\sqrt n$.
On the other hand, Theorem~\ref{bdd-diff} provided a global
constant $C'$ such \(\dCC(\zero,\X)\ge n-C'\).  This proves that
the length of $\alpha$ is $n+O(\sqrt n)$.
\end{proof}

\begin{lemma}[Tracking Lemma]
\label{tracking}
For every $\epsilon>0$, $\rho>0$, and sufficiently large $\X\in H(\Z)$,
if $\Foot(\X) \notin \eN_\rho(\NN_0)$, then every discrete geodesic
from $\zero$ to $\X$ is $\epsilon$-linearly tracked by a \CC geodesic.
\end{lemma}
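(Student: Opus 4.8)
The plan is to upgrade the admissible almost-geodesic produced by Lemma~\ref{spellings} to a genuine \CC geodesic, losing only a sublinear amount of Hausdorff distance. Write $n=|\X|_S$ and $d=\dCC(\X,\zero)$; by Krat's theorem (Theorem~\ref{bdd-diff}) we have $d=n+O(1)$. Fix any discrete geodesic $\X_0,\ldots,\X_n$ to $\X$. Lemma~\ref{spellings} furnishes an admissible path $\alpha$ from $\zero$ to $\X$ of length $n+O(\sqrt n)$ whose vertices $\{\X_i\}$ lie within $C\sqrt n=o(n)$ of $\alpha$. Since the tracking neighborhoods are measured in the \CC metric and $\delta_t$ scales \CC distance by $t$, it suffices to find a \CC geodesic $\beta$ from $\zero$ to $\X$ with $\HausD(\alpha,\beta)=o(n)$: the triangle inequality then places $\{\X_i\}$ within $o(n)<\epsilon n$ of $\beta$ once $n$ is large.

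I would pass to unit scale by the dilation $\delta_{1/d}$. Set $\hat\alpha=\delta_{1/d}(\alpha)$ and $\hat\X=\delta_{1/d}(\X)\in\eS$; note that $\Foot(\X)=\pi(\hat\X)$, so the hypothesis reads $\pi(\hat\X)\notin\eN_\rho(\NN_0)$. Because the \CC length of an admissible curve equals the $L$-length of its projection, $\hat\alpha$ has length $\frac{n+O(\sqrt n)}{d}=1+O(n^{-1/2})$. The reduction above becomes: produce a \CC geodesic $\hat\beta$ to $\hat\X$ with $\HausD(\hat\alpha,\hat\beta)=o(1)$, uniformly as $\hat\X$ ranges over the compact set $\{\V\in\eS:\pi(\V)\notin\eN_\rho(\NN_0)\}$.

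The core is a stability statement: admissible almost-geodesics to $\eS$ are close to honest \CC geodesics. I would argue by contradiction and compactness. Suppose the Tracking Lemma fails; then there is a sequence $\X_k$ with $n_k\to\infty$, $\Foot(\X_k)\notin\eN_\rho(\NN_0)$, and discrete geodesics that no \CC geodesic $\epsilon$-tracks. Rescaling as above yields projected paths $\pi(\hat\alpha_k)$ of $L$-length tending to $1$, endpoints $\pi(\hat\X_k)\to q_\infty\notin\eN_\rho(\NN_0)$ (the complement is closed), and balayage areas tending to $z(\hat\X_\infty)$. Parametrizing by arclength, Arzel\`a--Ascoli extracts a uniform limit $p_\infty$; lower semicontinuity of length gives its $L$-length at most $1$, while continuity of the balayage area under uniform convergence of curves of bounded length forces $p_\infty$ to enclose area $z(\hat\X_\infty)$. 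Since $\hat\X_\infty\in\eS$, this is the extremal value $\pm A(q_\infty)$ when $q_\infty\in Q^\circ$ (Theorem~\ref{limitshapebypanels}), while $q_\infty\in L$ has $\|q_\infty\|_L=1$. In either case the Dido and isoarea characterizations (Lemmas~\ref{dido} and~\ref{isoarea}) pin $p_\infty$ down as a length-$1$ path that is the unique trace path, respectively a beeline; its lift $\hat\beta_\infty$ is a \CC geodesic to $\hat\X_\infty$, and $\pi(\hat\alpha_k)\to\pi(\hat\beta_\infty)$ with matching areas makes the lifts $C^0$-close.

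Finally I would resolve the endpoint mismatch — $\hat\beta_\infty$ ends at $\hat\X_\infty$, not at $\hat\X_k$ — by invoking Lemma~\ref{cont-geod}: because $\pi(\hat\X_\infty)\notin\eN_\rho(\NN_0)$, the nearby endpoints $\hat\X_k$ carry \CC geodesics $\hat\beta_k$ with $\HausD(\hat\beta_k,\hat\beta_\infty)\to 0$. Then $\HausD(\hat\alpha_k,\hat\beta_k)\to 0$, and dilating back by $\delta_{d_k}$ contradicts the assumption that no \CC geodesic $\epsilon$-tracks the discrete geodesic. The main obstacle is precisely this uniform stability step: one must verify that the limiting projected path is forced to be a genuine geodesic — which is where extremality of the enclosed area and the uniqueness in Lemma~\ref{dido} are essential — and then transport that geodesic to the true endpoints, for which the hypothesis $\Foot(\X)\notin\eN_\rho(\NN_0)$ is exactly what Lemma~\ref{cont-geod} requires (and explains why the unstable locus must be excluded).
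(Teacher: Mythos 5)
Your proposal is correct and follows essentially the same route as the paper's proof: a contradiction-and-compactness argument that rescales the admissible paths from Lemma~\ref{spellings} by the dilation, extracts a limit via Arzel\`a--Ascoli, identifies that limit as a \CC geodesic, and then uses Lemma~\ref{cont-geod} (valid precisely because the endpoints avoid $\eN_\rho(\NN_0)$) to transfer the limiting geodesic to the nearby endpoints. The only difference is that you spell out why the limit curve is geodesic (lower semicontinuity of length, continuity of balayage area, and the Dido/isoarea characterizations), a step the paper leaves implicit.
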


\begin{proof}
Suppose otherwise; then there is an $\epsilon>0$ such that for
every $n\ge 0$, there exist points $\X_n\in H(\Z)$ such that
$\Foot(\X_n)  \notin \eN_\rho(\NN_0)$, $|\X_n|\ge n$
and there are choices of discrete geodesics from $\zero$ to $\X_n$
which stay $\epsilon|\X_n|$-far away from any \CC geodesics
to the same point.  Let $\alpha_n$ be the admissible paths
approximating these discrete geodesics, as described above.

Denote $\overline{\X}_n=\delta_{1/|\X_n|}\X_n$
and $\overline{\alpha}_n=\delta_{1/|\X_n|}\alpha_n$.
The lengths of the $\overline{\alpha}_n$ are converging to 1 by Lemma~\ref{spellings}.
By passing to a subsequence,
we may assume that the $\overline{\X}_n$ converge to a limit
$\overline{\X}\in\eS$ with $\pi(\overline\X) \notin \eN_\rho(\NN_0)$, and that
the paths $\overline{\alpha}_n$ converge in Hausdorff distance to a geodesic $\overline{\alpha}$
from $\zero$ to $\overline{\X}$ by Arzel\`a-Ascoli.  
By Lemma \ref{cont-geod},
there is a large enough $n$ such that there is a geodesic
$\overline{\beta}_n$ within a Hausdorff distance of $\epsilon/2$
from $\overline{\alpha}$,  and for large enough $n$ we know that $\overline\alpha_n$
is within  $\epsilon/2$ of $\overline\alpha$.
But then for $\beta_n=\delta_{|\X_n|}\overline{\beta}_n$, we have
$\HausD(\alpha_n,\beta_n)<\epsilon|\X_n|$.  So the discrete geodesics
are $\epsilon$-tracked by the \CC geodesics $\beta_n$,
giving us a contradiction.
\end{proof}

A key idea in above proof is that 
additive quasi-geodesics get close to true geodesics as the additive constant goes to zero.
Again, this fails near $\NN_0$.

\begin{figure}[ht]
\begin{tikzpicture}[scale=.2]
\def\n{3}
\filldraw (0,0) circle (0.2) node [left] {$(0,0)$};
\filldraw (\n,\n) circle (0.2) node [above left] {$(n,n)$};
\draw (0,0)--(0,-\n*\n + \n) -- node [below] {$n^2$} (\n*\n,\n-\n*\n)-- node [right] {$n^2$} (\n*\n,\n)--(\n,\n);
\draw [dashed] (0,0)--(\n,\n);
\begin{scope}[xshift=20cm]
\filldraw (0,0) circle (0.2) node [left] {$(0,0)$};
\filldraw (\n,\n) circle (0.2) node [above left] {$(n,n)$};
\draw (0,0)-- node [below] {$n^2$} (\n*\n,0)-- node [right] {$n^2+n$}  (\n*\n,\n*\n+\n)--(\n,\n+\n*\n)--(\n,\n);
\draw [dashed] (0,0)--(\n,\n);
\end{scope}

\end{tikzpicture}
\caption{In these diagrams in $(\M,\ell^1)$, we see a true geodesic $\alpha_n$ 
from $(0,0)$ to $(n,n)$ on the left and a path $\beta_n$
with different combinatorics, enclosing the same area but with excess length, on the right.
When these are rescaled so that their lifts 
terminate on the sphere of radius $1$, the ratio of lengths goes to $1$
while the Hausdorff distance is bounded away from zero.
\label{nonlimit}}
\end{figure}
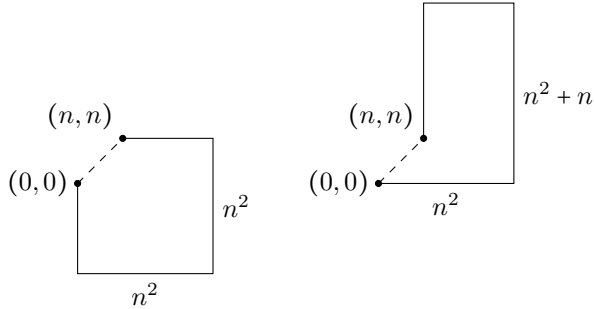

\begin{example}
Let $\X_n=(n,n, \ n^4-\frac 12 n^2)$.
There is a unique \CC  
geodesic $\alpha_n$ from the origin to $\X_n$, and its projection to the horizontal plane is 
shown in Figure~\ref{nonlimit} on the left.  On the right is pictured a non-geodesic path $\beta_n$
to $\X_n$
which has length $4n^2$ (while the true geodesic has length $L_n=4n^2-2n$).  
Let $\Y_n = \delta_{1/L_n}\X_n$ be the dilate of $\X_n$ that lies in the unit sphere $\eS$.
Note that $\Y_n\to (0,0,\nicefrac1{16})$ as $n\to\infty$.
By scaling back the paths $\beta_n$ by the same factor, we obtain paths whose lengths
approach $1$.  However, the Hausdorff distance between $\alpha_n$ and $\beta_n$ is $n^2-n$,
so after scaling they still have a Hausdorff distance of at least $1/4$.  In the limit as $n\to\infty$,
both kinds of paths limit to valid closed geodesics to $(0,0,\nicefrac1{16})$.  But for any finite value of $n$, 
the path with the wrong combinatorics is far from any geodesic.
\end{example}

%{
%\color{red}
%We recover Pansu's result:
%\begin{corollary}
%As $\X\to\infty$ in $H(\Z)$, 
%$$\frac{|\X|}{\dCC(\X,\zero)}\to 1.$$
%\end{corollary}
%}

%%%
\subsection{Instability of geodesics in $H(\R)$ and $H(\Z)$}

\begin{defn} For a geodesic space $X$ with basepoint $x_0$,
let $\G(x)$ be the set of all geodesics from $x_0$ to $x$.
Then the {\em instability function} $\I(x)$ measures how far apart they can be, relative
to the size of $x$:
$$ \I(x) := 2\sdot\frac{\diam\G(x) }{d(x,x_0)} = 
{\textstyle\frac{2}{d(x,x_0)}}\sdot\sup_{\alpha,\beta\in\G(x)} \HausD(\alpha,\beta),$$
where the metric on $\G(x)$ is given by Hausdorff distance.

We will say that a point $x$ is {\em $\epsilon$-stable} if $\I(x)\le \epsilon$. 
\end{defn}

In general, $\diam\G(x) \le d(x,x_0)/2$, because any point on a geodesic from $x_0$ to $x$ is 
within $d(x,x_0)/2$ of one of the 
endpoints, and therefore within that distance of any other geodesic.  
Thus the factor of $2$ in the definition normalizes $\I$ so that $0\le \I(x) \le 1$  for any $X,x_0,x$.   

Note that if $X$ has unique geodesics, then $\I\equiv 0$.
If $X$ is $\delta$-hyperbolic, then $\diam\G(x)\le \delta$ for all $x$, so 
$\I(x)\to 0$ as $x\to\infty$.  

The asymptotic behavior of this instability function is far from being quasi-isometry
invariant.  Obviously $\I\equiv 0$ on the Euclidean plane $\R^2$, but 
the situation is quite different for $\R^2$ with a polygonal norm.  For instance, in the $\ell^1$ norm, 
the elements along the $x$ and $y$ axes have $\I=0$, but the diagonal has $\I(a,a)=1$.  
It is easily verified that with any generating set, the limit shape $L$ divides the plane into sectors,
and the instability is $0$ on the boundaries of the sectors and $1$ on the midlines, varying linearly
between these extremes
along the sides of $L$.  In particular one can recover the vertex directions of $L$ from the function $\I$.
Since the word metric on $(\Z^2,S)$ is asymptotic to $\M_L=(\R^2,\normL\cdot)$, it is
also true for word metrics that $\I$ detects the directions of significant generators.

The goal of this section is to study instability in the Heisenberg
group.  As one would expect,  $H(\Z)$ is more geodesically stable than $\Z^2$
but less stable than a hyperbolic group.  More intriguingly,  the stability depends on 
the generating set in a different way than for those other groups.

%\begin{defn}
%The $\epsilon$-Morse part of a geodesic space $X$ with basepoint $x_0$ is defined to be
%$$\Morse(X)=\{x\in X :  \quad \I(x)\le \epsilon\}.$$
%\end{defn}

%We will denote by $\fword$ the instability function on $H(\Z)$
%for a word metric and by $\fCC$ the instability function
%on $H(\R)$.  By continuity of \CC
%geodesics (Lemma~\ref{cont-geod}), this function is continuous on $\eS$ above $Q\setminus\NN_0$.
%It is identically zero above $Q^\circ\setminus\NN_0$ because here geodesics are unique.  
%Recall that $U_\rho=Q \setminus \eN_\rho(\NN_0)$ for $\rho>0$.
We will call an element $H(\Z)$ regular if it is regular as an element
of $H(\R)$.

\begin{lemma}[Stability of regular points]
For every $\epsilon>0$ and $\rho>0$, all sufficiently large
regular points $\X\in H(\Z)$ with $\Foot(\X) \notin \eN_\rho(\NN_0)$ satisfy
$$\HausD(g,h)<\epsilon|\X|$$ for all word geodesics $g,h$ from
$\zero$ to $\X$.
\end{lemma}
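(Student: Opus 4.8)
The plan is to deduce this from the Tracking Lemma (Lemma~\ref{tracking}) together with the uniqueness of \CC geodesics at regular points (Proposition~\ref{nonunique-geods}): rather than comparing the two word geodesics directly, I would compare each of them against a single canonical reference curve. The first step is to check that the footprint of $\X$ avoids the \emph{entire} nonuniqueness locus $\NN=\NN_0\cup L_0$. Since $\X$ is regular, its footprint lies in the interior $Q^\circ$ (regular points are precisely those reached only by trace paths), so in particular $\Foot(\X)\notin L$, and as $L_0\subseteq L$ we get $\Foot(\X)\notin L_0$. The standing hypothesis $\Foot(\X)\notin\eN_\rho(\NN_0)$ gives $\Foot(\X)\notin\NN_0$ outright. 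Hence $\Foot(\X)\notin\NN$, and Proposition~\ref{nonunique-geods} provides a \emph{unique} \CC geodesic $\beta$ from $\zero$ to $\X$.

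Next I would apply the tracking machinery to each word geodesic separately, with the tolerance halved. Fix a word geodesic $g$ from $\zero$ to $\X$ and write $n=|\X|$. The explicit construction in Lemma~\ref{spellings} furnishes an admissible path $\alpha_g$ of length $n+O(\sqrt n)$ that is genuinely (two-sidedly) Hausdorff-close to $g$, with $\HausD(g,\alpha_g)=O(\sqrt n)=o(n)$: each vertex $\X_i$ lies within $C\sqrt n$ of the corresponding point of $\alpha_g$, and conversely every point of $\alpha_g$ lies within a comparable distance of some $\X_i$, because consecutive lattice points of $g$ and the intermediate points of the lift sit at bounded \CC distance. Running the proof of Lemma~\ref{tracking} then yields, for every $\epsilon'>0$ and all sufficiently large $\X$, a \CC geodesic $\beta_g$ with $\HausD(\alpha_g,\beta_g)<\epsilon' n$; by the uniqueness from the first step, $\beta_g=\beta$ regardless of $g$. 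Combining, $\HausD(g,\beta)<\epsilon' n+o(n)$, and the identical bound holds for $h$ against the same $\beta$. The triangle inequality for Hausdorff distance then gives
$$\HausD(g,h)\le \HausD(g,\beta)+\HausD(\beta,h)<2\epsilon' n+o(n),$$
so taking $\epsilon'=\epsilon/4$ and $\X$ large enough forces $\HausD(g,h)<\epsilon|\X|$, as claimed.

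The step I expect to require the most care is the passage from the \emph{one-sided} notion of ``$\epsilon$-linear tracking'' (which only asserts $\{\X_i\}\subseteq\eN_{\epsilon n}(\beta)$) to a genuine, two-sided Hausdorff bound between a word geodesic and the unique \CC geodesic $\beta$. This is exactly why I would route the argument through the approximating admissible path $\alpha_g$: it is a continuous curve sharing the endpoints of $g$ whose Hausdorff distance to $g$ is controlled in both directions, and the internal conclusion of Lemma~\ref{tracking} is itself a two-sided estimate $\HausD(\alpha_g,\beta_g)<\epsilon' n$. The remaining bookkeeping is to confirm that the various ``sufficiently large'' thresholds and the additive $o(n)$ errors can be absorbed uniformly; since all of them depend only on $\epsilon$, $\rho$, and the fixed generating set $S$, this presents no real obstacle.
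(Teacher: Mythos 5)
Your proposal is correct and follows essentially the same route as the paper: apply the Tracking Lemma to each word geodesic, invoke uniqueness of the \CC geodesic at regular points away from $\eN_\rho(\NN_0)$ (so both word geodesics are tracked by the \emph{same} \CC geodesic), and conclude by the triangle inequality for Hausdorff distance. Your extra care in checking $\Foot(\X)\notin\NN$ and in upgrading the one-sided tracking estimate to a two-sided Hausdorff bound simply fills in details the paper's two-line proof leaves implicit.
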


\begin{proof}
Let $S$ be a finite generating set for $H(\Z)$
and give $H(\R)$ the corresponding \CC metric.
Let $T\ge 0$ be large enough so that when $|\X|\ge T$ and
$\Foot(\X) \notin \eN_\rho(\NN_0)$ and $g$ is a word geodesic from $\zero$ to
$\X$, then there is a \CC geodesic $\gamma$ from $\zero$ to $\X$
such that $\HausD(g,\gamma)<\frac\epsilon2|g|=\frac{\epsilon}2 |\X|$.
Since regular points have unique \CC geodesics, we are done.
\end{proof}

Then since the proportion of $\Delta\Sreg$ lying over $\eN_\rho(\NN_0)$ goes to zero
as $\rho\to0$, we can conclude from the point of view of geometric probability that
all of the regular part of $H(\R)$ is $\epsilon$-linearly stable for every $\epsilon$, so that
Hausdorff distance between word geodesics is bounded above by 
{\em every} linear function of word length.
That is, for regular points of $H(\Z)$,
word geodesics satisfy a sublinear fellow traveling property.
This is much stronger control than one has in $\Z^d$, where
 only words that are powers of single generators can be
$\epsilon$-stable for every $\epsilon$, so almost all points are unstable.
At the other extreme, all elements sufficiently far from the basepoint in a hyperbolic
group $G$  are stable.
The Heisenberg group is intermediate between these in terms of geodesic stability:
only the regular part is $\epsilon$-stable for all $\epsilon$, and its measure
depends nontrivially on the generators.

To sum up this comparison:
\begin{Thm}[Instability Theorem] \label{instab}
With respect to any finite generating set, 
$$\lim_{\epsilon\to 0} \Prob\bigl(\I(\X)\le \epsilon\bigr)=
\begin{cases} 
0, & X=\Z^d,\\
{\Vreg}/{\VS}, & X=H(\Z),\\
1, & X ~\hbox{\rm unbounded,}~\delta\hbox{\rm -hyperbolic.}
\end{cases}$$
\end{Thm}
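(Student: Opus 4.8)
The plan is to treat the three cases separately, disposing of the hyperbolic and abelian cases quickly and concentrating the real work on $H(\Z)$. In the unbounded $\delta$-hyperbolic case, any two geodesics with common endpoints $\delta$-fellow-travel, so $\diam\G(x)\le\delta$ and hence $\I(x)\le 2\delta/d(x,x_0)$; for a fixed $\epsilon$ this forces $\I(x)\le\epsilon$ for every $x$ outside the ball of radius $2\delta/\epsilon$. That ball is finite, so its density in the unbounded group is $0$, giving $\Prob(\I\le\epsilon)=1$ for \emph{every} $\epsilon$, and the limit is $1$. For $\Z^d$ I would invoke the discussion preceding the theorem: the limiting polytope norm makes $\I$ asymptotically a function of direction alone, equal to $0$ only along the finitely many vertex directions of $L$ and bounded away from $0$ off any cone-neighborhood of them. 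Thus $\{\I\le\epsilon\}$ limits to a shrinking cone-neighborhood of those directions, whose asymptotic density tends to $0$ as $\epsilon\to0$.

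For $H(\Z)$ the strategy is to show that the word instability $\fword$ agrees asymptotically with the \CC instability $\fCC$ read off the limit sphere $\eS$, and then to evaluate the latter using the Structure Theorem. For the lower bound $\Prob(\I\le\epsilon)\ge\Vreg/\VS$ (for every $\epsilon$) I would combine uniqueness of \CC geodesics at regular points (Proposition~\ref{nonunique-geods}) with the Tracking Lemma (Lemma~\ref{tracking}): for each $\rho$, every word geodesic to a large regular point with $\Foot(\X)\notin\eN_\rho(\NN_0)$ is $\epsilon$-tracked by the \emph{unique} \CC geodesic, so any two word geodesics fellow-travel and $\I(\X)\le\epsilon$. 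The density of such points tends to $\vol(\Sreghat)/\VS=\Vreg/\VS$ as $\rho\to0$, which yields the bound. The same tracking estimate, applied now at an arbitrary point off $\eN_\rho(\NN_0)$, gives the one-sided comparison $\fword(\X)\le\fCC(\Foot(\X))+o(1)$: each word geodesic lies within $\epsilon|\X|$ of a \CC geodesic to $\X$, and the \CC geodesics themselves lie within $\diam\G(\X)=\tfrac12\fCC\sdot\dCC(\X,\zero)$ of one another.

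The decisive step, which I expect to be the main obstacle, is the reverse inequality $\fword(\X)\ge\fCC(\Foot(\X))-o(1)$, forcing almost every unstable point to be genuinely unstable. For $\X$ whose footprint lies in the interior of an edge $[\V_i,\V_{i+1}]$ of $L$, I would realize two far-apart \CC beelines by \emph{honest} word geodesics. Such a footprint is reached most efficiently using only the generators projecting to $\V_i$ and $\V_{i+1}$, so any ordering of a fixed number $p$, $q$ of these two generators is a length-$(p+q)$ spelling; by \eqref{dagger} the heights attainable by reordering a fixed multiset range over exactly the beeline interval (the balayage range $[-A,A]$ shifted by the order-independent sum of generator heights). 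Choosing two orderings whose projections trace beelines at Hausdorff distance $\gtrsim|\X|$ then produces two minimal spellings at comparable distance, so $\fword(\X)\gtrsim\fCC(\Foot(\X))$. The delicate points are matching the target height $z(\X)$ exactly against the lattice quantization of balayage area, and confirming these spellings are of minimal length rather than merely near-minimal; this is where the discreteness of $H(\Z)$ bites, and I would expect to control it on a full-density subset of unstable points, perhaps adjusting counts by $O(1)$ on a negligible exceptional set. Granting this, $\fword(\X)=\fCC(\Foot(\X))+o(1)$ off $\eN_\rho(\NN_0)$.

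Finally I would evaluate $\fCC$ on $\eS$ and transfer densities. By Proposition~\ref{nonunique-geods}, $\fCC>0$ exactly when the footprint lies in $\NN$; among unstable points this holds off the measure-zero vertex set, while on the regular locus and on the parabolic boundaries of the side panels $\fCC=0$. Lemma~\ref{cont-geod} makes $\fCC$ continuous away from $\NN_0$, so $\{\fCC\le\epsilon\}$ decreases, as $\epsilon\to0$, to $\{\fCC=0\}$, a set whose volume-density is precisely $\Vreg/\VS$. Since the lattice density of a footprint-cone matches its volume proportion (the growth $|B_n|=\VS\sdot n^4+O(n^3)$ and Pansu's theorem let lattice counting stand in for volume on such regions), the relation $\fword=\fCC+o(1)$ off $\eN_\rho(\NN_0)$ gives $\Prob(\fword\le\epsilon)=\Prob(\fCC\le\epsilon)+o(1)$; letting first $\rho\to0$ and then $\epsilon\to0$ yields $\lim_{\epsilon\to0}\Prob(\I\le\epsilon)=\Vreg/\VS$, completing the $H(\Z)$ case and hence the theorem.
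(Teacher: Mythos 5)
Your three-case structure and everything you do on the stability side is exactly the paper's argument: the hyperbolic case via $\diam\G(x)\le\delta$, the $\Z^d$ case via the sector/midline discussion preceding the theorem, and the bound $\Prob(\I\le\epsilon)\ge\Vreg/\VS$ for $H(\Z)$ by combining the Tracking Lemma (Lemma~\ref{tracking}) with uniqueness of \CC geodesics at regular points and letting $\rho\to0$. The instructive difference is in what you call the decisive step. The paper never proves it: the theorem is presented as a summary of the preceding discussion, and the claim that \emph{only} the regular part is $\epsilon$-stable for every $\epsilon$ --- equivalently, that density-almost-every unstable point has instability bounded away from zero --- appears there as prose with no supporting argument. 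So you have correctly identified the one genuinely missing ingredient, and your proposal is more complete in intent than the source on this point.

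That said, your construction for this step has a real gap, and it is not the one you flag most heavily. Geodesity is actually free: $\pi$ is a homomorphism to $(\R^2,+)$ and $\normL{\pi(\A)}\le 1$ for every generator $\A$, so any word of length $n$ ending at $\X$ has $\normL{\pi(\X)}\le n$; since $\normL{\cdot}$ is additive on the cone over the edge $[\V_i,\V_{i+1}]$, every word using $p$ letters over $\V_i$ and $q$ over $\V_{i+1}$ is automatically a geodesic to its endpoint. The problem is the quantization, and the exceptional set is not negligible. Rearranging a fixed multiset of $p$ copies of $\V_i$ and $q$ copies of $\V_{i+1}$ changes balayage area only by integer multiples of $D=|\det(\V_i,\V_{i+1})|$, so the attainable heights over $p\V_i+q\V_{i+1}$ form a single arithmetic progression of spacing $D$, and only footprints in the index-$D$ sublattice generated by $\V_i,\V_{i+1}$ are reached at all. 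For generating sets with $D>1$, your two-letter spellings therefore reach only roughly a $1/D^2$ proportion of the unstable lattice points, and nothing in your argument constrains $\I$ on the complementary positive-density set, so the upper bound $\lim_{\epsilon\to0}\Prob(\I\le\epsilon)\le\Vreg/\VS$ does not follow. The natural repair is to rearrange an \emph{existing} word geodesic $g$ to $\X$ instead of building spellings from scratch: pairing the excess $|\X|_S-\dCC(\X,\zero)\le K$ against the dual linear functional of the edge shows that all but $O(1)$ letters of $g$ project into $[\V_i,\V_{i+1}]$; permutations of the letters of $g$ preserve length and projection, and by \eqref{dagger} a permutation ends at $\X$ itself precisely when its projected path encloses the same balayage area, in which case it is again a geodesic to $\X$ because it has length $|\X|_S$. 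What then remains --- and still requires proof --- is a combinatorial lemma asserting that when $\Foot(\X)$ lies in the interior of an edge and $z(\X)$ is not too close to the extreme heights, the equal-area permutation class of $g$ contains a path linearly far from $g$; this formulation dodges the residue obstruction entirely, since $g$ itself witnesses that the target area is attainable.
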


%Now fix $\Y\in U_\rho$.  Consider a sequence of points $\X_n\in H(\Z)$
%so that $\Foot(\X_n)\to \Y$ and they exit
%all compact sets. 
%Carnot-Caratheodory geodesics $[\zero,\X_n]$ are unique.  It's possible that word geodesics are longer (they can't be shorter
%since they are also competitors for \CC geodesics).  But there is a global additive bound, say $K$, on the difference 
%between $\normL{\X_n}$ and $|\X_n|$.  Choose geodesics $g_n$ in the word metric from $\zero$ to $\X_n$.
%
%Let $\ell_n=\normL{\X_n}$ and let $\Y_n=\delta_{1/\ell_n}(\X_n)$.  Then the length of the $\delta_{1/\ell_n}(g_n)$
%is converging to $1$, which forces the paths to converge to the unique \CC geodesic $[\zero,\bar\Y]$.
%By the lemma, when $\Y_n$ is close to $\bar\Y$, then there is a geodesic
%$[\zero,\Y_n]$ for $g_n'$ to be close to!
%
%Thus the Hausdorff distance between all $g_n'$ is going to zero, which means the Hausdorff distance between the $g_n$
%is sublinear in $\ell_n$.  This means you can get this Hausdorff distance under $\epsilon |\X|$ by waiting until a large enough
%threshold time; now sup the threshold times over the compact set $U_\rho$.  

%%%%%%%%%%%%%%%%%
\section{Lattice point counting}\label{counting}

Because the balls in the \CC metric get ``fat" (i.e., the volume of a $1$-neighborhood of the sphere is lower-order
than the volume of the ball), it is easy to see that the number of lattice points in a ball of large radius  
equals the volume of the ball (to first order).  What is much more delicate is to count lattice points in an annular region 
between a sphere of radius $n$ and a sphere of radius $n-1$.

As before, the footprint of the limit shape, $Q$, is covered by finitely many quadrilaterals $Q_{ij}$.
For a subset $U$ of $H(\R)$, let $\Delta_n U = \{ \delta_t(U) : n-1 < t \le n\}$ 
be the ``annular" region between
the $(n-1)$st and $n$th dilate of $U$.

Recall the functions $\z$ and $A$ described in Theorem
\ref{limitshapebypanels}.  These functions are related by $A=\max\z$;
that is, $A$ is the height (maximum $z$-coordinate) of the unit sphere in the \CC metric
over points in $Q$, or equivalently, it is the maximum balayage area of curves of $L$-length $1$
from the origin to $(x,y)$.
We can compute the height of the \CC sphere of radius $n$ above
$(x,y)\in nQ$ using the rule $H_n(x,y) = n^2 \sdot A(\nicefrac xn,\nicefrac yn)$.
(Note that for fixed $(x,y)$ and large enough $n$,
this is continuous and strictly increasing in $n$.)
The unit ball is compact, so $A$ takes a maximum value
$\Amax$ on $Q$ (the largest height achieved by the unit sphere).
Then for fixed $(x,y)$, the height function $H_n(x,y)$ is
bounded above by $\Amax\cdot n^2$.   But this is not by itself good enough
to get control on the height difference between the $n$-sphere 
and the $(n-1)$-sphere over a point, $h_n:=H_n-H_{n-1}$.

\begin{lemma}[Control on height]
Take a polygonal \CC metric on $H(\R)$ and
let $h_n(x,y)$ be the height difference between the spheres of
radius $n$ and $(n-1)$ over $(x,y)$ for $(x,y)\in (n-1)Q$.
Fix any nondegenerate quadrilateral $Q_{ij}$ and let
$\Omega_n\subset nQ_{ij}$ denote the complement
of the 1-neighborhood of the boundary.
Then $h_n=O(n)$ over $\Omega_n$.
\end{lemma}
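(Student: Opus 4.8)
The plan is to read off both heights from the explicit description already available. The dilation structure gives $H_m(x,y)=m^2\sdot A(\nicefrac xm,\nicefrac ym)$, and by the Structure Theorem (Thm~\ref{limitshapebypanels}) the restriction of $A$ to $Q_{ij}$ is a quadratic $A_{ij}(u,v)=c_{20}u^2+c_{11}uv+c_{02}v^2+c_{10}u+c_{01}v+c_{00}$. Substituting $u=\nicefrac xm$, $v=\nicefrac ym$, the homogeneity of $\delta_m$ collapses this to
\[
H_m(x,y)=c_{20}x^2+c_{11}xy+c_{02}y^2+m(c_{10}x+c_{01}y)+m^2c_{00}.
\]
The observation driving everything is that the purely quadratic part in $(x,y)$ carries no power of $m$: it is the degree-two part of $A_{ij}$ evaluated directly at $(x,y)$, blind to the radius.

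First I would arrange that $H_n$ and $H_{n-1}$ are governed by the \emph{same} polynomial $A_{ij}$, which is the role of $\Omega_n$. The two relevant rescalings $(\nicefrac xn,\nicefrac yn)$ and $(\nicefrac x{n-1},\nicefrac y{n-1})$ differ by a radial displacement of size $O(\nicefrac1n)$ in $Q$; writing the convex quadrilateral as $\{a_m\sdot(u,v)\le b_m\}$, one sees that the constraint cutting out $(n-1)Q_{ij}$ is tighter than that for $nQ_{ij}$ only on the outer faces, and only by the $n$-independent amount $b_m$, so excising the boundary neighborhood that defines $\Omega_n$ forces $(x,y)\in(n-1)Q_{ij}$ as well. (The precise width is immaterial: since $A$ is continuous across quadrilateral boundaries and the two rescalings are $O(\nicefrac1n)$ apart, even a crossing leaves the estimate intact.) The step I expect to matter most is not this bookkeeping but the cancellation it enables: a priori $H_n$ and $H_{n-1}$ are each as large as $\Amax\sdot n^2$, so naively $h_n$ is only $O(n^2)$, and the whole content of the lemma is that the $n^2$ terms, together with the quadratic-in-$(x,y)$ terms, cancel exactly. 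Only the piecewise-quadratic structure makes this visible.

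With both heights expressed through $A_{ij}$, the conclusion is a one-line subtraction: the $m$-free quadratic part is identical in $H_n$ and $H_{n-1}$ and drops out, leaving
\[
h_n(x,y)=(c_{10}x+c_{01}y)+(2n-1)c_{00}.
\]
Since $(x,y)\in nQ_{ij}\subset nQ$ and $Q$ is bounded, $|x|,|y|\le\diam(Q)\sdot n$, so the linear term is $O(n)$ and the constant term is visibly $O(n)$; hence $h_n=O(n)$ on $\Omega_n$, with implied constant depending only on the fixed coefficients of $A_{ij}$ and on $\diam(Q)$.
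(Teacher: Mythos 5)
Your proposal is correct and is essentially identical to the paper's own proof: the paper likewise restricts to $\Omega_n$ so that both $\frac1n(x,y)$ and $\frac1{n-1}(x,y)$ lie in the same $Q_{ij}$, invokes the Structure Theorem to write the height over that quadrilateral as $m^2\sdot f(\nicefrac xm,\nicefrac ym)$ for a single quadratic $f$, and performs exactly your cancellation, arriving at the same formula $h_n=b_1x+b_2y+(2n-1)c$ before bounding $x,y=O(n)$. The only difference is cosmetic notation for the coefficients.
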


\begin{proof}
First of all, for $\X\in \Omega_n$, one easily verifies that both $\frac1n\X$
and $\frac{1}{n-1}\X$ lie in $Q_{ij}$.
Recall from the structure theorem
(Thm~\ref{quadratic}) that the nonnegative part of 
$\eS$ over $Q_{ij}$ is the graph of a single quadratic
polynomial in $x,y$ ; denote this
polynomial by $f(x,y)=a_1 x^2 +a_2 xy + a_3 y^2 + b_1 x + b_2 y +c$.
For all $(x,y)\in\Omega_n$, we have
\[
	\bigl( x,y, \ n^2 \sdot f(\textstyle \frac xn, \frac yn) \bigr)
	\quad
		\in
	\;
	\delta_n(\panel_{ij})
\]
so
\[
	h_n(x,y)
		=
	n^2\sdot f(x/n,y/n)
		-
	(n-1)^2\sdot f(\nicefrac{x}{n-1},\nicefrac{y}{n-1}) = b_1x+b_2y+2cn-c.
\]
Since $(x,y)\in\Omega_n\subset nQ_{ij}\subset nQ$, both $x$ and $y$ are $O(n)$, and 
therefore $h_n$ is as well.
\end{proof}

\begin{Thm}[Counting Theorem]\label{countingthm}
Consider the \CC metric induced 
on $H(\R)$ by a norm $\normL\cdot$ on $\M$, with $L$ a polygon.
Let $\sigma$ be an arbitrary measurable subset of $\eS$, the \CC unit
sphere.
Then 
$$\#\left( H(\Z) \cap \Delta_n\sigma \right)= 4n^3 \vol(\hat\sigma) + O(n^2).$$
\end{Thm}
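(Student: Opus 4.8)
The plan is to count lattice points in the annular shell $\Delta_n\sigma$ by fibering over the footprint and using the fact that the defining surfaces are graphs of quadratic polynomials. First I would reduce to the case where $\sigma$ is contained in a single panel $\panel_{ij}$ over a nondegenerate quadrilateral $Q_{ij}$. Since $\eS$ is a finite union of such panels (together with the lower-order side panels), and the total count is additive in $\sigma$ up to the overlaps on panel boundaries (which have measure zero and contribute $O(n^2)$ lattice points, being essentially $2$-dimensional surfaces dilated out to radius $n$), it suffices to prove the estimate for each piece. Fix such a $\sigma$ and let $f(x,y)=a_1x^2+a_2xy+a_3y^2+b_1x+b_2y+c$ be the quadratic polynomial whose graph is the nonnegative part of $\eS$ over $Q_{ij}$, as in the Structure Theorem (Thm~\ref{quadratic}).

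The core computation is to count integer points $(x,y,z)$ lying between the dilated surfaces $\delta_{n-1}\sigma$ and $\delta_n\sigma$. For each integer point $(x,y)$ in the relevant region of $nQ_{ij}$, the number of admissible integer heights $z$ is controlled by the height difference $h_n(x,y)=H_n(x,y)-H_{n-1}(x,y)$, which the preceding Control on Height lemma evaluates exactly as $b_1x+b_2y+2cn-c$. So the $z$-fiber over each $(x,y)$ contains $h_n(x,y)+O(1)$ lattice points (recalling the $\epsilon$-shift in the $z$-direction for the Heisenberg lattice, which only affects the $O(1)$). I would then sum $h_n$ over the integer points $(x,y)$ in the projected annular region and approximate the resulting Riemann sum by the integral
\[
	\sum_{(x,y)} h_n(x,y) \;=\; \int h_n(x,y)\,dx\,dy + O(n^2),
\]
where the error $O(n^2)$ comes from boundary effects on a region of diameter $O(n)$ whose boundary has length $O(n)$. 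Substituting $h_n=b_1x+b_2y+2cn-c$ and changing variables via $\delta_{1/n}$ (which rescales $x,y$ by $1/n$ and contributes a Jacobian factor $n^2$) turns this integral into $n^3$ times the integral of the coned density over the projection of $\sigma$, and one checks that this equals $4n^3\vol(\hat\sigma)$. The factor of $4$ and the identification with $\vol(\hat\sigma)$ follow because $\vol(\hat\sigma)=\int_{\proj\sigma}\int_0^{f}\,dz\,dx\,dy$ scaled appropriately under the anisotropic dilation $\delta_t(x,y,z)=(tx,ty,t^2z)$, which expands volume by $t^4$; differentiating the enclosed volume $\vol(\delta_n\hat\sigma)=n^4\vol(\hat\sigma)$ produces the annular volume $4n^3\vol(\hat\sigma)+O(n^2)$, and the lattice count matches this to first order.

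I would organize the argument so that the main identity is: the lattice point count equals the annular volume plus error, i.e. $\#(H(\Z)\cap\Delta_n\sigma)=\vol(\Delta_n\hat\sigma)+O(n^2)=4n^3\vol(\hat\sigma)+O(n^2)$. The first equality is the substance and follows from the fiberwise count above combined with the observation that $H(\Z)$ is a unit-covolume shift of $\Z^3$, so counting lattice points in a region of controlled shape matches its Euclidean volume up to a boundary error proportional to the surface area, which is $O(n^2)$ here.

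The main obstacle I expect is controlling the error term uniformly, specifically near the boundaries between panels and near the degenerate quadrilaterals $Q_{ii}$ and $Q_{i,i+1}$. The Control on Height lemma only gives $h_n=O(n)$ on the interior region $\Omega_n$ that excludes a $1$-neighborhood of $\partial(nQ_{ij})$; near the boundary, the quadratic polynomials from adjacent panels disagree and the height difference is harder to pin down. The crucial point is that this boundary layer is a region of area $O(n)$ (a fixed-width collar around a curve of length $O(n)$), over which each $z$-fiber has height $O(n^2)$, yielding a contribution of $O(n)\cdot O(n^2)=O(n^3)$ — which is \emph{not} obviously absorbed into the claimed $O(n^2)$ error and would swamp the main term. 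To resolve this I would argue that within this collar the two competing surfaces are close enough, or that the overlap is genuinely lower-dimensional, so the net overcount or undercount along panel boundaries is only $O(n^2)$; making this rigorous — showing that the boundary contributions telescope or cancel rather than merely being crudely bounded — is the delicate heart of the proof.
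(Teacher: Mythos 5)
Your counting argument for the regular part has the same skeleton as the paper's proof: fiber $\Delta_n\sigma$ over the footprint, use the Structure Theorem to write the sphere as a quadratic graph, count $h_n(x,y)+O(1)$ lattice points on each vertical fiber, and compare the resulting Riemann sum to $\vol(\Delta_n\sigma)=4n^3\vol(\hat\sigma)+O(n^2)$. But the proposal has two genuine gaps. The first is your dismissal of the side panels as ``lower-order'': this is false. The unstable part $\Suns$ subtends positive volume (for the $\ell^1$ metric, $\Vuns=1/6$ out of $\VS=31/72$), so a definite positive proportion of all lattice points in the shell lie in $\Delta_n\Suns$; moreover your fibering scheme cannot even be set up there, since a side panel projects to an edge of $L$ (a segment, not a two-dimensional region), and over each of its footprint points the sphere is a vertical interval $[-A,A]$ rather than the graph of a function. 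The paper gives a separate argument for $\sigma\subset\Suns$: a Riemann sum over the trapezoid between $n\gamma$ and $(n-1)\gamma$, with $\gamma$ a subinterval of an edge of $L$ and integrand the height $T(x,y)=\epsilon\sdot\normL{(x,y)}$. Without something of this kind you have proved the theorem only for $\sigma\subset\Sreg$, which is strictly weaker than the statement.

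The second gap is the boundary collar that you flag and leave open; it is not, in fact, the delicate heart of the proof, and either of two observations closes it. (i) The balayage function $A$ is continuous on the compact polygon $Q$ and quadratic on each of finitely many quadrilaterals, hence globally Lipschitz and bounded; so for every $(x,y)\in nQ\cap(n-1)Q$, writing
$$h_n(x,y)=n^2\left[A(\nicefrac xn,\nicefrac yn)-A\bigl(\nicefrac{x}{n-1},\nicefrac{y}{n-1}\bigr)\right]+(2n-1)\,A\bigl(\nicefrac{x}{n-1},\nicefrac{y}{n-1}\bigr),$$
the bracket is $O(1/n)$ because the two rescaled footprints are $O(1/n)$ apart---no matter which quadrilaterals they land in---so $h_n=O(n)$ everywhere, not just on $\Omega_n$. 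Hence fibers over the collar have height $O(n)$, not the $O(n^2)$ you feared, and the collar of area $O(n)$ contributes $O(n^2)$: the $O(n^3)$ catastrophe never materializes. (ii) Alternatively, the paper's reduction is finer than yours in exactly the way needed to avoid the issue: it reduces not to whole panels but to small squares $U$ whose closures lie in the interior $Q_{ij}^\circ$, so that for large $n$ the region over $U_n=nU\cap(n-1)U$ never approaches $\partial(nQ_{ij})$ and the height lemma applies as stated, while the leftover piece $B_n$ projects into a width-$O(1)$, length-$O(n)$ square annulus and contributes only $O(n)\cdot O(n)=O(n^2)$. Your coarser reduction (to entire panels) is precisely what manufactures the obstacle you were then unable to dispose of.
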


\begin{proof} First we consider $\sigma\subset\Sreg$.
For this case, it suffices to treat small subsets $\sigma$
which project to squares in $\M$ whose closures are contained
in the interior of a single quadrilateral:
$$U:=\pi(\sigma)=(u,u+\epsilon)\times (v,v+\epsilon)$$
such that $[u,u+\epsilon]\times[v,v+\epsilon]\subset Q_{ij}^\circ$.
Since the dilation $\delta_t$ is just a homothety on $\M$, the 
dilate $tU$ is a square for every $t$.
The condition on the closure of $U$ guarantees that when $n$
is large enough, $nU$ does not intersect the $1$-neighborhood
of the boundary of $nQ_{ij}$.

Consider the rectangle $U_n:=nU \cap (n-1)U$, let
$A_n:=\Delta_n\sigma \cap \pi^{-1}(U_n)$ be 
the part of the $n$th annular shell that is vertically over
$U_n$, and let $B_n:=\Delta_n\sigma \setminus A_n$ be the rest
(see Figure~\ref{shell anatomy}).

\begin{figure}[ht]
\begin{tikzpicture}
\def\a{.6} \def\b{.85} \def\m{7} \def\M{8} \def\cheat{2.2}
\draw [->](0,-1) -- (0,5);
\draw [->](-1,0)--(10,0);
\draw [ultra thick] (\cheat*\a,0)-- node [below] {$U$} (\cheat*\b,0);
\draw [red] (\m*\a,0)--(\m*\b,0);
\draw [red] (\M*\a,0)--(\m*\b+\b,0);
\draw [ultra thick,blue!80!black] (\M*\a,0)-- node [below,black] {$U_n$} (\m*\b,0);

\coordinate (s) at (\cheat*\a,\cheat*\cheat*\a*\a/7);
\coordinate (t) at (\cheat*\b,\cheat*\cheat*\b*\b/16);
\coordinate (p) at (\m*\a,\m*\m*\a*\a/7);
\coordinate (q) at (\m*\b,\m*\m*\b*\b/16);
\coordinate (x) at (\M*\a,\M*\M*\a*\a/7);
\coordinate (y) at (\M*\b,\M*\M*\b*\b/16);
\draw (0,0) parabola (6,36/7);
\draw (0,0) parabola (8,4);
\begin{scope}
\clip (0,0) parabola (x) parabola (y) parabola bend (0,0) (0,0);
\clip (p) parabola (q)--(10,2)--(10,5)--(\m*\a,5)--cycle;
\draw [fill=red!80] (\m*\a,0) rectangle (\M*\b,5);
\draw [fill=blue!80!black] (\M*\a,0) rectangle (\m*\b,5);
\end{scope}
\draw [very thick] (s) node [above] {$\sigma$} parabola (t) ;
\draw (p) parabola (q);
\draw (x) parabola (y);

\draw [gray] (\m*\a,0)--(\m*\a,5);
\draw [gray] (\M*\a,0)--(\M*\a,5);
\draw [gray] (\m*\b,0)--(\m*\b,5);
\draw [gray] (\M*\b,0)--(\M*\b,5);

\end{tikzpicture}

\caption{This shows  an $xz$-plane slice,
with $U_n$ along the $x$-axis, $A_n$ in blue, and $B_n$ in red.  We have $A_n\cup B_n = \Delta_n\sigma$;
we want to count the lattice points in that entire region.  The $A_n$ lattice points are easier to count, and the number
in $B_n$ is lower-order.}
\label{shell anatomy}
\end{figure}
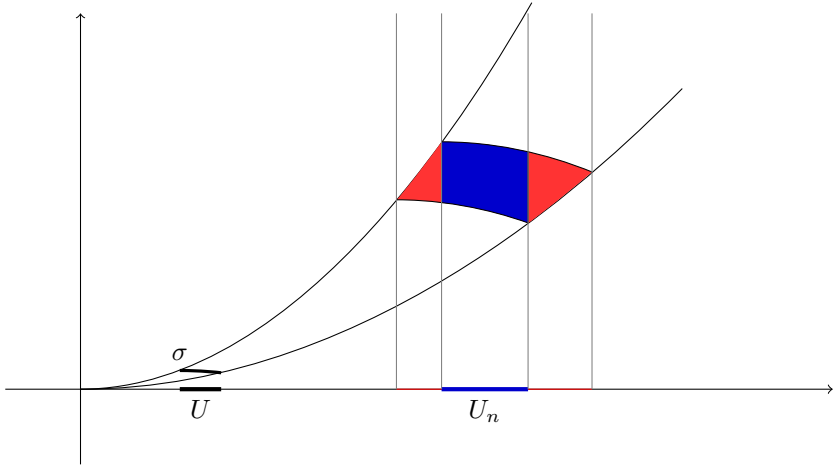

We will make the following chain of comparisons: 
$$\#\left( H(\Z) \cap \Delta_n\sigma \right)\sim \#\left( H(\Z)
\cap A_n \right) \sim \vol(A_n) 
\sim \vol(\Delta_n\sigma) \sim 4n^3 \vol(\hat\sigma),$$
while keeping track of the error term.

First we will estimate the number of lattice points in $B_n$, and
its volume.
The projection $\pi(B_n)$  is contained in a square annulus in $\M$
of width $O(1)$ and sidelength $O(n)$.  Thus the projection contains at most $O(n)$ lattice points $(x,y)$.  
By the previous lemma, the difference in heights between $\delta_n\eS$ and $\delta_{n-1}\eS$ over each of these points is at most $O(n)$, so the
total number of lattice points in $B_n$ is at most $O(n^2)$. 
Thus
$$\#( H(\Z) \cap \Delta_n\sigma) = \#(H(\Z) \cap A_n) + O(n^2).$$
Likewise, the volume of $B_n$ is bounded above by integrating $h_n(x,y)$ over 
$(x,y)\in\pi(B_n)$, so $\vol(B_n)=O(n^2)$.
This shows that $\vol(A_n)=\vol(\Delta_n\sigma) + O(n^2)$.
For the comparison on the far right, we have 
$$\vol(\Delta_n\sigma)=\vol(\delta_n\hat\sigma)-\vol(\delta_{n-1}\hat\sigma)
=\left[ 4n^3+O(n^2) \right] \vol(\hat\sigma),$$
 because $\delta_t$ expands volume by $t^4$.
 
All that remains is to compare $ \#\left( H(\Z) \cap A_n \right)$ with $\vol(A_n)$.
Fix attention on an integer point $(x,y)\in U_n$.  For all the 
lattice points in the vertical line over $(x,y)$, let us 
count the number in $\Delta_n\sigma$.   
We find $ \#(H(\Z) \cap A_n)  = (\sum_{U_n \cap \Z^2} h_n) + O(n^2)$, 
because $\#(U_n\cap\Z^2)=O(n^2)$ and there is bounded error on each line.
On the other hand, $\vol(A_n)  = \int_{U_n} h_n$.
Regard the sum as a Riemann sum approximation to the integral
over the integer squares.  
Since $h(n)$ is linear, the Riemann sum gives an equal error on each of the 
$O(n^2)$ squares, for total error of order $n^2$.

Next we deal with the case $\sigma\subset\Suns$, where it suffices to consider
 $\sigma=\gamma\times[h,h+\epsilon]$ with $\gamma$ a
subinterval of an edge of $L$.  
Let $\Sigma=\Delta\gamma$ denote the sector of $\M$ determined by $\gamma$,
and let $U_n$ be the trapezoid in $\Sigma$ between $n\gamma$ and $(n-1)\gamma$.
We see that $\Delta\sigma= \bigcup_{t\ge 0} t\gamma \times [ht^2,(h+\epsilon)t^2]$.
Let the height difference from top to bottom at $(x,y)$ be denoted $T(x,y)=\epsilon\sdot\normL{(x,y)}$.

We shall make the comparison $$\#(H(\Z)\cap \Delta_n\sigma) \sim \vol(\Delta_n\sigma)$$
by considering the Riemann sum approximation of  $\int_{U_n}T$
by  $\sum_{U_n \cap \Z^2}T$.  For any  $\U,\V\in U_n$, the $L$-norm is between $n-1$ and $n$,
so $T(\V)-T(\U) = \epsilon\normL\V - \epsilon\normL\U \le 2\epsilon n$, which gives a bound on the 
 error per term in the Riemann sum.
Since $U_n$
has bounded width, its area and $\#\Z^2\cap U_n$ are both $O(n)$.
Putting these factors together, we get that the total error is $O(n^2)$,
so we have shown that $\#(H(\Z)\cap \Delta_n\sigma)=4n^3\vol(\hat\sigma)+O(n^2)$, as
in the regular case.
\end{proof}

%%%%%%%%%%%
\section{Limit measure in the standard generators}\label{std-sec}

To get more precise results that allow averaging over spheres and not just balls, 
we focus attention on $H(\Z)$ with the standard generating set $S=\std=\pm \{ \E_1,\E_2\}$ and 
consider the word metric with respect to those generators.  The  goal for this section is to show 
for this special case that counting measure on discrete spheres
in the word metric limits to cone measure (with respect to Heisenberg dilation) 
on the limit shape $\eS$.

%We take advantage of the fact that
%$\std\subset\hat\eS$, which implies $\dCC(\X,\zero) \le|\X|_\std$ for all $\X\in H(\Z)$.
%To see this, first note that the projection to $\M$ of any word of length $n$ is clearly in the dilate $nQ$,
%since $Q$ is the convex hull of the generators.  Now suppose that $\X$ is a word of length $n$, so that
%it lies on the dilate $\lambda L$ for some $\lambda\le n$.    That means that its $z$ coordinate
%is equal to $\lambda^2 

\begin{Thm}[Limit measure for standard generators]\label{std}
Consider the \CC metric induced by the $\ell^1$ norm on $\M$, which is the limit metric for $(H(\Z),\std)$.
For any measurable set $\sigma\subset \eS$, 
$$\# ( S_n \cap \Delta \sigma ) = n^3 \vol(\hat \sigma) + O(n^2),$$
and therefore
$$ \lim_{n\to\infty} \frac { \# ( S_n \cap \Delta \sigma ) }{\# S_n} =\frac{\vol(\hat\sigma)}{\vol(\hat\eS)}.$$

Thus for the discrete spheres $S_n$ in $(H(\Z),\std)$, the counting measure on $\delta_{1/n}(S_n)$
converges to cone measure on the square $L\subset\M$.
\end{Thm}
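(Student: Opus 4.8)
The plan is to prove this cone by cone, showing that the discrete word sphere carries the same leading-order lattice-point count as the \CC annular shell, so that the value is ultimately supplied by the Counting Theorem (Thm~\ref{countingthm}). Exactly as in that proof, finite additivity of counting and the fact that $\eS$ is covered by finitely many panels let me reduce to two model cases: a small patch $\sigma\subset\Sreg$ whose footprint is a square $U$ with $\overline U\subset Q_{ij}^\circ$, and a patch $\sigma\subset\Suns$ lying over a subinterval of an edge of $L$. In each case I organize the count over footprint lattice points,
$\#(S_n\cap\Delta\sigma)=\sum_{(x,y)}\#\{z:(x,y,z)\in S_n\cap\Delta\sigma\}$,
the sum running over the $O(n^2)$ (resp.\ $O(n)$) integer points $(x,y)$ in the planar cone beneath $\sigma$.

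The heart of the matter is to control the vertical fiber $\{z:(x,y,z)\in S_n\}$ for a fixed footprint point. Write $H_n(x,y)=n^2A(x/n,y/n)$ for the height of the \CC sphere built from the balayage function $A$ of the Structure Theorem, and $h_n=H_n-H_{n-1}$ for its increment; by the Control-on-Height lemma $h_n=O(n)$ over the interior region $\Omega_n\subset nQ_{ij}$. I claim that for $S=\std$ the word-sphere fiber is an interval (a cap of $S_n$) whose vertical extent equals $h_n(x,y)$ up to a \emph{uniformly bounded} error. For this I invoke the explicit combinatorial description of $S_n$ developed earlier in this section (equivalently Blach\`ere's exact word-length formula), which records how the minimal spelling length grows with enclosed balayage area along each vertical line, and so locates the top and bottom of the fiber precisely enough that consecutive caps differ in height by $h_n+O(1)$.

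Granting the fiber estimate, I sum: $\sum_{(x,y)}\bigl[h_n(x,y)+O(1)\bigr]=\sum_{(x,y)}h_n(x,y)+O(n^2)$, and the remaining comparison $\sum_{(x,y)}h_n=\int h_n+O(n^2)$ is the same Riemann-sum argument carried out in the proof of Theorem~\ref{countingthm}, using that $h_n$ is affine on $\Omega_n$. The unstable case is handled as there, the fiber now being a single $z$-interval of height $\asymp\normL{(x,y)}$ over a trapezoid of bounded width, contributing $O(n^2)$. Assembling the cases gives $\#(S_n\cap\Delta\sigma)=\#(H(\Z)\cap\Delta_n\sigma)+O(n^2)$, so Theorem~\ref{countingthm} evaluates the leading term; taking $\sigma=\eS$ gives $\#S_n$ to first order, and forming the quotient sweeps the $O(n^2)$ errors into the limit, yielding $\#(S_n\cap\Delta\sigma)/\#S_n\to\vol(\hat\sigma)/\vol(\hat\eS)$. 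This is precisely the assertion that the push-forward of counting measure under $\delta_{1/n}$ converges to \CC cone measure on $\eS$, hence, after projecting footprints, to cone measure on the square $L$.

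The main obstacle is the fiber estimate of the second paragraph. Krat's bounded-difference theorem (Thm~\ref{bdd-diff}) only confines $S_n$ to a \CC shell of bounded width; over a fixed footprint point that shell is a $z$-window of width $\asymp h_n=\Theta(n)$, so it pins the cap down only to within $O(n)$ in height, and summed over $O(n^2)$ footprints this leaves an $O(n^3)$ ambiguity---the same order as the main term. Thus bounded difference alone is powerless here: what is really needed is that $|\cdot|_{\std}-\dCC$ varies slowly along each vertical line, so that the cumulative cap-to-cap increments match $h_n$ on average. This slow variation is exactly what the standard-generator combinatorics provides, and is the reason the theorem is stated only for $S=\std$.
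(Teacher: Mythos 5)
Your scaffolding---reduction to small patches over a single quadrilateral or edge, fiber-by-fiber counting over footprint lattice points, Riemann-sum comparison, and final assembly via Theorem~\ref{countingthm}---matches the paper's strategy, and your closing diagnosis that Krat's bounded-difference theorem (Thm~\ref{bdd-diff}) is too weak and that the real input is fine control of $|\cdot|_\std-\dCC(\cdot,\zero)$ along vertical lines is exactly right. But that input is the entire content of the theorem, and your version of it is both unproved and false as stated. You claim the fiber $\{z:(x,y,z)\in S_n\}$ is an interval of vertical extent $h_n(x,y)+O(1)$, with consecutive caps tiling each vertical line. What is actually true for $S=\std$---and what the paper spends most of Section~\ref{std-sec} establishing---is parity-dependent: the fiber is \emph{empty} unless $x+y\equiv n\pmod 2$ (Lemma~\ref{parity}), and when the parities agree it consists of \emph{all} lattice points between $\delta_{n-2}\eS$ and $\delta_n\eS$, an interval of extent $h_n+h_{n-1}=2h_n+O(1)$; this is the ``twice the points on half the lines'' picture of Figure~\ref{cross-sec}, encoded in Theorem~\ref{word-length-formula}, which pins $|\X|_\std$ as the unique integer of the parity of $x+y$ lying in $[\dCC(\X,\zero),\,\dCC(\X,\zero)+2)$. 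Your leading term survives only by an averaging coincidence (half as many nonempty fibers, each twice as tall), so any argument routed through your per-fiber claim is an argument for a false statement.

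Moreover, your justification for the fiber estimate is circular: you ``invoke the explicit combinatorial description of $S_n$ developed earlier in this section,'' but that description is precisely what a proof of Theorem~\ref{std} must supply. The paper builds it from monotonicity of $\dCC$ and of word length in the height coordinate (Lemmas~\ref{height-vs-CC}, \ref{height-vs-word}), the parity lemma, and then a case analysis on three-sided, four-sided, and unstable geodesic combinatorics with explicit integer-path constructions (rounding half-integer corners of four-sided geodesics, and swapping $\E_1\E_2\mapsto\E_2\E_1$ to shave balayage area one unit at a time). Citing Blach\`ere's exact word-length formula as an external substitute would be legitimate, but you never extract anything from it---and what you assert it yields contradicts it. Finally, your unstable case is mishandled: the side panels subtend positive volume ($\Vuns=1/6$ for $\ell^1$, against $\VS=31/72$), so $\sigma\subset\Suns$ contributes a genuine $\Theta(n^3)$ main term rather than something ``contributing $O(n^2)$,'' and it too needs a word-metric statement (that $|(x,y,z)|_\std=x+y=\dCC((x,y,z),\zero)$ on the unstable cone, via the area-shaving argument), which ``handled as there''---a reference to the lattice-point count of Theorem~\ref{countingthm}, not to any word-length fact---does not provide.
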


To prove this, we will study the distribution of points in $S_n$, 
finding that the set $S_n$ has points on the line $\{(x,y,t): t\ge 0\}$  if and only if $x,y$ are integers and
$x+y+n$ is even; in that case it contains all lattice points between $\delta_n(\eS)$ and $\delta_{n-2}(\eS)$.
This is illustrated in  Figure~\ref{cross-sec}.

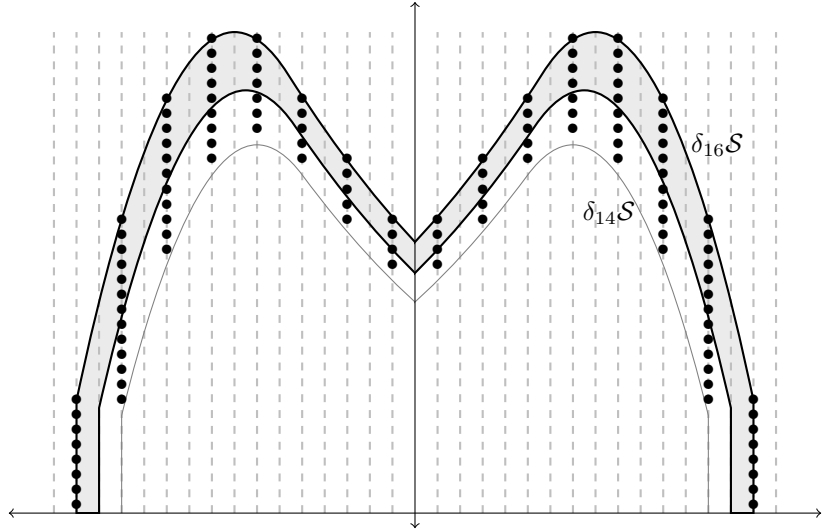
\begin{figure}[ht]
\begin{tikzpicture}[scale=.2]
\begin{scope}[xscale=1.5]
\foreach \flip in {-1,1}
{\begin{scope}[xscale=\flip]
\clip (0,0) rectangle (18,34);
\foreach \x in {1,...,16}
{\draw [thick,gray!50,dashed] (\x,0)--(\x,32);}
\draw [gray]  (-11,13/2) parabola (5,45/2) parabola bend (7,49/2) (13,13/2)--(13,0); 
\draw [thick,fill=gray!50,fill opacity=.3] (-12,7) parabola (16/3,8*29/9) parabola bend (15/2,225/8) (14,7)--(14,0)--
(15,0)--(15,15/2) parabola bend (8,32) (17/3,17*31/18) parabola bend (-13,15/2) (-13,1/4)--cycle;
\foreach \y in {16,...,19}
{\node at (1,\y+.5) {$\bullet$};}
\foreach \y in {19,...,23}
{\node at (3,\y+.5) {$\bullet$};}
\foreach \y in {23,...,27}
{\node at (5,\y+.5) {$\bullet$};}
\foreach \y in {25,...,31}
{\node at (7,\y+.5) {$\bullet$};}
\foreach \y in {23,...,31}
{\node at (9,\y+.5) {$\bullet$};}
\foreach \y in {17,...,27}
{\node at (11,\y+.5) {$\bullet$};}
\foreach \y in {7,...,19}
{\node at (13,\y+.5) {$\bullet$};}
\foreach \y in {0,...,7}
{\node at (15,\y+.5) {$\bullet$};}
\end{scope}
}
\draw [<->] (0,-1)--(0,34); \draw [<->] (-18,0)--(18,0);
%\draw [->] (13,24) node [above] {$\Delta_{16}S$} -- (12,22.2);
\node at (13.4,24.5) {$\delta_{16}\eS$};
\node at (8.6,20) {$\delta_{14}\eS$};
\end{scope}
\end{tikzpicture}
\caption{This figure depicts a cross-section of $H(\R)$, with the points of the sphere $S_{16}$
in $(H(\Z),\std)$ marked and three dilates of the \CC sphere shown for comparison.  
As will be proven in this section,  $S_n$ has very nearly as many points 
as the lattice points contained in $\Delta_n\eS$ (the shaded region):  it has twice the points on half the lines.
\label{cross-sec}}
\end{figure}

\begin{lemma}[\CC distance versus height]\label{height-vs-CC}
For any polygonal \CC metric,  distance from the origin is an increasing function of height:
regarded as a function of $t\ge 0$, the value $\dCC\bigl((x,y,t),\zero\bigr)$ is continuous, 
stays constant on an interval $[0,T(x,y)]$, and is strictly increasing thereafter.  
\end{lemma}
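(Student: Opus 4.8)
The plan is to pass from the Heisenberg group back to the planar isoarea problem of Section~\ref{isoper}. Fix $(x,y)$ and set $r=\normL{(x,y)}$. By Lemma~\ref{balayage}, an admissible path from $\zero$ to $(x,y,t)$ projects to a planar path from $(0,0)$ to $(x,y)$ of balayage area $t$, with \CC-length equal to the $L$-length of the projection, and conversely every such planar path lifts. Hence $\dCC\bigl((x,y,t),\zero\bigr)$ is precisely the least $L$-length of a path in $\M$ from $(0,0)$ to $(x,y)$ enclosing signed area $t$. I would then introduce $\Phi(\rho)$, the maximal balayage area among paths of $L$-length $\rho$ from $(0,0)$ to $(x,y)$; this is defined for $\rho\ge r$, since no path reaching $(x,y)$ is shorter than the straight chord. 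Applying the dilation $\delta_{1/\rho}$, which scales $L$-length by $1/\rho$ and planar area by $1/\rho^2$, reduces length-$\rho$ paths to length-$1$ paths, giving $\Phi(\rho)=\rho^2 A(x/\rho,y/\rho)$, where $A$ is the balayage function. In particular $\Phi$ is continuous by the continuity of $A$ (Theorem~\ref{quadratic}), and since length-$\rho$ paths realize every area in $[0,\Phi(\rho)]$ (deform the area-maximizing trace path continuously to a zero-area path at fixed length), one obtains $\dCC\bigl((x,y,t),\zero\bigr)=\min\{\rho\ge r:\Phi(\rho)\ge t\}$.

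Next I would identify the constant interval by setting $T(x,y):=\Phi(r)=r^2A\bigl((x,y)/r\bigr)$. Since $(x,y)/r\in L$, the existence-of-beelines lemma yields beelines to $(x,y)/r$ enclosing every signed area in $[-A,A]$ with $A=A((x,y)/r)$; scaling by $\delta_r$ produces paths to $(x,y)$ of $L$-length exactly $r$ enclosing every area in $[0,T(x,y)]$. As $r$ is the minimal possible length of any path reaching $(x,y)$, this forces $\dCC\bigl((x,y,t),\zero\bigr)=r$ for all $t\in[0,T(x,y)]$, which is the claimed constant part. For $t>T(x,y)$ the formula above reads $\dCC\bigl((x,y,t),\zero\bigr)=\Phi^{-1}(t)$, so the entire statement reduces to showing that $\Phi$ is a strictly increasing homeomorphism of $[r,\infty)$ onto $[T(x,y),\infty)$.

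The main obstacle is the strict monotonicity of $\Phi$. That $\Phi$ is nondecreasing is clear: any length-$\rho$ path may be viewed as a length-$\rho'$ path for $\rho'\ge\rho$ by appending a zero-area out-and-back detour. For strictness, fix $\rho\ge r$ and $\Delta>0$, let $\gamma$ be an optimal path of length $\rho$ enclosing area $\Phi(\rho)$, choose a straight subsegment $[P,P']$ of $\gamma$, and replace it by a two-segment detour $P\to M\to P'$ with $M$ displaced to the side of $\gamma$ away from the enclosed region. This fixes the endpoints $(0,0)$ and $(x,y)$, adds the positive area of the triangle $PMP'$, and adds $L$-length $\normL{M-P}+\normL{P'-M}-\normL{P'-P}>0$; both the added length and the added area grow continuously from $0$ to $\infty$ as $M$ recedes, so $M$ can be chosen with added length exactly $\Delta$, whence $\Phi(\rho+\Delta)>\Phi(\rho)$. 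Since the classical isoperimetric bound forces $\Phi(\rho)\to\infty$, $\Phi$ is a strictly increasing continuous bijection onto $[T(x,y),\infty)$, so $\Phi^{-1}$ is continuous and strictly increasing and glues continuously to the constant value $r$ at $t=T(x,y)$. The degenerate central-axis case $(x,y)=(0,0)$ (where $T=0$) is immediate from $\dCC\bigl((0,0,t),\zero\bigr)=\sqrt t\,\dCC\bigl((0,0,1),\zero\bigr)$, which is continuous and strictly increasing for $t\ge0$.
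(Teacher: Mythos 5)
Your proposal is correct, but it takes a genuinely different route from the paper. The paper's proof is a short top-down argument from the Structure Theorem: the distance of $\X=(x,y,t)$ is the unique $d$ with $\delta_{1/d}(\X)\in\eS$; for $\X$ in the dilation cone $\Delta\Suns$ of the vertical side panels this $d$ depends only on $(x,y)$ (giving the constant interval $[0,T(x,y)]$, since $\Delta\Suns$ is closed and meets each vertical line in such an interval), and beyond that range one observes that raising the point pushes its rescaled image strictly above the graph of the balayage function, i.e.\ outside the unit ball, so the distance strictly increases. You instead work from the planar side: you encode everything in the profile $\Phi(\rho)=\rho^2 A(x/\rho,y/\rho)$, identify $\dCC\bigl((x,y,t),\zero\bigr)$ with $\min\{\rho\ge r:\Phi(\rho)\ge t\}$, get the constant part from the beelines lemma, and prove strict monotonicity of $\Phi$ by an explicit outward-bump perturbation of an optimal (trace) path. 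What the paper's argument buys is brevity: strict monotonicity comes for free from the fact that $\eS$ is a single-sheeted graph over $Q^\circ$ bounding the ball, with no perturbation needed. What your argument buys is the explicit inverse-function description $t=d^2A(x/d,y/d)$ of the height--distance relation (which is exactly the quantitative form used later, e.g.\ $H_n(x,y)=n^2A(x/n,y/n)$ in the counting section), and it avoids invoking that ``above the graph'' means ``outside the ball.'' Two cosmetic caveats: your claim that the detour $P\to M\to P'$ always adds \emph{strictly} positive length is not automatic in a polygonal norm, where the triangle inequality can be an equality for non-collinear points; but this strictness is never needed---continuity of the added length from $0$ to $\infty$ plus the intermediate value theorem lets you hit length exactly $\Delta>0$, and any such $M$ lies off the line so the added area is positive. (It does in fact hold here, since straight subsegments of trace paths point in vertex directions of $L$ and the two detour directions straddle that vertex.) Likewise, the existence of a straight subsegment of the optimal path uses that maximizers are trace paths, hence piecewise linear for polygonal $L$---worth saying explicitly, since you rely on Lemma~\ref{dido} there.
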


\begin{proof}Let $\X_t=(x,y,t)$.
The distance of a point $\X$ from $\zero$ is the value $d$ such $\delta_{1/d}(\X)\in\eS$.
If $\X_t\in \Delta \Suns$, then the dilate of $\X_t$ that hits $\eS$ will intersect a side panel, so 
the distance of $\X_t=(x,y,t)$ to the origin depends only on $x,y$.
%But $\X\in\Delta \Suns$ precisely when $\Foot(\X)\in L$.  
Now $\Delta\Suns$ is a closed set with interior which contains
$(x,y,0)$ for every $(x,y)\in\M$.
%all of $\M$, because $L\subset \Suns$ and  $\Delta L=\M$.
Thus $(x,y,t)\in \Delta\Suns$ for a closed interval of times $[0,T(x,y)]$.  

Now consider $\X_t$ and $\X_s$ with $s>t$.  These have the same projection to $\M$, and 
so do $\delta_r(\X_t)$ and $\delta_r(\X_s)$ for any $r$.  Thus if $\delta_{1/d}(\X_t)\in\eS$,
we must have $\delta_{1/d}(\X_s)$ vertically above $\eS$, meaning that 
$\dCC(\X_s,\zero)>d$.
\end{proof}

Note also that if $\alpha$ is a path in a Cayley graph for $H(\Z)$
(i.e., a word written in terms of the  generators), then
there is a corresponding admissible path in $H(\R)$
which ends at the same point and whose \CC length is the same
as the word-length in that word metric.
(See the proof of Lemma~\ref{geodesic spellings vs admissible paths}
for the construction, noting that the extra height created by generators is zero for $S=\std$, 
or whenever the generators lie in $\M$.)
That is, word paths are realizable as admissible paths in an obvious way, as in Figure~\ref{403}.

\begin{figure}[ht]
\begin{tikzpicture}
\filldraw (0,0) circle (.05); \filldraw (4,0) circle (.05);
\draw [dashed] (0,0) -- (4,0);
\draw [very thick] (0,0)--(0,-3/4)--(4,-3/4)--(4,0);
\draw [gray!50] (0,0) rectangle (4,-1);
\foreach \x in {1,2,3}
{\draw [gray!50] (\x,0)--(\x,-1);}
\begin{scope}[xshift=6cm]
\filldraw (0,0) circle (.05); \filldraw (4,0) circle (.05);
\draw [dashed] (0,0) -- (4,0);
\draw [very thick] (0,0)--(0,-1)--(3,-1)--(3,0)--(4,0);
\draw [gray!50] (0,0) rectangle (4,-1);
\foreach \x in {1,2,3}
{\draw [gray!50] (\x,0)--(\x,-1);}
\end{scope}
\end{tikzpicture}
\caption{Comparing geodesics:  $\X=(4,0,3)$ is reached by a three-sided \CC geodesic 
of length $5 \nicefrac 12$, while its word-length in the standard generators is $6$. \label{403}}
\end{figure}
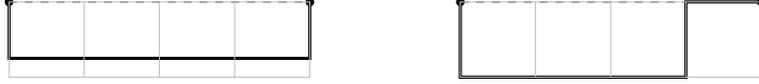

However, the \CC geodesic in $H(\R)$ between two points in $H(\Z)$ is often shorter than the
geodesic in the word metric, as seen in the figure: the \CC geodesics need not have corners at 
integer points, even if they begin and end at integer points.

\begin{lemma}[Word length versus height]
\label{height-vs-word}
For any  $(x,y)\in\Z^2$, let  $\epsilon=\epsilon(x,y)=1/2$ if $x,y$ odd and 
$\epsilon=0$ otherwise, so that $(x,y,\epsilon+m)\in H(\Z)$ for all $m\in\Z$.
Then $|(x,y,\epsilon+m)|_\std$
 is nondecreasing for $m=0,1,2,\ldots$.
\end{lemma}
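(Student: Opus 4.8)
The plan is to translate the statement entirely into the combinatorics of lattice paths in $\M$ and the signed (balayage) area they enclose, and then to exhibit an area-decreasing move that never increases length. First I would record the dictionary coming from equation~\eqref{dagger}: since the standard generators $\E_1,\E_2$ have zero height, a word $w=\A_1\cdots\A_n$ represents $(x,y,z)$ exactly when its projection is an $n$-step lattice path from $(0,0)$ to $(x,y)$ whose balayage area equals $z$. Thus $|(x,y,\epsilon+m)|_\std$ is precisely the minimal number of unit steps in a lattice path from $(0,0)$ to $(x,y)$ enclosing signed area $\epsilon+m$, and the lemma becomes the claim that this minimal step count does not decrease as the target area runs through $\epsilon,\epsilon+1,\epsilon+2,\dots$.

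The key computational fact I would establish is the signed-area formula $a=\tfrac12\sum_{i<j}d_i\times d_j$ for a path with step vectors $d_1,\dots,d_n$ (the chord contributions drop out because $\sum_i d_i=(x,y)$). From this, swapping two adjacent steps $d_i,d_{i+1}$ changes the area by exactly $-(d_i\times d_{i+1})\in\{0,\pm1\}$; in particular replacing a consecutive pair $\E_1\E_2$ by $\E_2\E_1$ lowers the area by $1$ while preserving both the length and the endpoint. Since the symmetric group is generated by adjacent transpositions, every reordering of a fixed step-multiset is reachable by such swaps, each altering the area by at most $1$. By a discrete intermediate-value argument, the areas realized by a fixed multiset (with fixed endpoint) then fill out every value of $\epsilon+\Z$ between their minimum $a_{\min}$ and their maximum $a_{\max}$.

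The crucial structural input is a symmetry forcing $a_{\min}\le 0$: reversing the order of the steps to $d_n,\dots,d_1$ produces another lattice path with the same multiset and the same endpoint, but negates the area (each $d_i\times d_j$ with $i<j$ becomes $d_j\times d_i=-d_i\times d_j$). Hence the area spectrum of any multiset is symmetric about $0$, so $a_{\min}=-a_{\max}\le 0$. I would then finish as follows: take a geodesic word of length $n=|(x,y,\epsilon+m+1)|_\std$ realizing area $\epsilon+m+1$, with step-multiset $M$. On one hand $\epsilon+m+1\le a_{\max}(M)$; on the other, because $m\ge 0$ we have $\epsilon+m\ge 0\ge a_{\min}(M)$. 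Thus $\epsilon+m$ lies in $[a_{\min}(M),a_{\max}(M)]$ and, being an element of $\epsilon+\Z$, is realized by some reordering of $M$, that is, by a path of the same length $n$. Therefore $|(x,y,\epsilon+m)|_\std\le n=|(x,y,\epsilon+m+1)|_\std$, which is the asserted monotonicity.

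The main obstacle to watch is exactly this last step: a priori the geodesic multiset $M$ for height $\epsilon+m+1$ need not admit a reordering of area $\epsilon+m$ if that target fell below $a_{\min}(M)$, in which case one would be forced to lengthen the word. This is precisely where the reversal symmetry $a_{\min}\le 0$ together with the hypothesis $m\ge 0$ are indispensable—they guarantee that the target area sits inside the attainable band for the \emph{same} length. Running the identical argument downward shows the length is nonincreasing for $m=0,-1,-2,\dots$, so $\epsilon$ is genuinely the length-minimizing height over its vertical line, matching the continuous picture of Lemma~\ref{height-vs-CC}.
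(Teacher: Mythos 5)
Your proof is correct, and it rests on the same elementary move as the paper's own argument --- replacing a consecutive $\E_1\E_2$ by $\E_2\E_1$ changes the balayage area by exactly one while preserving length and endpoint --- but you deploy that move in a genuinely different way. The paper argues locally: given a geodesic word to $(x,y,z)$ with $z>0$, it asserts that the word must contain some subword $\E_1\E_2$ (``or else the signed area would be nonpositive''), performs one swap, and concludes $|(x,y,z-1)|\le|(x,y,z)|$. You argue globally: you consider the full spectrum of areas realized by reorderings of the geodesic's step multiset, show via path reversal that this spectrum is symmetric about zero (reversal negates $\frac12\sum_{i<j}d_i\times d_j$), and then use a discrete intermediate-value argument over adjacent transpositions to hit the target area $\epsilon+m$ exactly. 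What your route buys: it never needs to locate a decreasing swap inside the given word, needs no first-quadrant reduction, and works uniformly when the geodesic uses all four generators $\pm\E_1,\pm\E_2$ --- which is precisely the situation where the paper's one-line positivity claim is least obvious; it also yields slightly more, namely that every height in $[-a_{\max},a_{\max}]\cap(\epsilon+\Z)$ is reachable at the same word length, and the mirror monotonicity for $m\le 0$. What the paper's route buys is brevity: one swap suffices, with no apparatus of spectra or permutations. One small correction to your write-up: the chord's contribution to the shoelace computation vanishes because the chord lies on a line through the origin (as in the proof of Lemma~\ref{balayage}), not merely because $\sum_i d_i=(x,y)$; the formula $a=\frac12\sum_{i<j}d_i\times d_j$ itself is correct.
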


\begin{proof}Suppose without loss of generality that $(x,y)$ is in the first quadrant of $\M$.
Let  $g$ be a geodesic in $(H(\Z),\std)$ from the origin to a point $\X=(x,y,z)$ where $z>0$.  
Then
$z$ is equal to the signed area enclosed between $g$ and the straight chord from $\zero$ to $\pi(\X)$.
The path contains only $\E_1$ moves and $\E_2$ moves, and there must be some subword $\E_1\E_2$ or else the signed 
area would be nonpositive.  But then let $g'$ be the same path with some $\E_1\E_2$ subword replaced by 
$\E_2\E_1$.  This has the same length and its balayage area is one less, so it is a path in $H(\Z)$ from 
$\zero$ to $(x,y,z-1)$.  This may no longer
be geodesic, but its length provides an upper bound:  $|(x,y,z-1)| \le |(x,y,z)|$.
\end{proof}

Next, it is quite easy to see that there is a parity condition on lengths of paths from the origin to $(x,y,z)$.  
\begin{lemma}[Parity] \label{parity}
For any path of length $n$ in $H(\Z)$ from $0$ to $(x,y,z)$, $n$ has the same parity as $x+y$.
\end{lemma}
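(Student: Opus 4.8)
The plan is to track the parity of $x+y$ one step at a time along the word, exploiting that the central coordinate $z$ is irrelevant to this count. The standard generators are $\std=\pm\{\E_1,\E_2\}$, so in exponential coordinates every letter is one of $(\pm 1,0,0)$ or $(0,\pm 1,0)$. The essential observation is that right-multiplication by any such letter changes exactly one of the $x$- and $y$-coordinates by $\pm 1$ and leaves the other fixed: this is immediate from the group law $(x,y,z)(x',y',z')=(x+x',\,y+y',\,z+z'+\frac{xy'-yx'}{2})$, since the first two coordinates simply add. Consequently each step of the path changes the integer $x+y$ by exactly $\pm 1$, and in particular flips its parity.

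The proof then closes in one line. Starting from the origin, where $x+y=0$ is even, after $n$ steps the parity of $x+y$ has been flipped $n$ times, so $x+y\equiv n \pmod 2$, which is the claim. Equivalently, and perhaps more cleanly, I would package this as a homomorphism: the projection $\pi\colon H(\Z)\to\Z^2$ is a group homomorphism onto $(\Z^2,+)$ because the first two coordinates add under the Heisenberg product, and composing with $(x,y)\mapsto x+y$ and reduction mod $2$ yields a homomorphism $\phi\colon H(\Z)\to\Z/2\Z$. Each of the four generators maps to $1$ under $\phi$, so any word of length $n$ maps to $n\bmod 2$; since that word represents $(x,y,z)$, it also maps to $x+y \bmod 2$, giving $n\equiv x+y\pmod 2$.

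There is essentially no obstacle here: the only thing to verify is the elementary fact that a single generator alters $x+y$ by a unit, which rests solely on the additivity of the horizontal coordinates and makes no use of $z$. I would also remark that the same argument shows the parity of $x+y$ to be a well-defined invariant of the group element, independent of the chosen spelling, which is exactly the form in which it will be applied when counting lattice points along vertical lines in the proof of Theorem~\ref{std}.
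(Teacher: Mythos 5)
Your proof is correct and follows essentially the same route as the paper's: both arguments reduce to the fact that the horizontal coordinates add under the group law, so the parity of $x+y$ tracks the parity of the word length (the paper packages this as the identity $r+|r|\in 2\Z$ applied to blocks $\E_1^{a_i}\E_2^{b_i}$, while you phrase it as a step-by-step parity flip, or equivalently a homomorphism $H(\Z)\to\Z/2\Z$). The homomorphism formulation is a clean way to state the same observation, but it is not a genuinely different argument.
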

\begin{proof}
All paths in $(H(\Z),\std)$ are of the form
$\Pi \E_1^{a_i}\E_2^{b_i}$, with $\sum a_i=x$, $\sum b_i=y$, and  length $n=\sum |a_i|+|b_i|$.
Since $r+|r|$ is even for all integers, it follows that $x+y+n\in 2\Z$.

Geometrically, this is just the observation that if a path in $\M$
to $(x,y)$ goes horizontally past $x$ or vertically past $y$, it
will have to backtrack by the same number of steps.
\end{proof}

For fixed $(x,y)\in\Z^2$ and $n\in\N$, let us count the points $(x,y,z)\in S_n$.
We first note that $\pi(\delta_n\eS)=nQ$, so there are no points of $S_n$ over $(x,y)$ if $\normL{(x,y)}>n$.

\begin{Thm}[Word length in terms of \CC]\label{word-length-formula} Fix $(x,y)\in \Z^2$.  
For $z\ge 0$ in  $\Z+\epsilon$, let $n$ be the unique integer with the same parity as $x+y$
such that $n-2<\dCC((x,y,z),\zero)\le n$.  Then 
$$|(x,y,z)|_\std=n.$$
\end{Thm}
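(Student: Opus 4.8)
The plan is to convert the word-length computation into a planar problem and then to pin $|(x,y,z)|_\std$ to $n$ from both sides. Since $\std$ lies entirely in $\M$, the construction in the proof of Lemma~\ref{spellings} realizes every word of length $\ell$ as an admissible path of \CC length $\ell$ with the same endpoints, and by Lemma~\ref{balayage} the height $z$ of the endpoint equals the balayage area of the corresponding planar path. So a word of length $\ell$ from $\zero$ to $(x,y,z)$ is the \emph{same datum} as a lattice path in $\M$ of $\ell^1$-length $\ell$ from $(0,0)$ to $(x,y)$ whose signed area with the chord is $z$. The theorem thus asserts that the least such $\ell$ is exactly the prescribed $n$.

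For the lower bound, realizability gives $\dCC((x,y,z),\zero)\le |(x,y,z)|_\std$, so $|(x,y,z)|_\std>n-2$. By Lemma~\ref{parity} the word length is congruent to $x+y$, hence to $n$, modulo $2$; an integer strictly greater than $n-2$ and congruent to $n$ is at least $n$, so $|(x,y,z)|_\std\ge n$.

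For the upper bound I must produce a length-$n$ lattice path of balayage area exactly $z$. First note that $\dCC((x,y,z),\zero)\le n$ is equivalent to $z\le H_n(x,y)$, the height of $\delta_n\eS$ over $(x,y)$, which by the Structure Theorem (Thm~\ref{quadratic}) is the maximal balayage area of a \emph{real} path of $\ell^1$-length $n$ to $(x,y)$. Let $A^{\ast}$ be the maximal balayage area over \emph{lattice} paths of length $n$ from $(0,0)$ to $(x,y)$; such paths exist because $n\ge |x|+|y|$ and $n\equiv x+y$, so one may pad a monotone staircase with out-and-back steps. Reflecting a path across the chord negates its signed area and preserves length, so the attainable areas are symmetric about $0$; and, exactly as in Lemma~\ref{height-vs-word}, swapping a subword $\E_1\E_2\leftrightarrow\E_2\E_1$ (or its signed variants) alters the area by $\pm1$ while fixing length and endpoints. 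A short argument with these swaps shows that the attainable areas fill the entire progression $\{-A^{\ast},\,-A^{\ast}+1,\ldots,A^{\ast}\}$ inside $\Z+\epsilon$. Granting $A^{\ast}\ge z$, the target $z$ satisfies $-A^{\ast}\le 0\le z\le A^{\ast}$ and lies in $\Z+\epsilon$, so it is attained, yielding a word of length $n$ and hence $|(x,y,z)|_\std\le n$.

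The crux is the inequality $A^{\ast}\ge z$: it says the lattice constraint does not depress the maximal balayage area below any admissible target $z\le H_n(x,y)$ in $\Z+\epsilon$. This is precisely where the special geometry of the $\ell^1$ norm enters. Its isoperimetrix is an axis-parallel square, so the area-maximizing trace paths of Lemma~\ref{dido} run in the coordinate directions and, for integer endpoints and integer length $n$, can be taken to have integer corners. Consequently the lattice maximum $A^{\ast}$ agrees with the largest element of $\Z+\epsilon$ not exceeding $H_n(x,y)$, which is $\ge z$. Verifying this lattice-realizability of the optimal trace path --- equivalently, that the discrete Dido problem in $\ell^1$ shares the optimum of the continuous one up to the forced $\epsilon$-rounding --- is the step demanding care; the remaining symmetry-and-swap bookkeeping is routine.
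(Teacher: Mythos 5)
Your overall skeleton is close to the paper's: the lower bound ($\dCC\le|\cdot|_\std$ via realizability, plus the parity Lemma~\ref{parity}) is exactly the paper's argument, and your ``fill the progression by swaps'' step is just Lemma~\ref{height-vs-word} applied iteratively, which is fine for $z\ge0$. The genuine gap is at the step you yourself flag as the crux, and the justification you sketch for it is false. You claim that, because the isoperimetrix of $\ell^1$ is an axis-parallel square, the area-maximizing path of length $n$ to an integer endpoint ``can be taken to have integer corners.'' By Lemma~\ref{dido} the maximizer with prescribed endpoints and length is \emph{unique}, so there is nothing to ``take'': in the four-sided case its corner parameter is $t=(n-3x+y)/4$, and the parity hypothesis $n\equiv x+y \pmod 2$ only forces $t\in\frac12\Z$. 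When $n-3x+y\equiv 2\pmod 4$ the unique optimum has half-integer corners, and \emph{no} lattice path of length $n$ attains $H_n(x,y)$. Concretely, take $(x,y)=(0,0)$ and $n=6$: the unique maximizer is the loop $\E_2^{-3/2}\E_1^{3/2}\E_2^{3/2}\E_1^{-3/2}$ with area $H_6(0,0)=9/4$, while every lattice loop of length $6$ encloses area at most $2$. So the lattice optimum $A^{\ast}$ can sit strictly below $H_n$, and your stated reason for $A^{\ast}\ge z$ evaporates.

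The weaker statement you actually need --- that $A^{\ast}$ equals the largest element of $\Z+\epsilon$ not exceeding $H_n(x,y)$ --- is true, but proving it is precisely the content of the paper's proof, not a routine verification. In the half-integer case the paper rounds the optimal corners in opposite directions, producing the lattice path $\E_2^{y-x-u}\E_1^{x+s}\E_2^{x+u}\E_1^{-s}$ with $s=t-\frac12$, $u=t+\frac12$, which still has length $n$ and encloses area exactly $H_n-\frac14$; it then uses the explicit form $H_n=(x+t)^2-\frac12xy$ to check that no element of $\Z+\epsilon$ lies in the interval $\bigl(H_n-\frac14,\,H_n\bigr]$, so this rounded path realizes the top admissible lattice height. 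Without this quarter-unit bookkeeping you cannot exclude that $A^{\ast}$ falls a full unit or more below $H_n$, in which case lattice heights $z$ with $A^{\ast}<z\le H_n$ would admit no word of length $n$ in your scheme --- i.e., the upper bound would fail exactly where the theorem has content. (Two smaller points: the defect is confined to four-sided combinatorics, since in the three-sided case $t=(n-x-y)/2\in\Z$ by parity and in the unstable case two-sided lattice geodesics exist, so the argument should be organized by combinatorial type, as the paper's is; and your appeal to reflection symmetry of attainable areas is unnecessary once you restrict to $z\ge0$.)
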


Stated another way, this formula says that for all $\X\in H(\Z)$, 
$$\dCC(\X,\zero)\le |\X|_\std  < \dCC(\X,\zero) +2 ,$$
and describes which of the two integers in that range is the correct value.

\begin{proof}  We will restrict ourselves to the half-space $z\ge 0$ without loss of generality;
the situation on the other half-space is obtained by reflection.
The point $(x,y,\epsilon +m)$ lies in the unstable part of $H(\R)$
for $0\le m\le T$, then in a sector with three-sided combinatorics for some 
$T\le m\le T'$, and finally in a sector with four-sided combinatorics for $m\ge T'$.
(See Figure~\ref{combx}.)
We consider those three cases separately.

Fix  $n\ge 0$ of the same parity as $x+y$.
By Lemma~\ref{height-vs-CC}, there is a unique real value $r\ge 0$ such that $(x,y,r)$ is reached by a 
\CC geodesic of length $n$.  Take that $r$ and suppose that the geodesic $\gamma$ is 
three-sided.  Note that $\pi(\gamma)$ is a path in $\M$ moving only horizontally and vertically;
without loss of generality, 
$\gamma$ has the form $\E_2^{-t}\E_1^x\E_2^{y+t}$, where $t>0$ and $x+2t+y=n$.
But we know $x+y$ has the same parity as $n$, so it follows that
$t$ is an integer.  Thus $\gamma$ is the realization of
a word geodesic, so $|(x,y,r)|=\dCC((x,y,r),\zero)$.

Now assume that $\gamma$ is a 4-sided geodesic from the origin to $(x,y,r)$, with length $n$.
Without loss of generality, $\gamma$ has the form $\E_2^{y-x-t}\E_1^{x+t}\E_2^{x+t}\E_1^{-t}$, where $t\ge 0$ and 
$3x-y+4t=n$.  Since $x+y+n$ is even, it follows that $4t$ is an integer with the same parity as $4x$, so $t$ is either an integer
or a half-integer.  If $t$ is an integer, then $\gamma$ is the realization
of a word geodesic and $|(x,y,r)|=\dCC((x,y,r),\zero)$.  If not, then let $s=t-1/2$ and $u=t+1/2$
be the nearest integers.
We can form an integer path 
$\E_2^{y-x-u}\E_1^{x+s}\E_2^{x+u}\E_1^{-s}$ of length $n$ by shortening the horizontal sides and lengthening the vertical sides by $1/2$.  
This encloses area $z=(x+s)(x+u)-\frac 12 xy$, which is in $\epsilon+\Z$, so
the endpoint $(x,y,z)$ of the new path is in $H(\Z)$.  This area is smaller  than $r=(x+t)^2-\frac 12 xy$ by $1/4$, so it must be the nearest 
lattice point.  This new path may not be a geodesic, but it establishes the inequality $|(x,y,z)|\le n$.
On the other hand, shortening all four sides of $\gamma$ by $1/4$ produces a $4$-sided \CC geodesic to $(x,y,r')$ of length $n-1$,
and one easily checks that $r'<z<r$.  This shows that
$\dCC((x,y,z),\zero)>n-1$, and it follows that $|(x,y,z)|=n$.  

In these cases, we have established 
that the highest lattice point over $(x,y)$ inside the closed \CC ball of radius $n$ has word-length
exactly $n$.  Now we want to show that the lowest lattice point over $(x,y)$ outside the closed \CC ball of radius $n-2$ also has
word-length exactly $n$.  But this point has \CC norm larger than $n-2$, so it has word-length larger than $n-2$, and by parity considerations
the word-length must be exactly $n$.

The last case is easy:  for any unstable point 
$(x,y,z)\in H(\R)$, its \CC norm is $\dCC((x,y,z),\zero)=x+y$.  We claim that these points
satisfy $|(x,y,z)|_\std=x+y$ as well.
To see this, just note that starting with the two-sided path $\E_1^x\E_2^y$, which encloses
area $\frac 12 xy$ with length $x+y$, one can shave off area one unit at a time with generator swaps 
(replacing some occurrence of $\E_1\E_2$ with $\E_2\E_1$)
until the area enclosed is $z$.
This produces a word path whose length matches the \CC geodesic, so it too is geodesic.
\end{proof}

Thus far, we have shown that if $\sigma$ is a subset of $\Sreg$,
then
$|S_n\cap \Delta\sigma|\sim |H(\Z) \cap \Delta_n\sigma|$, because $S_n$ has twice as many points 
as $\Delta_n\sigma$, but on half of the lines (see Figure~\ref{cross-sec}). 
On the other hand, 
$|S_n\cap \Delta\sigma|=|H(\Z) \cap \Delta_n\sigma|$ exactly for $\sigma\subset\Suns$.
Putting these together and then using the Counting Theorem (Thm~\ref{countingthm}) 
to estimate the number of lattice points 
in $\Delta_n\sigma$, we obtain Theorem~\ref{std}.


\begin{thebibliography}{99}

\bibitem{blachere} S.\ Blach\`ere, Word distance on the discrete Heisenberg group.
Colloquium Mathematicum 95, vol. 1, 2003, 21--36.

\bibitem{breuillard} E.\ Breuillard, Geometry of groups of polynomial growth and shape of large balls.\\
{\sf arXiv:0704.0095}

\bibitem{burago} D.Yu.\ Burago, Periodic metrics.  
Representation theory and dynamical systems,  205--210, 
{\em Adv.\ Soviet Math.}, 9, Amer. Math. Soc., Providence, RI, 1992.

\bibitem{busemann} H.\ Busemann, The isoperimetric problem in the {M}inkowski plane.
AJM 69 (1947), 863--871.

\bibitem{capogna} L.\ Capogna, D.\ Danielli, S.\ Pauls and J.\ Tyson,
{\em An Introduction to the Heisenberg Group and to the Sub-Riemannian Isoperimetric Problem}. 
Birkhauser, Progress in Mathematics, 2007.

\bibitem{dani} P.\ Dani, 
The asymptotic density of finite-order elements
in virtually nilpotent groups.
Journal of Algebra 316 (2007) 54--78.

\bibitem{DLM} M.\ Duchin, S.\ Leli\`evre, and  C.\ Mooney,
The shape of spheres in free abelian groups, preprint.

\bibitem{krat} S.A.\ Krat, Asymptotic properties of the Heisenberg group.
Journal of Mathematical Sciences, Vol. 110, No. 4 (2002) 2824--2840.

\bibitem{pansu} P.\ Pansu, Croissance des boules et des g\'eod\'esiques ferm\'ees dans les nilvari\'et\'es.
 Ergodic Theory Dynam. Systems  3  (1983),  no. 3, 415--445. 

\bibitem{shapiro} M.\ Shapiro, A geometric approach to the almost convexity and growth of some nilpotent groups.  Math. Ann. 285, 601--624 (1989).

\bibitem{stoll1} M.\ Stoll,
Rational and transcendental growth series for the higher Heisenberg groups.
Invent. math. 126, 85Ð109 (1996).

\bibitem{stoll2} M.\ Stoll,
On the asymptotics of the growth of 2-step nilpotent groups.
J. London Math. Soc. (2) 58 (1998) 38--48.

\end{thebibliography}
\end{document}